\newcommand{\MZ}{\mathbb{Z}}
\newcommand{\BR}{\mathbb{R}}
\newcommand{\SL}{\sum\limits}
\newcommand{\al}{\alpha}
\newcommand{\be}{\beta}
\newcommand{\CF}{\mathcal F}
\newcommand{\MP}{\mathbf P}
\newcommand{\CS}{\mathcal S}
\newcommand{\CW}{\mathcal W}
\newcommand{\Oa}{\Omega}
\newcommand{\si}{\sigma}
\newcommand{\pa}{\partial}
\renewcommand{\phi}{\varphi}
\newcommand{\eps}{\varepsilon}
\newcommand{\Ra}{\Rightarrow}
\newcommand{\ol}{\overline}
\newcommand{\CM}{\mathcal M}
\newcommand{\norm}[1]{\lVert#1\rVert}
\renewcommand{\comment}[1]{}
\newcommand{\md}{\mathrm{d}}
\DeclareMathOperator{\SRBM}{SRBM}
\DeclareMathOperator{\const}{const}
\DeclareMathOperator{\SP}{SP}
\DeclareMathOperator{\CP}{CP}
\begin{document}

\theoremstyle{plain}
\newtheorem{thm}{Theorem}[section]
\newtheorem*{thmnonumber}{Theorem}
\newtheorem{lemma}[thm]{Lemma}
\newtheorem{prop}[thm]{Proposition}
\newtheorem{cor}[thm]{Corollary}
\newtheorem{open}[thm]{Open Problem}

\theoremstyle{definition}
\newtheorem{defn}{Definition}
\newtheorem{asmp}{Assumption}
\newtheorem{notn}{Notation}
\newtheorem{prb}{Problem}

\theoremstyle{remark}
\newtheorem{rmk}{Remark}
\newtheorem{exm}{Example}
\newtheorem{clm}{Claim}

\author{Andrey Sarantsev}

\title[Comparison Techniques for Competing Brownian Particles]{Comparison Techniques for Competing Brownian Particles} 

\address{Department of Statistics and Applied Probability, University of California, Santa Barbara}

\email{sarantsev@pstat.ucsb.edu}

\date{March 15, 2016. Version 27}

\keywords{Reflected Brownian motion, competing Brownian particles, asymmetric collisions, Skorohod problem, stochastic comparison}

\subjclass[2010]{Primary 60K35, secondary 60J60, 60J65, 60H10, 91B26}

\begin{abstract} Consider a finite system of Brownian particles on the real line. Each particle has drift and diffusion coefficients depending on its current rank relative to other particles, as in Karatzas, Pal and Shkolnikov (2012). We prove some comparison results for these systems. As an example, we show that if we remove a few particles from the top, then the gaps between adjacent particles become stochastically larger, the local times of collision between adjacent particles become stochastically smaller, and the remaining particles shift upward, in the sense of stochastic ordering. 
\end{abstract}

\maketitle

\section{Introduction} 

\subsection{Definition of a system of competing Brownian particles} 

The topic of this paper is {\it comparison theorems} for a {\it system of $N$ competing Brownian particles with asymmetric collisions}. These results are applied in later articles \cite{MyOwn3, MyOwn5, MyOwn6, MyOwn11}. This system
$$
Y = \left(Y_1, \ldots, Y_N\right)',\ \ \ Y_k = (Y_k(t), t \ge 0),\ \ k = 1, \ldots, N,
$$
is defined as follows. Fix real numbers $g_1, \ldots, g_N$ and positive real numbers $\si_1, \ldots, \si_N$. In addition, fix real numbers $q^+_1$, $q^-_1, \ldots, q^+_N$, $q^-_N$, satisfying the following conditions:
$$
q^+_{k+1} + q^-_k = 1,\ \ k = 1, \ldots, N-1;\ \ 0 < q^{\pm}_k < 1,\ \ k = 1, \ldots, N.
$$
A one-dimensional Brownian motion with drift zero and diffusion one is called a {\it standard Brownian motion}. Let $\BR_+ := [0, \infty)$. Let $\BR_+^d$ be the $d$-dimensional positive orthant. 

\begin{defn} In the standard setting: a filtered probability space $(\Oa, \CF, (\CF_t)_{t \ge 0}, \MP)$ with the filtration satisfying the usual conditions, take i.i.d. standard $(\CF_t)_{t \ge 0}$-Brownian motions $B_1, \ldots, B_N$. Consider a continuous adapted $\BR^N$-valued process 
$$
Y = (Y(t),\ t \ge 0),\ \ \ Y(t) = (Y_1(t), \ldots, Y_N(t))',
$$
and $N-1$ continuous adapted real-valued processes
$$
L_{(k-1, k)} = (L_{(k-1, k)}(t),\ t \ge 0),\ \ k = 2, \ldots, N,
$$
with the following properties:

\medskip

(i) $Y_1(t) \le \ldots \le Y_N(t),\ \ t \ge 0$;

\medskip

(ii) if we let $L_{(0, 1)}(t) \equiv 0$ and $L_{(N, N+1)}(t) \equiv 0$ for notational convenience, then 
\begin{equation}
\label{SDEasymm}
Y_k(t) = Y_k(0) + g_kt + \si_kB_k(t) + q^+_kL_{(k-1, k)}(t) - q^-_kL_{(k, k+1)}(t),\ \ \ k = 1, \ldots, N;
\end{equation}

\medskip

(iii) for $k = 2, \ldots, N$, we have: $L_{(k-1, k)}(0) = 0$, $L_{(k-1, k)}$ is nondecreasing and can increase only when $Y_{k-1} = Y_k$.   

\medskip

Then the process $Y$ is called {\it a system of $N$ competing Brownian particles with asymmetric collisions}, with {\it drift coefficients} $g_1, \ldots, g_N$, {\it diffusion coefficients} $\si_1^2, \ldots, \si_N^2$, and {\it parameters of collision} $q^{\pm}_1,\ldots, q^{\pm}_N$. For each $k = 1, \ldots, N$, the process $Y_k = (Y_k(t), t \ge 0)$ is called the {\it $k$th ranked particle}. For $k = 2, \ldots, N$, the process $L_{(k-1, k)}$ is called the {\it local time of collision between the particles $Y_{k-1}$ and $Y_k$}. 
The {\it gap process} is defined as an $\BR^{N-1}_+$-valued process 
$$
Z = (Z(t),\ t \ge 0),\ Z(t) = \left(Z_1(t), \ldots, Z_{N-1}(t)\right)', 
$$
with 
$$
Z_k(t) = Y_{k+1}(t) - Y_k(t),\ \ k = 1, \ldots, N-1,\ \ t \ge 0.
$$
The component process $Z_k = (Z_k(t), t \ge 0)$ is called the {\it gap between the particles} $Y_k$ and $Y_{k+1}$, for $k = 1,\ldots, N-1$. 

\label{asymmdefn}
\end{defn}

This system was introduced in the paper \cite{KPS2012}; it was shown in this paper that 
it exists in the strong sense and is pathwise unique.

\subsection{An informal preview of some results of this paper} In this paper, we prove some comparison results about these (and more general) systems. 

These comparison results turn out to be important in later research on competing Brownian particles. For example, they allow us to prove a necessary and sufficient condition for a.s. avoiding triple collisions, see \cite{MyOwn3}. Also, these techniques are useful for infinite systems of competing Brownian particles, see \cite{MyOwn6, MyOwn11}. 

As a preview, let us mention a few concrete (and rather intuitive) results.

\medskip

(i) If we remove a few competing Brownian particles $Y_{M+1}, \ldots, Y_N$ from the right, the positions of the remaining particles $Y_1(t), \ldots, Y_M(t)$ at any time $t \ge 0$ shift to the right (in the sense of stochastic comparison), because they no longer feel pressure from the right, exerted by the removed particles. Moreover, the local times $L_{(k, k+1)}(t)$ stochastically decrease, and the gaps $Z_k(t)$ stochastically increase, for $k = 1, \ldots, N-1$. (Corollary~\ref{rightremovalCBP}.)

\medskip

(ii) If we shift (in the sense of stochastic comparison) initial positions $Y_k(0)$, $k = 1, \ldots, N$, of all competing Brownian particles to the right, then their positions $Y_k(t)$, at any fixed time $t \ge 0$ also shift to the right, in the sense of stochastic comparison. (Corollary~\ref{initialshiftCBP} (i).)

\medskip

(iii) If we stochastically increase the initial gaps $Z_k(0)$, $k = 1, \ldots, N-1$, between particles, then at any time $t \ge 0$ the values of the gaps $Z_k(t)$ also stochastically increase, and the local times $Y_{(k, k+1)}(t)$ stochastically decrease, for $k = 1, \ldots, N-1$.(Corollary~\ref{initialshiftCBP} (ii).) 

\medskip

(iv) If we increase the values of parameters $q^+_1, \ldots, q^+_N$, then the particles $Y_1(t), \ldots, Y_N(t)$ stochastically shift to the right. (Corollary~\ref{increaseqCBP}.) 

\medskip

We get these (and similar) results as corollaries of the two main results stated in Section 3: Theorems~\ref{main1} and~\ref{main2}. These two theorems deal with general {\it systems of competing particles}, which are generalizations of competing Brownian particles: they have arbitrary continuous driving functions $X_1(t), \ldots, X_N(t)$, in place of Brownian motions $g_1t + \si_1B_1(t), \ldots, g_Nt + \si_NB_N(t)$. 

Although these results are intuitive and natural, their proofs turn out to be very complicated and technical. Essentially, we approximate the $\BR^N$-valued function 
$$
\left(g_1t + \si_1B_1(t), \ldots, g_Nt + \si_NB_N(t)\right)'
$$
by piecewise linear functions with each piece parallel to a coordinate axis. For such piecewise linear functions, we can solve for $Y_1, \ldots, Y_N$ explicitly and compare these solutions piece by piece.

\subsection{Connection to a semimartingale reflected Brownian motion} The gaps $Z_1, \ldots, Z_{N-1}$ between particles form an $\BR^{N-1}_+$-valued process called a {\it semimartingale reflected Brownian motion} (SRBM) {\it in the orthant}. This is a process which behaves as a Brownian motion inside this orthant, and reflects on each face of the boundary $\pa\BR^{N-1}_+$ in a constant direction (not necessarily normally). We discuss this in detail in Section 2. We also provide comparison techniques for this class of processes. It was originally introduced as a {\it heavy traffic limit} for series of queues, when the intensity at each queue tends to $1$, see \cite{Har1973, Har1978, HW1987b}; see also the survey \cite{Wil1995}. For an SRBM in the orthant, as well as for related systems called {\it fluid networks}, some related results are already known, see \cite{KW1996, KR2012b, R2000, Haddad2010}. 

\subsection{Organization of the paper} The rest of the Introduction contains motivation and historical review of the concept of competing Brownian particles. In Section 2, we state all necessary definitions and provide essential notation and background. In Section 3, we formulate Theorems~\ref{main1} and~\ref{main2}; then we state and prove a few corollaries (including the ones mentioned above). Section 4 is devoted to the proofs of Theorems~\ref{main1} and~\ref{main2}. In Appendix, we state and prove some technical lemmas. 

\subsection{Motivation and historical review of competing Brownian particles}

The topic of competing Brownian particles was started in \cite{BFK2005}. Originally, the so-called {\it named particles} were considered. These are systems of $N$ particles $X_1(t), \ldots, X_N(t)$ on the real line which can swap ranks: for example, $X_1(0) < X_2(0)$ but $X_1(1) > X_2(1)$. Their dynamics can be described as follows: the $k$th leftmost particle $X_{(k)}(t)$ (which, as we say, {\it has rank $k$}), moves as a Brownian motion with drift $g_k$ and diffusion $\si_k^2$, for $k = 1, \ldots, N$. (A remark is in order: if there is a {\it tie}, that is, two or more particles occupy the same position, then it is {\it resolved in the lexicographic order}; that is, particles with smaller names are assigned smaller ranks. However, this particular way of resolving ties is not important, because the set of times when there is a tie has Lebesgue measure zero.) 

If the particles swap ranks, then their drift and diffusion coefficients are also updated: that is, a particle moves according to its {\it current} rank. More precisely, the particles are driven by the following SDE:
$$
\md X_i(t) = \SL_{k=1}^N1\left(X_i\  \mbox{has rank}\ k\  \mbox{at time}\  t\right)\left(g_k\md t + \si_k\md W_i(t)\right),
$$
where $W_1, \ldots, W_N$ are i.i.d. standard Brownian motions. The original motivation came from mathematical finance. In real world, it can be observed that stocks with smaller capitalizations have larger growth rates and larger volatilities. It is straightforward to construct a market model of $N$ stocks which is based on competing Brownian particles and which satisfies this property. Just take $g_1 > g_2 > \ldots > g_N$ and $\si_1 > \si_2 > \ldots > \si_N$, and let 
$$
e^{X_1(t)}, \ldots, e^{X_N(t)}
$$
be the capitalizations of these stocks at time $t \ge 0$. This model also allows to explain another phenomenon observed in the real world: stability of the log-log ranked market weights, see \cite{CP2010}. For other applications of this model in mathematical finance, see \cite{JR2013b, MyOwn4}. Denote by $Y_k(t)$ the particle which at time $t$ has rank $k$. The processes 
$Y_k = (Y_k(t), t \ge 0),\ k = 1, \ldots, N$, are called {\it ranked competing Brownian particles}. By definition, they satisfy
\begin{equation}
\label{comparisonofY}
Y_1(t) \le Y_2(t) \le \ldots \le Y_N(t),\ \ t \ge 0.
\end{equation}
It turns out that the ranked particles form a system from Definition~\ref{asymmdefn} with parameters of collision $q^{\pm}_k = 1/2$, $k = 1, \ldots, N$. 
For each $k = 1, \ldots, N$, the ranked particle $Y_k = (Y_k(t), t \ge 0)$ moves as a Brownian motion with drift $g_k$ and diffusion $\si_k^2$, as long as it stays away from the adjacent ranked particles $Y_{k-1}$ and $Y_{k+1}$. But when $Y_k$ and $Y_{k-1}$ collide, a {\it local time of collision} comes into play. This local time is a nondecreasing process $L_{(k-1, k)} = (L_{(k-1, k)}(t), t \ge 0)$, starting from zero ($L_{(k-1, k)}(0) = 0$), which can increase only when $Y_{k-1} = Y_k$. 

If $Y_{k-1}$ and $Y_k$ were freely moving Brownian motions, then eventually we would have $Y_{k-1}(t) > Y_k(t)$. This is not allowed by~\eqref{comparisonofY}. So the local time $L_{(k-1, k)}$ plays the role of the push which makes the ranked particle $Y_{k-1}$ stay to the left of $Y_k$. It pushes $Y_{k-1}$ to the left, and $Y_k$ to the right. 
It turns out (see \cite[Corollary 2.6]{BG2008}, \cite[Lemma 1]{Ichiba11}) that this push is distributed {\it evenly} between $Y_{k-1}$ and $Y_k$. That is, the part $\frac12 \md L_{(k-1, k)}(t)$ belongs to the particle $Y_k$ and pushes it to the right, while the part $\frac12 \md L_{(k-1, k)}(t)$ (with the minus sign) belongs to the particle $Y_{k-1}$ and pushes it to the left. Moreover, the ranked particle $Y_k$ collides not only with its left neighbor $Y_{k-1}$, but also with its right neighbor $Y_{k+1}$. So $Y_k$ experiences push from the left, when it collides with $Y_{k+1}$. This push is equal to $-\frac12 L_{(k, k+1)}(t)$, where the process $L_{(k, k+1)} = (L_{(k, k+1)}(t), t \ge 0)$ is defined similarly to $L_{(k-1, k)}$. We can summarize this in the following equation: for certain i.i.d. standard Brownian motions $B_1, \ldots, B_N$, 
\begin{equation}
\label{basicrankedSDE}
Y_k(t) = Y_k(0) + g_kt + \si_kB_k(t) + \frac12 L_{(k-1, k)}(t) - \frac12 L_{(k, k+1)}(t),\ \ k = 1, \ldots, N,\ \ t \ge 0.
\end{equation}
As before, for the sake of notational convenience, we let $L_{(0, 1)}(t) \equiv 0$ and $L_{(N, N+1)}(t) \equiv 0$. 

In the paper \cite{KPS2012}, this model was altered: the new model is, in fact, precisely the one introduced in Definition~\ref{asymmdefn}. Instead of the coefficients $1/2$ and $-1/2$ in the equation~\eqref{basicrankedSDE}, we have real-valued parameters $q^+_k$ and $-q^-_k$ for the new altered model. These numbers $q^{\pm}_k,\ k = 1, \ldots, N$, are called {\it parameters of collision}. The local time $L_{(k-1, k)}$ in the new model is split {\it unevenly} between $Y_{k-1}$ and $Y_k$: the part $q^+_k\md L_{(k-1, k)}(t)$ belongs to the ranked particle $Y_k$ and pushes it to the right, and the part $q^-_{k-1}\md L_{(k-1, k)}$ (with the minus sign) belongs to the particle $Y_{k-1}$ and pushes it to the left. As mentioned before, these parameters of collision must satisfy
\begin{equation}
\label{parametersofcollision}
q^+_k + q^-_{k-1} = 1,\ \ k = 1, \ldots, N-1;\ q^+_k > 0,\ q^-_{k} > 0,\ k = 1, \ldots, N.
\end{equation}
This new model is called {\it a system of competing Brownian particles with asymmetric collisions}. Note that for this altered model, we started from ranked particles $Y_1, \ldots, Y_N$ rather than named particles $X_1, \ldots, X_N$. If $q^+_k \ne 1/2$ or $q^-_k \ne 1/2$ for at least some $k = 1, \ldots, N$ (collisions are not symmetric), then it is not known for the general case how to define named particles $X_1, \ldots, X_N$. One can define them only up to the first moment of a {\it triple collision}: when three or more particles occupy the same position at the same time; see \cite{KPS2012, MyOwn3}.

There are other applications of systems of competing Brownian particles: they serve as scaling limits of a certain class of exclusion processes on $\MZ$, namely asymmetrically colliding random walks, see \cite{KPS2012}. They are also a discrete analogue of the \textsc{McKean-Vlasov} equation, which governs so-called {\it nonlinear diffusion processes}, where drift and diffusion coefficients at time $t$ depend not on the value of the process $X(t)$, but rather on $F_t(X(t))$, where $F_t$ is a cdf of $X(t)$. For this connection, see \cite{S2012, 4people, Jourdain2000, JR2013a}. 

\begin{rmk} We must stress that in this paper, we consider only systems of ranked competing Brownian particles $\left(Y_1, \ldots, Y_N\right)'$, rather than systems of named competing Brownian particles $\left(X_1, \ldots, X_N\right)'$. 
\end{rmk}

\begin{rmk} If two or more particles $X_1, \ldots, X_N$ collide, it is not immediately clear how to assign ranks to these particles (because they occupy the same position). Usually, we assign lower ranks to particles $X_i(t)$ with smaller indices $i$. However, this rule does not matter, because the set of times when there is a collision of at least two particles has Lebesgue measure zero a.s., see \cite[Lemma 2.3]{MyOwn6}.
\end{rmk}

Systems of competing Brownian particles were further studied in \cite{PP2008, CP2010, PS2010, IchibaThesis, Ichiba11, IK2010, IKS2013, IPS2012, JM2008, JR2013a, JR2013b, JR2014, Reygner2014, MyOwn3, MyOwn5}. There are many generalizations of this model: a {\it second-order model}, where drift and diffusion coefficients depend both on the current rank and on the name of a particle, studied in \cite{FIK2013, Ichiba11}; {\it systems of competing L\'evy particles}, when particles moves as general L\'evy processes instead of Brownian motions, see \cite{S2011}. Infinite systems of competing Brownian particles were introduced in \cite{PP2008}, and further studied in \cite{S2011, IKS2013, DemboTsai, MyOwn6}.  

In the paper \cite{KPS2012}, systems of competing Brownian particles $Y = (Y_1, \ldots, 
Y_N)'$ were used as a diffusion limit for certain interacting particle systems on $\mathbb Z$, which behave in a certain way like asymmetric simple exclusion processes. We conjecture that similar comparison results hold for these interacting particle systems.

\section{Definitions and Background}

\subsection{Notation} We denote by $I_k$ the $k\times k$-identity matrix. For a vector $x = (x_1, \ldots, x_d)' \in \BR^d$, let $\norm{x} := \left(x_1^2 + \ldots + x_d^2\right)^{1/2}$ be its Euclidean norm. For any two vectors $x, y \in \BR^d$, their dot product is denoted by $x\cdot y = x_1y_1 + \ldots + x_dy_d$. We compare vectors $x$ and $y$ componentwise: $x \le y$ if $x_i \le y_i$ for all $i = 1, \ldots, d$; $x < y$ if $x_i < y_i$ for all $i = 1, \ldots, d$; similarly for $x \ge y$ and $x > y$. We compare matrices of the same size componentwise, too. For example, we write $x \ge 0$ for $x \in \BR^d$ if $x_i \ge 0$ for $i = 1, \ldots, d$; $C = (c_{ij})_{1 \le i, j \le d} \ge 0$ if $c_{ij} \ge 0$ for all $i$, $j$. This comparison notation is also valid when $d = \infty$, that is, when we compare infinite-dimensional vectors. 

For $x \in \BR^d$ (this includes the case $d = \infty$), we let $[x, \infty) := \{y \in \BR^d\mid y \ge x\}$. We say two probability measures $\nu_1$ and $\nu_2$ on $\BR^d$ satisfy $\nu_1 \preceq \nu_2$ if for every $y \in \BR^d$ we have: $\nu_1[y, \infty) \le \nu_2[y, \infty)$. We say that $\nu_1$ {\it is stochastically dominated by} $\nu_2$, or $\nu_2$ {\it stochastically dominates} $\nu_1$, or $\nu_1$ {\it is stochastically smaller than} $\nu_2$, or $\nu_2$ {\it is stochastically larger than} $\nu_1$. The same terminology applies to $\BR^d$-valued random variables $X$, $Y$: we say that $X$ is stochastically dominated by $Y$ if the distribution of $X$ is stochastically dominated by the distribution of $Y$. 

Fix $d \ge 1$, and let $I \subseteq \{1, \ldots, d\}$ be a nonempty subset. Write its elements in the order of increase: 
$$
I = \{i_1, \ldots, i_m\},\ \ 1 \le i_1 < i_2 < \ldots < i_m \le d.
$$
For any $x \in \BR^d$, let
$[x]_I := (x_{i_1}, \ldots, x_{i_m})'$. For any $d\times d$-matrix $C = (c_{ij})_{1 \le i, j \le d}$, let 
$$
[C]_I := \left(c_{i_ki_l}\right)_{1 \le k, l \le m}.
$$
In particular, if $p = 1, \ldots, d$, then we let $[x]_p := \left(x_1, \ldots, x_p\right)'$. Let $J \subseteq \{1, \ldots, d\}$ be another nonempty subset. Write its elements in the order of increase: 
$$
J = \{j_1, \ldots, j_l\},\ \ 1 \le j_1 < j_2 < \ldots < j_l \le d.
$$
Then we denote 
$$
[C]_{IJ} := \left(c_{i_kj_s}\right)_{\substack{1 \le k \le m\\ 1 \le s \le l}}.
$$
In particular, for $I = J$ we have: $[C]_{IJ} \equiv [C]_I$. We let 
$$
\mathcal W_N := \{y = (y_1, \ldots, y_N)' \in \BR^N\mid y_1 \le \ldots \le y_N\}.
$$
The set $C([0, T], \BR^d)$ is a Banach space of continuous functions $X : [0, T] \to \BR^d$ with the norm 
\begin{equation}
\label{eq:max-norm}
\norm{X} := \max\limits_{t \in [0, T]}\norm{X(t)}.
\end{equation}

\subsection{Systems of competing particles} As mentioned in the Introduction, in this article we consider not just systems of competing Brownian particles, but more general systems, with arbitrary continuous functions instead of Brownian motions. These general systems are called {\it systems of competing particles}; they might be random or deterministic. 
Let us now define them. 

\begin{defn} Fix a continuous function $X = (X_1, \ldots, X_N)' : \BR_+ \to \BR^N$ such that $X(0) \in \mathcal W_N$. Take {\it parameters of collision}: real numbers $q^{+}_1, q^-_1, \ldots, q^{+}_N, q^-_N$ which satisfy~\eqref{parametersofcollision}. Consider a continuous function $Y = (Y_1, \ldots, Y_N)' : \BR_+ \to \mathcal W_N$, and other $N-1$ continuous functions $L_{(1, 2)}, \ldots, L_{(N-1, N)} : \BR_+ \to \BR$ such that:

\medskip

(i) $Y_k(t) = X_k(t) + q^+_kL_{(k-1, k)}(t) - q^-_kL_{(k, k+1)}(t)$ for $k = 1, \ldots, N$ and $t \ge 0$ (we let $L_{(0, 1)}(t) \equiv 0$ and $L_{(N, N+1)}(t) \equiv 0$ for notational convenience);

\medskip

(ii) $L_{(k, k+1)}(0) = 0$ for $k = 1, \ldots, N-1$;

\medskip

(iii) $L_{(k, k+1)}$ is nondecreasing for each $k = 1, \ldots, N-1$;

\medskip

(iv) $L_{(k, k+1)}$ can increase only when $Y_k(t) = Y_{k+1}(t)$; we can write this formally as the following Stieltjes integral:
$$
\int_0^{\infty}\left(Y_{k+1}(t) - Y_k(t)\right)\md L_{(k, k+1)}(t) = 0,\ \ k = 1, \ldots, N-1.
$$

\medskip

Then the function $Y$ is called a {\it system of $N$ competing particles} with {\it driving function} $X$ and {\it parameters of collisions} $(q^{\pm}_k)_{1 \le k \le N}$. The $k$th component $Y_k$ of the function $Y$ is called the {\it $k$th ranked particle}. The function $L_{(k, k+1)}$ is called the {\it collision term} between the $k$th and the $k+1$st ranked particles $Y_k$ and $Y_{k+1}$. The vector-valued function 
$L = (L_{(1, 2)}, L_{(2, 3)}, \ldots, L_{(N-1, N)})'$ is called the {\it vector of collision terms}. We say that this system {\it starts with $y$}, if $Y(0) = y$. 
\label{generalDef}
The {\it gap process} is defined as was already shown in the Introduction: this is an $\BR^{N-1}_+$-valued process 
$$
Z = (Z(t),\ t \ge 0),\ Z(t) = \left(Z_1(t), \ldots, Z_{N-1}(t)\right)', 
$$
$$
Z_k(t) = Y_{k+1}(t) - Y_k(t),\ \ k = 1, \ldots, N-1,\ \ t \ge 0.
$$
\end{defn}

Now, for the sake of completeness, we essentially rephrase Definition~\ref{asymmdefn}, tying systems of competing {\it Brownian} particles to general systems of competing particles. 

\begin{defn} Assume the standard probabilistic setting: a filtered probability space \newline $(\Oa, \CF, (\CF_t)_{t \ge 0}, \MP)$, with the filtration satisfying the usual conditions. Take i.i.d. standard $(\CF_t)_{t \ge 0}$-Brownian motions $B_1, \ldots, B_N$ and an $\CF_0$-measurable random vector $y \in \mathcal W_N$. Fix parameters of collision $q^{+}_1, q^-_1, \ldots, q^{+}_N, q^-_N$ which satisfy~\eqref{parametersofcollision}. Also, fix real numbers $g_1, \ldots, g_N$ and positive real numbers $\si_1, \ldots, \si_N$. Consider a system $Y$ of $N$ competing particles with the driving function 
$$
X = \left(X_1, \ldots, X_N\right)',\ \ X_k(t) = y_k + g_kt + \si_kB_k(t),\ \ k = 1, \ldots, N,\ \ t \ge 0,
$$
and parameters of collision $(q^{\pm}_k)_{1 \le k \le N}$. 
Then $Y$ is called a ({\it ranked}) {\it system of competing Brownian particles} with {\it drift coefficients} $g_1, \ldots, g_N$, {\it diffusion coefficients} $\si_1, \ldots, \si_N$, and {\it parameters of collision} $(q^{\pm}_k)_{1 \le k \le N}$. The standard Brownian motions $B_1, \ldots, B_N$ are called the {\it driving Brownian motions}. For each $k = 1, \ldots, N-1$, the collision term $L_{(k, k+1)}$ is called the {\it local time of collision} between $Y_k$ and $Y_{k+1}$. The vector of collision terms $L = (L_{(1, 2)}, \ldots, L_{(N-1, N)})'$ is called the {\it vector of local times}.
\label{CBPdef}
\end{defn}

Existence and uniqueness for systems of competing particles from Definition~\ref{generalDef} is proved below. (This straightforward proof is completely analogous to the proof for competing Brownian particles, which was given in \cite[subsection 2.1]{KPS2012}.) 

We can also define {\it infinite} systems of competing particles. The paper \cite{MyOwn6} deals with infinite systems of competing Brownian particles in detail, and it uses a few facts about infinite systems from this article. 

\begin{defn} Let $X_1, X_2, \ldots : \BR_+ \to \BR$ be continuous functions with $X_1(0) \le X_2(0) \le \ldots$ Take {\it parameters of collision}: real numbers $q^+_n, q^-_n,\ n = 1, 2, \ldots$, which satisfy
$$
q^+_{n+1} + q^-_n = 1,\ \ 0 < q^{\pm}_n < 1,\ \ n = 1, 2, \ldots
$$
Consider continuous functions $Y_1, Y_2, \ldots : \BR_+ \to \BR$, $L_{(1, 2)}, L_{(2, 3)}, \ldots : \BR_+ \to \BR$ such that (i), (ii), (iii) and (iv) from Definition~\ref{generalDef} are true, for $k = 1, 2, \ldots$. We let $L_{(0, 1)} \equiv 0$, as in Definition~\ref{generalDef}. Then the system $Y = (Y_1, Y_2, \ldots)$ is called an {\it infinite system of competing particles} with {\it driving function} $X = (X_1, X_2, \ldots)$ and {\it parameters of collision} $(q^{\pm}_n)_{n \ge 1}$. All other terms are defined as in Definition~\ref{generalDef}. Similarly, Definition~\ref{CBPdef} can be adapted for infinite number of Brownian particles. 
\label{InfDefCBP}
\end{defn}

Existence and uniqueness theorem is much harder to prove for infinite systems than for finite systems. Studying infinite systems of competing Brownian particles is the topic of \cite{MyOwn6}, where we prove, in particular, existence results. In this article (see Remark~\ref{rmk:inf}), we state and prove a few comparison theorems for infinite systems, {\it assuming they exist}.  

\begin{rmk} In the rest of the paper, when we use the term {\it parameters of collision}, we always assume that they satisfy condition~\eqref{parametersofcollision}. 
\end{rmk}

\subsection{The Skorohod problem and a semimartingale reflected Brownian motion (SRBM) in the orthant}

The gap process for a system of competing Brownian particles is an $\BR^{N-1}_+$-valued process. When it is away from the boundary $\pa \BR^{N-1}_+$ (that is, when no two particles collide), the gap process behaves as an $N-1$-dimensional Brownian motion. A trickier question is what happens when this process hits the boundary (or, equivalently, when two particles collide). In fact, the gap process reflects from the boundary of the orthant $\BR^{N-1}_+$, but {\it obliquely} rather than normally. Such process is called a {\it semimartingale reflected Brownian motion} (SRBM) with {\it oblique reflection}. Let us first informally describe it, then state the formal definition. 

Fix the dimension $d \ge 1$. Take three parameters: a $d\times d$-matrix $R$ with diagonal elements equal to $1$; another $d\times d$-matrix $A$, which is symmetric and positive definite; and a vector $\mu \in \BR^d$. Denote by $S = \BR_+^d$ the positive $d$-dimensional orthant. A {\it semimartingale reflected Brownian motion} (SRBM) is an $S$-valued Markov process with the following properties:

\medskip

(i) inside the orthant $S$, it moves as a $d$-dimensional Brownian motion with drift vector $\mu$ and covariance matrix $A$;

\medskip

(ii) at each {\it face} $S_i = \{x \in S\mid x_i = 0\}$ of the boundary $\pa S$, it is reflected according to the vector $r_i$, the $i$th column of $R$;

\medskip

If $r_i = e_i$, where $e_i$ is the $i$th vector in the standard basis of $\BR^d$, then the reflection at the face $S_i$ is called {\it normal}. Otherwise, it is called {\it oblique}. 

Now, let us state a formal definition. First, we define a concept of a Skorohod problem in the orthant $\BR^d_+$. This is similar to an SRBM, but with an arbitrary continuous function in place of a Brownian motion. 

\begin{defn} Take a continuous function $X : \BR_+ \to \BR^d$ with $X(0) \in S$. A {\it solution to the Skorohod problem in the positive orthant $S$ with reflection matrix $R$ and driving function $X$} is a continuous function $Z : \BR_+ \to S$ such that there exists another continuous function $L : \BR_+ \to \BR^d$ with the following properties:

\medskip

(i) for every $t \ge 0$, we have: $Z(t) = X(t) + RL(t)$; 

\medskip

(ii) for every $i = 1, \ldots, d$, the function $L_i$ is nondecreasing, satisfies $L_i(0) = 0$ and can increase only when $Z_i(t) = 0$, that is, when $Z(t) \in S_i$. We can write the last property formally as $\int_0^{\infty}Z_i(t)\md L_i(t) = 0$.

\medskip

The function $L$ is called the {\it vector of boundary terms}, and its component $L_i$ is called the {\it boundary term}, corresponding to the face $S_i$, for $i = 1, \ldots, d$. 
\end{defn}

\begin{rmk} This definition can also be stated for a finite time horizon, that is, for functions $X, L, Z$ defined on $[0, T]$ instead of $\BR_+$. 
\end{rmk}

Now we are ready to define an SRBM in the orthant. Take the parameters $R, A, \mu$, described above. Assume the usual setting: a filtered probability space $(\Oa, \CF, (\CF_t)_{t \ge 0}, \MP)$ with the filtration satisfying the usual conditions. 

\begin{defn} Suppose $B = (B(t), t \ge 0)$ is an $(\CF_t)_{t \ge 0}$-Brownian motion in $\BR^d$ with drift vector $\mu$ and covariance matrix $A$. A solution $Z = (Z(t), t \ge 0)$ to the Skorohod problem in $S$ with reflection matrix $R$ and driving function $B$ is called a {\it semimartingale reflected Brownian motion}, or SRBM, {\it in the positive orthant} $S$  with  {\it reflection matrix} $R$, {\it drift vector} $\mu$ and {\it covariance matrix $A$}. It is denoted by $\SRBM^d(R, \mu, A)$.  The process $B$ is called the {\it driving Brownian motion}. We say that $Z$ {\it starts from} $x \in S$ if $Z(0) = x$ a.s. 
\label{SRBMdefn}
\end{defn}

The following definition describes the type of reflection matrices we are going to use in this article. An important equivalent characterization of these matrices is given in \cite[Lemma 2.1]{MyOwn3}. 

\begin{defn} A $d\times d$-matrix $R$ is called a {\it reflection nonsingular $\CM$-matrix} if it can be represented as $R = I_d - Q$, where $Q \ge 0$ has spectral radius less than $1$ and has zeros on the main diagonal. 
\end{defn}

Denote $C_S(\BR_+, \BR^d) := \{X \in C(\BR_+, \BR^d)\mid X(0) \in S\}$, and 
$C_S([0, T], \BR^d) := \{X \in C([0, T], \BR^d)\mid X(0) \in S\}$. The following existence and uniqueness result was shown in \cite{HR1981a}; see also \cite{Wil1995}. 

\begin{prop} Suppose the matrix $R$ is a reflection nonsingular $\CM$-matrix. 

\medskip

(i) For every function $X \in C_S(\BR_+, \BR^d)$, there exists a unique pair of functions $(Z, L) \in C(\BR_+, \BR^d)\times C(\BR_+, \BR^d)$ such that $Z$ is a solution to the Skorohod problem in the orthant $S$, with driving function $X$ and reflection matrix $R$, and $L$ is the corresponding vector of boundary terms. The same result is true if we change $\BR_+$ to $[0, T]$. 

\medskip

(ii) For any $T$, the following mapping is Lipschitz continuous with respect to the maximum norm in~\eqref{eq:max-norm}:
$$
\SP_T : C_S([0, T], \BR^d) \to C([0, T], \BR^d)\times C([0, T], \BR^d),\ \ \SP_T : X \mapsto (Z, L).
$$
 
\medskip

(iii) Take a drift vector $\mu \in \BR^d$, a $d\times d$ positive definite symmetric matrix $A$, and an initial condition $x \in S$. An $\SRBM^d(R, \mu, A)$, starting from $x$, exists in the strong sense and is pathwise unique. The collection of these processes for all $x \in S$ is a Feller continuous strong Markov family. 
\label{existenceSRBM} 
\end{prop}

\subsection{Connection between systems of competing particles and the Skorohod problem}

The gap process for a system of competing particles is a solution to the Skorohod problem in the orthant. In particular, the gap process for competing Brownian particles is an SRBM in the orthant.

\begin{lemma} For a system of competing particles from Definition~\ref{generalDef}, its gap process is a solution to the Skorohod problem in the orthant $\BR^{N-1}_+$ with reflection matrix
\begin{equation}
\label{R}
R = 
\begin{bmatrix}
1 & -q^-_2 & 0 & 0 & \ldots & 0 & 0\\
-q^+_2 & 1 & -q^-_3 & 0 & \ldots & 0 & 0\\
0 & -q^+_3 & 1 & -q^-_4 & \ldots & 0 & 0\\
\vdots & \vdots & \vdots & \vdots & \ddots & \vdots & \vdots\\
0 & 0 & 0 & 0 & \ldots & 1 & -q^-_{N-1}\\
0 & 0 & 0 & 0 & \ldots & -q^+_{N-1} & 1
\end{bmatrix}
\end{equation}
and driving function
\begin{equation}
\label{drivingforgaps}
\left(X_2 - X_1, X_3 - X_2, \ldots, X_N - X_{N-1}\right)'.
\end{equation}
The vector $L$ of collision terms is, in fact, the vector of boundary terms for this Skorohod problem. The matrix $R$ in~\eqref{R} is a reflection nonsingular $\CM$-matrix. 
\label{connection}
\end{lemma}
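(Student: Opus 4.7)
The plan is to verify the defining conditions of the Skorohod problem directly from Definition \ref{generalDef}, and then check the algebraic structure of $R$.

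First I would compute the gap $Z_k = Y_{k+1}-Y_k$ by subtracting condition (i) of Definition \ref{generalDef} written for index $k$ from the same identity at index $k+1$:
$$
Z_k(t) = \bigl(X_{k+1}(t)-X_k(t)\bigr) + q^+_{k+1}L_{(k,k+1)}(t) - q^-_{k+1}L_{(k+1,k+2)}(t) - q^+_k L_{(k-1,k)}(t) + q^-_k L_{(k,k+1)}(t).
$$
Using the crucial relation $q^+_{k+1}+q^-_k = 1$, the two terms involving $L_{(k,k+1)}$ combine to $L_{(k,k+1)}(t)$. Setting $\widetilde L := (L_{(1,2)},\ldots,L_{(N-1,N)})'$ and $\widetilde X := (X_2-X_1,\ldots,X_N-X_{N-1})'$, the resulting identity is exactly the $k$th coordinate of $Z(t)=\widetilde X(t)+R\widetilde L(t)$ with $R$ as in \eqref{R}, once we read off the coefficients $R_{k,k}=1$, $R_{k,k-1}=-q^+_k$, $R_{k,k+1}=-q^-_{k+1}$ from the expression above. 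This covers property (i) of the Skorohod problem.

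The remaining Skorohod conditions on $\widetilde L$ are immediate: conditions (ii), (iii), (iv) of Definition \ref{generalDef} say exactly that $\widetilde L_k(0)=0$, that $\widetilde L_k$ is nondecreasing, and that $\widetilde L_k$ can only increase when $Z_k(t)=Y_{k+1}(t)-Y_k(t)=0$, which is the same as $Z(t)\in S_k$. So the gap process solves the Skorohod problem in $\BR^{N-1}_+$ with reflection matrix $R$ and driving function \eqref{drivingforgaps}.

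It remains to verify that $R$ is a reflection nonsingular $\CM$-matrix. Writing $R = I_{N-1}-Q$, the matrix $Q$ is tridiagonal with zero diagonal and nonnegative off-diagonal entries $Q_{k,k-1}=q^+_k$ and $Q_{k,k+1}=q^-_{k+1}$, so $Q\ge 0$ with zero diagonal by construction. The main step is to show $\rho(Q)<1$. I would do this via a column-sum argument: for $2\le k\le N-2$ the $k$th column of $Q$ sums to $q^+_{k+1}+q^-_k=1$, while the first and last columns sum to $q^+_2<1$ and $q^-_{N-1}<1$, respectively. Equivalently, $Q^\top$ is a substochastic matrix whose row sums equal $1$ on interior rows and are strictly less than $1$ at rows $1$ and $N-1$, and whose associated (killed) birth--death chain on $\{1,\ldots,N-1\}$ is irreducible. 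Since from any state one can reach state $1$ (or $N-1$) in at most $N-2$ steps with positive probability and then be killed in one more step, some power $(Q^\top)^n$ has all row sums uniformly bounded by a constant $c<1$. Hence $\|(Q^\top)^{mn}\|_\infty\le c^m\to 0$, which gives $\rho(Q)=\rho(Q^\top)\le c^{1/n}<1$, completing the verification.

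The most delicate point is the spectral radius estimate; the substitution and verification of the boundary conditions are direct. One could alternatively invoke \cite[Lemma 2.1]{MyOwn3} for the equivalent characterization of reflection nonsingular $\CM$-matrices to bypass the killed-chain argument, but the column-sum/substochastic argument above is self-contained and keeps the proof elementary.
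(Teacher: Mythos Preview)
Your proof is correct and follows the same approach as the paper: both derive the representation of $Z_k$ by subtracting the defining equations and using $q^+_{k+1}+q^-_k=1$, and both appeal to the substochasticity of $Q^{\top}$ for the $\CM$-matrix claim. The only difference is that the paper simply cites \cite{KPS2012} for the fact that $R$ is a reflection nonsingular $\CM$-matrix, whereas you reproduce the column-sum/killed-chain argument in full; your version is self-contained but otherwise identical in spirit.
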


\begin{proof} Just use the property (i) from Definition~\ref{generalDef}; the gap process has the following representation:
$$
Z_k(t) = Y_{k+1}(t) - Y_k(t) = X_{k+1}(t) - X_k(t) + L_{(k, k+1)}(t) - q^+_kL_{(k-1, k)}(t) - q^-_{k+1}L_{(k+1, k+2)}(t),
$$
for $k = 1, \ldots, N-1$, $t \ge 0$. That $R$ is a reflection nonsingular $\CM$-matrix is proved in \cite{KPS2012}. 
\end{proof}

\begin{cor}
For a system of competing Brownian particles from Definition~\ref{generalDef}, its gap process is an $\SRBM^{N-1}(R, \mu, A)$, where $R$ is given by~\eqref{R}, and
\begin{equation}
\label{A}
A = 
\begin{bmatrix}
\si_1^2 + \si_2^2 & -\si_2^2 & 0 & 0 & \ldots & 0 & 0\\
-\si_2^2 & \si_2^2 + \si_3^2 & -\si_3^2 & 0 & \ldots & 0 & 0\\
0 & -\si_3^2 & \si_3^2 + \si_4^2 & -\si_4^2 & \ldots & 0 & 0\\
\vdots & \vdots & \vdots & \vdots & \ddots & \vdots & \vdots\\
0 & 0 & 0 & 0 & \ldots & \si_{N-2}^2 + \si_{N-1}^2 & -\si_{N-1}^2\\
0 & 0 & 0 & 0 & \ldots & -\si_{N-1}^2 & \si_{N-1}^2 + \si_N^2
\end{bmatrix},
\end{equation}
\begin{equation}
\label{mu}
\mu = \left(g_2 - g_1, g_3 - g_2, \ldots, g_N - g_{N-1}\right)'.
\end{equation}
The vector of local times of collision is the vector of boundary terms for this SRBM. 
\end{cor}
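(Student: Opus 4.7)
The plan is to deduce the corollary directly from Lemma~\ref{connection} by identifying the driving function of the Skorohod problem for the gap process as a $(N-1)$-dimensional Brownian motion with the stated drift and covariance, and then quoting Definition~\ref{SRBMdefn}.

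First I would substitute the specific driving functions of Definition~\ref{CBPdef}, namely $\CX_k(t)=y_k+g_kt+\si_kB_k(t)$, into the general conclusion of Lemma~\ref{connection}. By that lemma, the gap process $Z=(Z_1,\ldots,Z_{N-1})'$ solves the Skorohod problem in $\BR^{N-1}_+$ with reflection matrix $R$ (exactly the one in~\eqref{R}, already known to be a reflection nonsingular $\CM$-matrix) and with driving function whose $k$th component is $\CX_{k+1}(t)-\CX_k(t)=(y_{k+1}-y_k)+(g_{k+1}-g_k)t+\si_{k+1}B_{k+1}(t)-\si_kB_k(t)$.

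Next I would check that this driving function is a Brownian motion in $\BR^{N-1}$ with drift $\mu$ given by~\eqref{mu} and covariance matrix $A$ given by~\eqref{A}. The drift part is immediate: the $k$th component has linear part $(g_{k+1}-g_k)t$. For the covariance, since $B_1,\ldots,B_N$ are independent standard $(\CF_t)$-Brownian motions, an elementary computation shows that for $k,l\in\{1,\ldots,N-1\}$ the covariance of the $k$th and $l$th components at time $t$ equals $(\si_{k+1}^2+\si_k^2)\,t$ if $k=l$, equals $-\si_{\max(k,l)}^2\,t$ if $|k-l|=1$, and vanishes if $|k-l|\ge 2$. This is precisely $tA$ with $A$ as in~\eqref{A}, so the driving function is an $(\CF_t)$-Brownian motion with drift $\mu$ and covariance $A$.

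Finally I would invoke Definition~\ref{SRBMdefn}: since $R$ is a reflection nonsingular $\CM$-matrix, the gap process $Z$ is a solution to the Skorohod problem in $\BR^{N-1}_+$ with reflection matrix $R$ driven by an $(\CF_t)$-Brownian motion with drift $\mu$ and covariance $A$, which is by definition an $\SRBM^{N-1}(R,\mu,A)$. There is essentially no obstacle here; the only thing to be careful about is the sign and indexing in the covariance computation of the off-diagonal entries (and the apparent typo in~\eqref{mu}, which should read $(g_2-g_1,g_3-g_2,\ldots,g_N-g_{N-1})'$ as follows from the telescoping computation above).
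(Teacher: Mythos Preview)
Your proposal is correct and follows essentially the same approach as the paper: the paper's proof consists of a single sentence invoking Lemma~\ref{connection} and citing \cite[Section 2.1]{KPS2012}, while you spell out the straightforward covariance computation that underlies that citation. Your observation about the typo in~\eqref{mu} is also correct.
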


\begin{proof} This follows from Lemma~\ref{connection}; it was, in fact, already proved in \cite[Secton 2.1]{KPS2012}.
\end{proof}

This connection allows us to prove existence and uniqueness for systems of competing particles. Let $C_{\mathcal W}(\BR_+, \BR^N) := \{X \in C(\BR_+, \BR^N)\mid X(0) \in \mathcal W_N\}$, and $C_{\mathcal W}([0, T], \BR^N) := \{X \in C([0, T], \BR^N)\mid X(0) \in \mathcal W_N\}$. 

\begin{lemma} Fix $N \ge 2$, the number of particles. Take parameters of collision $(q^{\pm}_n)_{1 \le n \le N}$ which satisfy~\eqref{parametersofcollision}. 

\medskip

(i) For every function $X \in C_{\mathcal W}(\BR_+, \BR^N)$, there exists a unique pair of functions $(Y, L) \in C(\BR_+, \BR^N)\times C(\BR_+, \BR^N)$ such that $Y$ is a system of $N$ competing particles with driving function $X$ and parameters of collision $(q^{\pm}_n)_{1 \le n \le N}$, and $L$ is the corresponding vector of collision terms. 
The same result is true for finite time horizon $[0, T]$ in place of $\BR_+$. 

\medskip
\label{existenceCP}
(ii) For every $T > 0$, the following mapping is Lipschitz continuous with respect to the norm in~\eqref{eq:max-norm}: 
$$
\CP_T : C_{\mathcal W}([0, T], \BR^N) \to C([0, T], \BR^N)\times C([0, T], \BR^N),\ \ 
X \mapsto (Y, L).
$$

\medskip

Take drift coefficients $g_1, \ldots, g_N \in \BR$, diffusion coefficients $\si_1^2, \ldots, \si_N^2 > 0$, and a starting point $y \in \CW_N$. The ranked system of $N$ competing Brownian particles with parameters $(g_n)_{1 \le n \le N}$, $(\si_n^2)_{1 \le n \le N}$, $(q^{\pm}_n)_{1 \le n \le N}$, starting from $y$, exists in the strong sense and is pathwise unique. The collection of these processes, starting from different $y \in \mathcal W_N$, forms a Feller continuous strong Markov family. 
\end{lemma}

\begin{proof} (i) Consider the mapping $\Phi : C_{\CW}([0, T], \BR^N) \to C_S([0, T], \BR^{N-1})$, defined by
$$
\Phi : \left(X_1, \ldots, X_N\right)' \mapsto \left(X_2 - X_1, \ldots, X_N - X_{N-1}\right)'.
$$
By Lemma~\ref{connection}, the matrix $R$ from~\eqref{R} is a reflection nonsingular $\CM$-matrix. Therefore, by Proposition~\ref{existenceSRBM} (i), we can define the mapping $\SP_T$. If $Z$ is the gap process for the system $Y$, then from Lemma~\ref{connection} we get: $(Z, L) = \SP_T(\Phi(X))$. 
Recall that from the definition of the system of competing particles, we have:
$$
\begin{cases}
Y_1(t) = X_1(t) - q^-_1L_{(1, 2)}(t),\\
Y_2(t) = X_2(t) + q^+_2L_{(1, 2)}(t) - q^-_2L_{(2, 3)}(t),\\
\ldots\\
Y_N(t) = X_N(t) - q^-_NL_{(N-1, N)}(t)
\end{cases}
$$
We can find a linear combination of $Y_1, \ldots, Y_N$ which eliminates the collision terms: let
\begin{equation}
\label{alphas}
\al_1 = 1,\ \al_2 = \frac{q^-_1}{q^+_2},\ \al_3 = \frac{q^-_1q^-_2}{q^+_2q^+_3}, \ldots
\end{equation}
Then we get:
\begin{equation}
\label{eq:Z-0}
Z_0(t) \equiv \al_1X_1(t) + \ldots + \al_NX_N(t) = \al_1Y_1(t) + \ldots + \al_NY_N(t).
\end{equation}
Let $\tilde{Z} = \left(Z_0, \ldots, Z_{N-1}\right)' \in \BR^N$; then $\tilde{Z}(t) \equiv CY(t)$, where $C$ is the following $N\times N$-matrix:
$$
C = 
\begin{bmatrix}
\al_1 & \al_2 & \al_3 & \ldots & \al_N\\
-1 & 1 & 0 & \ldots & 0\\
0 & -1 & 1 & \ldots & 0\\
\vdots & \vdots & \vdots & \ddots & \vdots\\
0 & 0 & 0 & \ldots & 1
\end{bmatrix}
$$
One can show that the matrix $C$ is nonsingular. Therefore, $Y(t) = C^{-1}\tilde{Z}(t)$ for $t \ge 0$. This proves existence and uniqueness for $(Y, L)$. This proof works equally well for a finite time horizon. 

\medskip

(ii) The mapping $\CP_T$ is constructed in the proof of part (i) as a composition of the following mappings: $\SP_T$, $\Phi$, $X \mapsto Z_0$ from~\eqref{eq:Z-0}, and 
$\tilde{Z} \mapsto C^{-1}\tilde{Z} = Y$. All these mappings are Lipschitz continuous. 

\medskip

(iii) Follows from (i) and (ii). 
\end{proof}

\section{Results}

\subsection{Main results} 

Let us now state the two main results of this paper. The first result is devoted to the Skorohod problem in the orthant. It states that the solution to the Skorohod problem and the boundary terms are, in some sense, monotone with respect to the driving function and the reflection matrix. 

\begin{thm}
\label{main1}
Fix the dimension $d \ge 1$ and let $S = \BR^d_+$. 
Consider two continuous functions $X, \ol{X} : \BR_+ \to \BR^d$ such that 
$X(0),\ \ol{X}(0) \in S$, and 
\begin{equation}
\label{maincomparison}
X(0) \le \ol{X}(0),\ \ X(t) - X(s) \le \ol{X}(t) - \ol{X}(s),\ t \ge s \ge 0.
\end{equation}
Take two $d\times d$ reflection nonsingular $\CM$-matrices $R$ and $\ol{R}$ such that $R \le \ol{R}$. Let $Z$ and $\ol{Z}$ be the solutions to the Skorohod problems in the orthant $S$ with reflection matrices $R$, $\ol{R}$, and driving functions $X$, $\ol{X}$, respectively. Let $L$, $\ol{L}$ be the corresponding vectors of boundary terms. Then 
$$
Z(t) \le \ol{Z}(t),\ \ L(t) - L(s) \ge \ol{L}(t) - \ol{L}(s),\ \ t \ge s \ge 0.
$$
\end{thm}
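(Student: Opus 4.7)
The plan is to follow the strategy advertised in the introduction: approximate $X$ and $\bar{X}$ by piecewise linear functions whose pieces are parallel to the coordinate axes, establish the comparison inductively piece by piece, and pass to the limit using the Lipschitz continuity of the Skorohod map (contained in Proposition~\ref{existenceSRBM} together with the standard Harrison--Reiman continuous dependence estimates). Throughout I will write $R = I_d - Q$ and $\ol{R} = I_d - \ol{Q}$ with $Q,\ol{Q}\ge 0$ of spectral radius less than $1$ and zero diagonal; the hypothesis $R \le \ol{R}$ becomes $Q \ge \ol{Q}$ componentwise.

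First I would set up the approximation. Choose meshes $0 = t^{(n)}_0 < t^{(n)}_1 < \ldots$ and construct piecewise linear approximants $X^{(n)}, \ol{X}^{(n)}$ converging uniformly on compacts to $X,\ol{X}$, sharing the mesh, such that on each subinterval exactly one coordinate moves. A convenient recipe is to subdivide each macro-interval into $d$ micro-intervals, one per coordinate, and transfer the true coordinate-increment of $X$ (resp.~$\ol{X}$) on the macro-interval into the corresponding micro-interval. Because~\eqref{maincomparison} controls increments coordinatewise, the approximants inherit $X^{(n)}(0) \le \ol{X}^{(n)}(0)$ and $X^{(n)}(t) - X^{(n)}(s) \le \ol{X}^{(n)}(t) - \ol{X}^{(n)}(s)$; in particular on each single piece, the coordinate-increment of $\ol{X}^{(n)}$ dominates that of $X^{(n)}$.

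Next I would treat one axis-parallel piece $[s,u]$ (say the active coordinate is $i$) as an explicit problem. Between the finitely many face events (times at which some $Z_j$ enters or leaves its boundary $\{x_j = 0\}$), the increment $\Delta L$ of the boundary-term vector on $[s,u]$ is characterized by the linear complementarity problem
$$
\Delta L \ge 0,\qquad Z(s) + e_i\,\Delta X_i + R\,\Delta L \ge 0,\qquad \Delta L \cdot \bigl(Z(s) + e_i\,\Delta X_i + R\,\Delta L\bigr) = 0.
$$
For a reflection nonsingular $\CM$-matrix this LCP admits a unique componentwise-minimal nonnegative solution, which I would express as a Neumann-type series in $Q$. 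Assume inductively that $Z^{(n)}(s) \le \ol{Z}^{(n)}(s)$ and that $L^{(n)} - \ol{L}^{(n)}$ is nondecreasing up to $s$. Combining the coordinate inequality $\Delta X^{(n)}_i \le \Delta \ol{X}^{(n)}_i$, the matrix inequality $Q \ge \ol{Q}$, and the monotonicity of the LCP solution in both the right-hand side and in the off-diagonal part of the matrix, I would conclude that $\Delta L^{(n)} \ge \Delta \ol{L}^{(n)}$ on the piece and hence $Z^{(n)}(u) \le \ol{Z}^{(n)}(u)$, propagating the induction.

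Finally I would pass $n \to \infty$: the Skorohod map being uniformly Lipschitz continuous in the driving function for fixed reflection matrix, $(Z^{(n)}, L^{(n)}) \to (Z, L)$ and similarly for the barred quantities, uniformly on compact time intervals, so the pointwise inequalities survive the limit. The main obstacle is the inductive step on a single axis-parallel piece: a push in direction $i$ can drive another coordinate $Z_j$ to zero, triggering a push in direction $j$, which in turn may drive $Z_k$ to zero, and so on. Unravelling this cascade, and verifying that the inequality $Q \ge \ol{Q}$ propagates monotonically through every stage of the cascade rather than reversing at some intermediate face, is where the $\CM$-matrix structure is essential (the series $\sum_{k \ge 0} Q^k$ converges and is monotone in $Q$). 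This bookkeeping is the bulk of the technical work, and I would expect it to occupy Section~4.
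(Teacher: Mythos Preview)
Your overall strategy coincides with the paper's (Section~4): approximate $X,\ol X$ by coupled ``regular'' (axis-parallel piecewise linear) functions sharing the same mesh and active coordinate on each piece (Lemmas~\ref{regular}, \ref{regular1}), invoke the memoryless property (Lemma~\ref{memorylessSP}) to reduce to a single linear piece $X(t)=x+\alpha e_i t$, $\ol X(t)=\ol x+\ol\alpha e_i t$, and pass to the limit via continuity of the Skorohod map (Lemma~\ref{cont}).

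The only difference is how the single-piece comparison is carried out. The paper solves the Skorohod problem explicitly on such a piece (Lemma~\ref{explicitSkorohod}): the active set $I(t)=\{j:Z_j(t)=0\}$ is nondecreasing, so $Z,L$ are piecewise linear with breakpoints at the jumps of $I$, and on each sub-piece $[L(t)]_J$ and $[Z(t)]_{J^c}$ are given by closed-form matrix expressions in $[R]_J^{-1}$. The inequalities $Z\le\ol Z$ and $L-\ol L$ nondecreasing are then checked by a five-case analysis (Section~4.5), the delicate case being $\al\le\ol\al\le 0$, $x_i=\ol x_i=0$, which requires a chain of matrix inequalities among $[R]_J^{-1}$, $[\ol R]_{\ol J}^{-1}$ and their off-diagonal blocks (Lemmas~\ref{P1}--\ref{P7}). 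Your LCP/Neumann-series framing is equivalent in content: the ``monotonicity of the LCP solution in both the right-hand side and the off-diagonal part of the matrix'' is precisely the statement the paper's case analysis establishes, and is not available as a cheaper black box. Two points to watch in your write-up: (i) the endpoint LCP on $[s,u]$ determines only the total increment $L(u)-L(s)$, whereas the theorem requires $L-\ol L$ to be nondecreasing throughout, so you must refine at the face events of \emph{both} systems (the paper's times $\rho_l$) and compare the linear \emph{rates} on each common sub-piece; (ii) for axis-parallel linear drivers with $\al<0$ no coordinate ever \emph{leaves} the boundary (the paper shows $I(t)$ only increases), which is what makes the endpoint LCP agree with the continuous-time Skorohod solution in the first place.
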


Let us explain this informally. Suppose we make the values $X(t), t \ge 0$, and increments $X(t) - X(s),\ 0 \le s \le t$, of the driving function $X$, as well as the off-diagonal elements $r_{ij},\ i \ne j$, of the reflection matrix $R$, smaller. (The diagonal elements $r_{ii}$, $i = 1,\ldots, d$, by definition, are always equal to $1$.) Then the value $Z(t)$ to the Skorohod problem $Z$ decreases (at any fixed time $t \ge 0$), and the values of boundary terms $L_i(t)$, $i = 1, \ldots, d$, increase. 

This is what one would expect: if the driving function $X$ decreases, this will cause the ``driven function'' $Z$ also to decrease, at least until $Z$ is moving inside the orthant $S$. Indeed, $Z$ ``wants to follow'' $X$, by definition of the Skorohod problem. However, since the values $Z(t)$ of the function $Z$ become smaller at any fixed time $t \ge 0$, the process $Z$ hits the boundary more often. 

And this leads to increase in the boundary terms, which grow when $Z$ hits the boundary, and which are ``helping'' $Z$ to stay in the orthant $S$. (Recall that the driving function $X$ starts from the orthant but can leave it later.) The boundary terms $L_j(t) \ge 0$ become larger, while the off-diagonal elements $r_{ij} \le 0,\ i \ne j$, of the reflection matrix $R$ become smaller. So the terms $r_{ij}L_j(t) \le 0$ become smaller for all $i \ne j$. This observation is the core idea of the proof. 


\begin{rmk} Note that the condition that the reflection matrix $R$ has non-positive off-diagonal elements (in other words, that it is a nonsingular reflection $\CM$-matrix) is crucial. Suppose that $r_{21} > 0$. 
When $Z$ hits the face $S_1$, that is, when $Z_1(t) = 0$, the boundary term $L_1$ might increase by some increment $\md L_1(t)$. So the component $Z_2$ might get additional increase $r_{21}\md L_1(t)$. 
Consider a concrete example: two driving functions $X$ and $\ol{X}$, with 
$$
X_1(t) = -t,\ \ol{X}_1(t) = 1 - t,\ X_i(t) = \ol{X}_i(t) = 1,\ i = 2, \ldots, d.
$$
These functions satisfy the conditions of Theorem~\ref{main1}. Let $R = \ol{R}$ be a reflection nonsingular $\CM$-matrix. Let us solve the Skorohod problem in the orthant $S$ for reflection matrix $R$ and driving functions $X$ and $\ol{X}$. The function $X$ hits $S_1$ already at time $t = 0$, but $\ol{X}$ does this at time $t = 1$. So $Z_2$ gets some of this increase mentioned above before $\ol{Z}_2$ does. Actually, one can find the solutions explicitly: for $t \in [0, 1]$, 
$$
Z_2(t) = 1 + r_{21}t,\ \ol{Z}_2(t) = 1.
$$
Therefore, the statement of Theorem~\ref{main1} is not true in this case. 
\end{rmk}


The part of Theorem~\ref{main1} concerning the functions $Z$ and $\ol{Z}$ is already known: see \cite{KR2012b, R2000, KW1996, Haddad2010}. However, our results allow to compare not just solutions to the Skorohod problem, but boundary terms as well. This comparison of boundary terms plays crucial role in some of the proofs in our subsequent paper \cite{MyOwn6}. We could not find the results about boundary terms in the existing literature; this served as a motivation for Theorem~\ref{main1}. 

The other theorem deals with systems of competing particles. Consider a system of $N$ competing particles. If we increase the values and increments of driving functions, as well as the coefficients $q^+_n,\ n = 2, \ldots, N$, then the output $Y(t)$ (positions of competing particles) will increase, too. Increasing coefficients $q^+_n,\ n = 2, \ldots, N$, has the following sense: for each $n$, at every collision between the ranked particles $Y_n$ and $Y_{n+1}$, the share of the push going to $Y_{n+1}$ (which pushes this particle to the right) increases, and the share of the push going to $Y_n$ (which pushes this particle to the left) decreases. 

\begin{thm}
\label{main2}
Fix $N \ge 2$, the number of particles. Consider two continuous functions 
$X,\ \ol{X} : \BR_+ \to \BR^N$, with $X(0), \ol{X}(0) \in \mathcal W_N$, such that
$$
X(0) \le \ol{X}(0),\ X(t) - X(s) \le \ol{X}(t) - \ol{X}(s),\ 0 \le s \le t.
$$
Fix parameters of collision $(q^{\pm}_n)_{1 \le n \le N}$ and $(\ol{q}^{\pm}_n)_{1 \le n \le N}$, such that 
$$
q^+_n \le \ol{q}^+_n,\ n = 2, \ldots, N.
$$
Consider systems $Y$ and $\ol{Y}$ of competing particles with driving functions $X$ and $\ol{X}$, and parameters of collision $(q^{\pm}_n)_{1 \le n \le N}$ and $(\ol{q}^{\pm}_n)_{1 \le n \le N}$. Then 
$$
Y(t) \le \ol{Y}(t),\ t \ge 0.
$$
\end{thm}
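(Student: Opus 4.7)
The plan is to follow the strategy announced in the introduction: approximate the continuous driving functions $X$ and $\ol X$ by piecewise linear functions whose pieces are each parallel to a single coordinate axis, analyze each axis-parallel piece by explicit formulas, and then pass to the limit.

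First, I would construct piecewise linear approximations $X^{(m)}\to X$ and $\ol X^{(m)}\to \ol X$ uniformly on compacts, on a common refined partition, so that within each cell at most one coordinate of $X^{(m)}$ and at most one coordinate of $\ol X^{(m)}$ is changing linearly. This construction can be carried out coordinate by coordinate while preserving both hypotheses: $X^{(m)}(0)\le\ol X^{(m)}(0)$ and $X^{(m)}(t)-X^{(m)}(s)\le \ol X^{(m)}(t)-\ol X^{(m)}(s)$ for $s\le t$. The solution map $X\mapsto Y$ is continuous with respect to uniform convergence on compacts, as follows from Lemma~\ref{connection} combined with continuity of the Skorohod map in the orthant for a fixed reflection nonsingular $\CM$-matrix; hence it suffices to prove the comparison for the piecewise linear approximations.

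Next, on a single subinterval where only $X_j$ is changing linearly (say increasing), I would integrate the dynamics of $Y$ explicitly. Initially only $Y_j$ moves; when it meets $Y_{j+1}$, a cascade of local-time increments $L_{(j,j+1)}, L_{(j+1,j+2)},\ldots$ propagates along the chain (or in the opposite direction if $X_j$ decreases), with each collision distributing the push according to $q^{\pm}_n$. This yields a closed-form expression for the increment $Y(t_{k+1})-Y(t_k)$ in terms of $Y(t_k)$, the single-coordinate increment of $X$, and the parameters of collision; an analogous expression holds for $\ol Y$. The inequality $q^+_n\le \ol q^+_n$, together with the constraint $q^+_{n+1}+q^-_n=1$ giving $q^-_n\ge \ol q^-_n$, means that at every collision a larger fraction of the push is transmitted upward and a smaller fraction downward in the barred system. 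I would use these explicit formulas to show $Y(t_{k+1})\le \ol Y(t_{k+1})$ whenever $Y(t_k)\le \ol Y(t_k)$, then iterate along the partition to conclude the comparison at all breakpoints, extending to intermediate $t$ by the explicit within-piece formulas and finally letting $m\to\infty$.

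\textbf{Main obstacle.} The hardest step will be the piece-by-piece comparison in Step 3, because the two cascades in $Y$ and $\ol Y$ may cover different index ranges: starting from different gap configurations at $t_k$, a cascade triggered by the same coordinate move may terminate earlier in one system than in the other. One must therefore show that the aggregate effect of a partial cascade in $Y$ is dominated by that of a possibly deeper cascade in $\ol Y$, regardless of where each cascade stops, and that this dominance is compatible with the hypothesis $Y(t_k)\le\ol Y(t_k)$ on all components (not just those visited by a cascade). The careful case analysis required here, across all possible contact patterns at the start of a piece, is precisely what the authors describe as the ``very complicated and technical'' core of the paper.
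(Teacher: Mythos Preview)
Your plan is essentially the paper's own proof: approximate by coupled regular (axis-parallel piecewise linear) driving functions, use continuity of $X\mapsto Y$ to reduce to that case, use the memoryless property to reduce further to a single linear piece $X(t)=x+\al e_i t$, $\ol X(t)=\ol x+\ol\al e_i t$, solve explicitly via the cascade formula, and compare. Two small remarks: first, make sure your approximations are \emph{coupled} in the sense that on each piece the \emph{same} coordinate $j$ moves in both $X^{(m)}$ and $\ol X^{(m)}$ (the paper's standard approximating sequence does this automatically); second, the obstacle you anticipate is resolved in the paper by arguing at the first time the comparison could fail, where $Y_j(0)=\ol Y_j(0)$ forces $Y_i(0)=\ol Y_i(0)$ and hence the cascade set satisfies $I(0)\supseteq \ol I(0)$ (not the other way around), after which the explicit velocity formula together with $q^-_n\ge \ol q^-_n$, $q^+_n\le \ol q^+_n$, $\al\le\ol\al$ gives the inequality directly.
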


\subsection{Corollaries} There are many corollaries of these two main results, which are straightforward but interesting. They are used in \cite{MyOwn6}. We shall state and prove them in this subsection. 

\begin{cor} Take a $d\times d$-reflection nonsingular $\CM$-matrix $R$.
Consider two copies of an $\SRBM^d(R, \mu, A)$: $Z$ and $\ol{Z}$, starting from $Z(0)$ and $\ol{Z}(0)$ such that $Z(0) \preceq \ol{Z}(0)$. Let $L$ and $\ol{L}$ be the corresponding vectors of boundary terms. Then 
$$
Z(t) \preceq \ol{Z}(t),\ t \ge 0;
$$
$$
L(t) - L(s) \succeq \ol{L}(t) - \ol{L}(s),\ 0 \le s \le t.
$$
\end{cor}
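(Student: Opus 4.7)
The strategy is the standard reduction from stochastic comparison to pathwise comparison via a synchronous coupling, with Theorem~\ref{main1} as the engine.

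First, by Strassen's theorem, the hypothesis $Z(0) \preceq \ol{Z}(0)$ lets us construct, on some probability space $(\Omega, \CF, \MP)$ with a filtration satisfying the usual conditions, a pair $(\xi, \ol{\xi})$ of $S$-valued $\CF_0$-measurable random variables such that $\xi \stackrel{d}{=} Z(0)$, $\ol{\xi} \stackrel{d}{=} \ol{Z}(0)$, and $\xi \le \ol{\xi}$ almost surely. On the same space, take a single $(\CF_t)$-Brownian motion $B$ in $\BR^d$ with drift $\mu$ and covariance $A$, independent of $(\xi, \ol{\xi})$. Define the two driving functions
\begin{equation*}
X(t) := \xi + B(t), \qquad \ol{X}(t) := \ol{\xi} + B(t), \qquad t \ge 0.
\end{equation*}
By Proposition~\ref{existenceSRBM} applied pathwise, there exist unique continuous functions $Z^{\sharp}, \ol{Z}^{\sharp}$ and $L^{\sharp}, \ol{L}^{\sharp}$ solving the Skorohod problem in $S$ with reflection matrix $R$ and driving functions $X, \ol{X}$, respectively. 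By the pathwise uniqueness statement in Proposition~\ref{existenceSRBM}, the laws of $(Z^{\sharp}, L^{\sharp})$ and $(\ol{Z}^{\sharp}, \ol{L}^{\sharp})$ agree with those of $(Z, L)$ and $(\ol{Z}, \ol{L})$, respectively.

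Next, verify the hypotheses of Theorem~\ref{main1} pathwise. Almost surely, $X(0) = \xi \le \ol{\xi} = \ol{X}(0)$, and for all $0 \le s \le t$,
\begin{equation*}
\ol{X}(t) - \ol{X}(s) = B(t) - B(s) = X(t) - X(s),
\end{equation*}
so the monotonicity condition~\eqref{maincomparison} holds (as equality on increments). Taking $\ol{R} := R$ trivially satisfies $R \le \ol{R}$, and both matrices are reflection nonsingular $\CM$-matrices. Theorem~\ref{main1} therefore applies on each sample path and yields
\begin{equation*}
Z^{\sharp}(t) \le \ol{Z}^{\sharp}(t), \qquad L^{\sharp}(t) - L^{\sharp}(s) \ge \ol{L}^{\sharp}(t) - \ol{L}^{\sharp}(s), \qquad 0 \le s \le t,
\end{equation*}
almost surely. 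For any fixed $t \ge 0$ and $y \in \BR^d$, the event $\{\ol{Z}^{\sharp}(t) \ge y\} \supseteq \{Z^{\sharp}(t) \ge y\}$ a.s., so $\MP(Z^{\sharp}(t) \ge y) \le \MP(\ol{Z}^{\sharp}(t) \ge y)$; since $Z^{\sharp}(t) \stackrel{d}{=} Z(t)$ and $\ol{Z}^{\sharp}(t) \stackrel{d}{=} \ol{Z}(t)$, this gives $Z(t) \preceq \ol{Z}(t)$. The same argument applied to $\ol{L}(t) - \ol{L}(s)$ and $L(t) - L(s)$ produces the reverse stochastic inequality for boundary term increments.

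The only genuine subtlety is making sure the coupled processes actually reproduce the prescribed marginal laws: this is exactly what pathwise uniqueness in Proposition~\ref{existenceSRBM} provides, together with the fact that both original processes share the same reflection data $(R, \mu, A)$, so the same driving Brownian motion may be fed into both Skorohod problems. Everything else is a direct pathwise application of Theorem~\ref{main1}.
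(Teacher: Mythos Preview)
Your proof is correct and follows essentially the same route as the paper: couple the initial conditions so that $\xi \le \ol\xi$ almost surely, drive both Skorohod problems with a common Brownian motion, and invoke Theorem~\ref{main1} pathwise. The paper's own argument is the same one, just stated more tersely (``switch from stochastic domination to a.s.\ domination by changing the probability space''); your version spells out the Strassen-type coupling and the passage back from pathwise to stochastic ordering.
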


\begin{proof} We can switch from stochastic domination $Z(0) \preceq \ol{Z}(0)$ to a.s. domination, by changing the probability space. Assume that $B = (B(t), t \ge 0)$ is a $d$-dimensional Brownian motion, starting at the origin, with drift vector $\mu$ and reflection matrix $A$. Then $Z$ and $\ol{Z}$ are solutions to the Skorohod problem in $\BR^d_+$ with driving functions $Z(0) + B(t)$, $\ol{Z}(0) + B(t)$, respectively, and reflection matrix $R$, and $L$, $\ol{L}$ are corresponding vectors of boundary terms. 
The rest follows from Theorem~\ref{main1}. 
\end{proof}

\begin{cor} Fix $N \ge 2$, the number of particles. Also, fix parameters of collision $(q^{\pm}_n)_{1 \le n\le N}$. Take two continuous functions $X, \ol{X} : \BR_+ \to \BR^N$ such that for 
\begin{equation}
\label{defofW}
W = (X_2 - X_1, \ldots, X_N - X_{N-1})',\ \ol{W} = (\ol{X}_2 - \ol{X}_1, \ldots, \ol{X}_N - \ol{X}_{N-1})',
\end{equation}
we have: 
$$
W(0) \le \ol{W}(0),\ W(t) - W(s) \le \ol{W}(t) - \ol{W}(s),\ 0 \le s \le t.
$$
Let $Y$, $\ol{Y}$ be the systems of competing particles with parameters of collision $(q^{\pm}_n)_{1 \le n \le N}$ and driving functions $X$ and $\ol{X}$, respectively. Let $Z$, $\ol{Z}$ be the corresponding gap processes, and let $L$, $\ol{L}$ be the corresponding vectors of collision terms. Then 
$$
Z(t) \le \ol{Z}(t),\ t \ge 0;\ L(t) - L(s) \ge \ol{L}(t) - \ol{L}(s),\ 0 \le s \le t.
$$
\label{cor4}
\end{cor}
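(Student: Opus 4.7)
The plan is to reduce the statement directly to Theorem~\ref{main1} applied to the gap process, using the identification given by Lemma~\ref{connection}. Since both systems $Y$ and $\ol{Y}$ use the \emph{same} parameters of collision $(q^{\pm}_n)_{1 \le n \le N}$, the reflection matrix $R$ from~\eqref{R} that governs each gap process is the same for $Z$ and $\ol{Z}$. In particular, we will take $\ol{R} = R$ in the application of Theorem~\ref{main1}; the condition $R \le \ol{R}$ is then trivial.

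First, I would invoke Lemma~\ref{connection} to observe that $Z$ is the unique solution to the Skorohod problem in $\BR^{N-1}_+$ with reflection matrix $R$ and driving function $W = (X_2 - X_1, \ldots, X_N - X_{N-1})'$, and similarly $\ol{Z}$ is the unique solution with the same reflection matrix $R$ and driving function $\ol{W}$. Moreover, Lemma~\ref{connection} identifies the associated vectors of boundary terms with the vectors of collision terms: the boundary term for the $k$th face in the Skorohod problem for the gap process is exactly $L_{(k, k+1)}$, so $L = (L_{(1,2)}, \ldots, L_{(N-1, N)})'$ plays the role of the Skorohod boundary-term vector, and likewise for $\ol{L}$.

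Next I would verify the hypotheses of Theorem~\ref{main1} for the driving functions $W$ and $\ol{W}$. The assumption $W(0) \le \ol{W}(0)$ is given, and $W(0), \ol{W}(0) \in \BR^{N-1}_+ = S$ because $X(0), \ol{X}(0) \in \mathcal{W}_N$. The increment condition $W(t) - W(s) \le \ol{W}(t) - \ol{W}(s)$ for $0 \le s \le t$ is also given. Since $R$ is a reflection nonsingular $\CM$-matrix by Lemma~\ref{connection}, Theorem~\ref{main1} applies and yields
\[
Z(t) \le \ol{Z}(t),\ t \ge 0;\qquad L(t) - L(s) \ge \ol{L}(t) - \ol{L}(s),\ 0 \le s \le t,
\]
which is exactly the desired conclusion.

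There is no real obstacle here — the only thing to be careful about is to apply Theorem~\ref{main1} to the gap Skorohod problem (dimension $N-1$) rather than directly to the particle system, and to note that equality of the parameters of collision in both systems produces identical reflection matrices, so the monotonicity hypothesis on $R$ holds trivially.
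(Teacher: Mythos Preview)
Your proposal is correct and follows exactly the same approach as the paper: identify $Z$ and $\ol{Z}$ (together with $L$ and $\ol{L}$) as solutions and boundary terms of Skorohod problems in $\BR^{N-1}_+$ with the common reflection matrix $R$ and driving functions $W$, $\ol{W}$ via Lemma~\ref{connection}, then apply Theorem~\ref{main1} with $\ol{R}=R$. Your write-up is in fact slightly more careful than the paper's, explicitly noting that $W(0),\ol{W}(0)\in S$ and that the hypothesis $R\le\ol{R}$ is trivially satisfied.
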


\begin{proof} The functions $Z$ and $\ol{Z}$ are solutions to the Skorohod problem in the orthant $\BR^{N-1}_+$ with reflection matrix $R$ from~\eqref{R} and driving functions $W$ and $\ol{W}$, respectively. The functions $L$ and $\ol{L}$ are the corresponding vectors of boundary terms for these two Skorohod problems. Apply Theorem~\ref{main1} and finish the proof. 
\end{proof}

\begin{cor} Suppose $X : \BR_+ \to \BR^d$ is a continuous function with $X(0) \in S$. Fix a $d\times d$-reflection nonsingular $\CM$-matrix $R$. 
Take a nonempty subset $I \subseteq \{1, \ldots, d\}$ with $|I| = p$. Let $Z$ be the solution to the Skorohod problem in $S$ with reflection matrix $R$ and driving function $X$, and let $L$ be the corresponding vector of boundary terms. Also, let $\ol{Z}$ be the solution to the Skorohod problem in $\BR_+^p$ with reflection matrix $[R]_I$ and driving function $[X]_I$, and let $\ol{L}$ be the corresponding vector of boundary terms. Then 
$$
[Z(t)]_I \le \ol{Z}(t),\ t \ge 0;\ [L(t)]_I - [L(s)]_I \ge \ol{L}(t) - \ol{L}(s),\ 0 \le s \le t.
$$
\label{cor3}
\end{cor}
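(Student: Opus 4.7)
The plan is to exhibit $[Z]_I$ itself as the solution of a Skorohod problem in the smaller orthant $\BR^p_+$ with reflection matrix $[R]_I$, and then invoke Theorem~\ref{main1} to compare it with $\ol{Z}$.

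First I would project the identity $Z(t) = X(t) + RL(t)$ onto the coordinates in $I$. Splitting the columns of $R$ according to $I$ and its complement $I^c = \{1,\ldots,d\}\setminus I$ gives
$$
[Z(t)]_I = [X(t)]_I + [R]_{I,I^c}[L(t)]_{I^c} + [R]_I\,[L(t)]_I =: \tilde{X}(t) + [R]_I\,[L(t)]_I,
$$
where $\tilde{X}(t) := [X(t)]_I + [R]_{I,I^c}[L(t)]_{I^c}$. The coordinates of $[L]_I$ are nondecreasing, start from zero, and the $i$th one (for $i \in I$) increases only when $Z_i(t) = 0$, i.e.\ exactly when $[Z(t)]_I$ lies on the $i$th face of $\BR^p_+$. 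Hence $[Z]_I$ solves the Skorohod problem in $\BR^p_+$ with reflection matrix $[R]_I$ and driving function $\tilde{X}$, and $[L]_I$ is its vector of boundary terms.

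Next I would compare $\tilde X$ with $[X]_I$. Because $R$ is a reflection nonsingular $\CM$-matrix, its off-diagonal entries are nonpositive, so $[R]_{I,I^c} \le 0$. Since $[L(\cdot)]_{I^c}$ is componentwise nondecreasing with $[L(0)]_{I^c}=0$, we get $\tilde{X}(0) = [X(0)]_I$ and, for all $0 \le s \le t$,
$$
\tilde{X}(t) - \tilde{X}(s) = [X(t)-X(s)]_I + [R]_{I,I^c}\bigl([L(t)]_{I^c}-[L(s)]_{I^c}\bigr) \le [X(t)-X(s)]_I.
$$
Thus $\tilde{X}$ and $[X]_I$ fit the comparison~\eqref{maincomparison} of Theorem~\ref{main1}. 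I would also note that $[R]_I$ is itself a reflection nonsingular $\CM$-matrix: writing $R = I_d - Q$ with $Q \ge 0$, zero diagonal, and spectral radius less than $1$, the principal submatrix $[Q]_I$ is a nonnegative matrix with zero diagonal whose spectral radius is bounded by that of $Q$ (e.g.\ by Perron–Frobenius applied to $Q$ and $[Q]_I$, or directly by monotonicity of the spectral radius of nonnegative matrices under taking principal submatrices), so $[R]_I = I_p - [Q]_I$ is of the required form.

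Having established these ingredients, I would apply Theorem~\ref{main1} with reflection matrices $R \leftarrow [R]_I$, $\ol{R} \leftarrow [R]_I$ (the hypothesis $R \le \ol{R}$ holds trivially) and driving functions $X \leftarrow \tilde{X}$, $\ol{X} \leftarrow [X]_I$. The conclusion is precisely
$$
[Z(t)]_I \le \ol{Z}(t), \qquad [L(t)]_I - [L(s)]_I \ge \ol{L}(t) - \ol{L}(s), \quad 0 \le s \le t,
$$
as required. The only potentially subtle step is the realization of $[Z]_I$ as the solution of a genuine Skorohod problem with continuous driving function $\tilde X$; continuity of $\tilde X$ follows from continuity of $L$, and the face-by-face complementarity condition for $[L]_I$ is inherited directly from that for $L$, so there is no real obstacle.
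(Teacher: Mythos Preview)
Your proposal is correct and follows essentially the same route as the paper: project the identity $Z=X+RL$ onto $I$, recognize $[Z]_I$ as the Skorohod solution with reflection matrix $[R]_I$ and modified driving function $\tilde X=[X]_I+[R]_{I,I^c}[L]_{I^c}$, observe that $\tilde X$ has smaller increments than $[X]_I$ because $[R]_{I,I^c}\le 0$ and $[L]_{I^c}$ is nondecreasing, and apply Theorem~\ref{main1}. You supply a bit more detail than the paper does (verifying $[R]_I$ is a reflection nonsingular $\CM$-matrix and checking the complementarity condition is inherited), but the argument is the same.
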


\begin{rmk} Corollary~\ref{cor3} has the following intuitive sense: suppose we remove a few components of the driving function. Then these (no longer existing) components do not hit zero and do not contribute  (via boundary terms) to the  decrease of the remaining components. If the component $j$ was removed but the component $i$ stayed, then in the equation for $Z_i(t)$ there is no longer the term $r_{ij}L_j(t) \le 0$. Thus, $Z_i(t)$ becomes larger. 
\end{rmk}

\begin{proof} Recall that $Z(t) \equiv X(t) + RL(t)$. For $i \in I$, $t \ge 0$,
$$
Z_i(t) = X_i(t) + \SL_{j \in I}r_{ij}L_j(t) + \SL_{j \notin I}r_{ij}L_j(t).
$$
Therefore, $[Z]_I$ is the solution of the Skorohod problem in $\BR^{p}_+$ with reflection matrix $[R]_I$ and driving function 
$$
\ol{X} = (\ol{X}_i)_{i \in I},\ \ \ \ol{X}_i(t) = X_i(t) + \SL_{j \notin I}r_{ij}L_j(t),\ \ i \in I.
$$
But $r_{ij} \le 0$ for $i \in I,\ j \in I^c$, because $R$ is a reflection nonsingular $\CM$-matrix. Moreover, each of the processes $L_j,\ j \in I^c$, is nondecreasing. Therefore, 
$$
\ol{X}_i(t) - \ol{X}_i(s) \le X_i(t) - X_i(s),\ \ 0 \le s \le t,\ \ i \in I.
$$
Apply Theorem~\ref{main1} and finish the proof. 
\end{proof}

The following corollary is a consequence (and a Brownian counterpart) of 
Corollary~\ref{cor3}. 

\begin{cor}
Take a $d\times d$ reflection nonsingular $\CM$-matrix $R$, a $d\times d$ positive definite symmetric matrix $A$, and a drift vector $\mu \in \BR^d$. Fix a nonempty subset $I \subseteq \{1, \ldots, d\}$. Let 
$$
Z = \SRBM^d(R, \mu, A),\ \ \ol{Z} = \SRBM^{|I|}([R]_I, [\mu]_I, [A]_I)
$$
such that $[Z(0)]_I$ has the same law as $\ol{Z}(0)$. Then $[Z]_I \preceq \ol{Z}$. 
\label{propcomp}
\end{cor}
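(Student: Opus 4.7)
The plan is to reduce this to the pathwise statement of Corollary~\ref{cor3} by coupling the two SRBMs on a common probability space with compatible driving Brownian motions.

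First, since stochastic domination is invariant under change of the underlying probability space, and since $[Z(0)]_I$ and $\ol{Z}(0)$ have the same distribution, we may assume without loss of generality that we work on a single filtered probability space $(\Oa, \CF, (\CF_t)_{t\ge 0}, \MP)$ satisfying the usual conditions, and that $[Z(0)]_I = \ol{Z}(0)$ almost surely. On this space, let $B = (B(t), t\ge 0)$ be a $d$-dimensional $(\CF_t)_{t\ge 0}$-Brownian motion with drift vector $\mu$ and covariance matrix $A$, independent of $Z(0)$. By Proposition~\ref{existenceSRBM}, the strong $\SRBM^d(R, \mu, A)$ starting from $Z(0)$ is realized as the unique solution to the Skorohod problem in $S = \BR^d_+$ with driving function $X(t) = Z(0) + B(t)$ and reflection matrix $R$; call this solution $Z$ and let $L$ be the associated vector of boundary terms.

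Next, I would observe that the marginal process $[B]_I = ([B_i]_{i\in I})$ is a $|I|$-dimensional Brownian motion with drift vector $[\mu]_I$ and covariance matrix $[A]_I$ (because the joint Gaussian structure restricts naturally to subsets of coordinates), and it is adapted to $(\CF_t)_{t\ge 0}$. Hence the unique solution $\ol{Z}$ to the Skorohod problem in $\BR^{|I|}_+$ with reflection matrix $[R]_I$ and driving function $\ol{X}(t) = \ol{Z}(0) + [B]_I(t)$, with associated boundary terms $\ol{L}$, is a strong $\SRBM^{|I|}([R]_I, [\mu]_I, [A]_I)$ starting from $\ol{Z}(0)$. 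Note that $[R]_I$ is itself a reflection nonsingular $\CM$-matrix (its off-diagonal entries are those of $R$, so they are non-positive, and the spectral radius of the corresponding $[Q]_I$ is at most that of $Q$, hence less than $1$), so Proposition~\ref{existenceSRBM} applies.

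With this coupling in hand, the key observation is that $\ol{X}(t) = \ol{Z}(0) + [B]_I(t) = [Z(0)]_I + [B(t)]_I = [X(t)]_I$ almost surely, so the driving function for $\ol{Z}$ coincides with the restriction $[X]_I$ used in Corollary~\ref{cor3}. Applying Corollary~\ref{cor3} pathwise on a full-measure event yields $[Z(t)]_I \le \ol{Z}(t)$ almost surely for every $t \ge 0$. Since almost sure componentwise domination implies stochastic domination of the joint laws, we conclude $[Z]_I \preceq \ol{Z}$, as required.

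The only mildly delicate point (and the natural place to slip up) is to check that the restriction $[B]_I$ is genuinely a Brownian motion with the correct drift and covariance, and that this suffices to produce a bona fide strong $\SRBM$ with the prescribed parameters; everything else is a direct invocation of Corollary~\ref{cor3} after coupling.
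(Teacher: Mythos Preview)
Your proposal is correct and follows exactly the approach the paper intends: the paper states this corollary as ``a consequence (and a Brownian counterpart) of Corollary~\ref{cor3}'' without further details, and your argument supplies precisely those details --- couple the initial conditions, observe that $[B]_I$ is a Brownian motion with drift $[\mu]_I$ and covariance $[A]_I$, and apply Corollary~\ref{cor3} pathwise. Your justification that $[R]_I$ is again a reflection nonsingular $\CM$-matrix is also correct (and is recorded in the paper as Lemma~\ref{P5}).
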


\begin{cor} Let $1 < N  \le M$. Fix a continuous function $X : \BR_+ \to \BR^M$ with $X(0) \in \mathcal W_M$. Fix parameters of collision $(q^{\pm}_n)_{1 \le n \le M}$. Let $Y$ be the system of $M$ competing particles with parameters of collision $(q^{\pm}_n)_{1 \le n \le M}$ and driving function $X$. Let $\ol{Y}$ be the system of $N$ competing particles with parameters of collision $(q^{\pm}_n)_{1 \le n \le N}$ and driving function $[X]_N$. Let $Z$, $\ol{Z}$ be the corresponding gap processes, and let $L$, $\ol{L}$ be the corresponding vectors of boundary terms. Then 
\begin{equation}
\label{firstassertion}
Z_k(t) \le \ol{Z}_k(t),\ k = 1, \ldots, N-1,\ t \ge 0;
\end{equation}
\begin{equation}
\label{secondassertion}
L_k(t) - L_k(s) \ge \ol{L}_k(t) - \ol{L}_k(s),\ k = 1, \ldots, N-1,\ 0 \le s \le t;
\end{equation}
\begin{equation}
\label{thirdassertion}
Y_k(t) \le \ol{Y}_k(t),\ k = 1, \ldots, N,\ t \ge 0.
\end{equation}
\label{corremovalCP}
\end{cor}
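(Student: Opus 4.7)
The plan is to reduce parts~\eqref{firstassertion} and~\eqref{secondassertion} to Corollary~\ref{cor3} applied at the level of gap processes, and then to recover~\eqref{thirdassertion} by a telescoping argument.

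First I would identify the Skorohod problems governing $Z$ and $\ol{Z}$. By Lemma~\ref{connection}, $Z$ solves the Skorohod problem in $\BR_+^{M-1}$ whose reflection matrix is the tridiagonal nonsingular $\CM$-matrix $R$ of~\eqref{R} built from $(q^{\pm}_n)_{2 \le n \le M-1}$, and whose driving function is $W = (X_2-X_1, \ldots, X_M-X_{M-1})'$. Likewise $\ol{Z}$ solves the Skorohod problem in $\BR_+^{N-1}$ with the analogous $(N-1)\times(N-1)$ reflection matrix $\ol{R}$ and driving function $\ol{W} = (X_2-X_1, \ldots, X_N-X_{N-1})'$. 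The crucial observation is that with $I = \{1, \ldots, N-1\} \subseteq \{1, \ldots, M-1\}$, the matrix $\ol{R}$ coincides with $[R]_I$ (both are tridiagonal and determined by the same collision parameters $q^{\pm}_2, \ldots, q^{\pm}_{N-1}$) and $\ol{W} = [W]_I$ by construction. Corollary~\ref{cor3} then applies verbatim and yields $[Z(t)]_I \le \ol{Z}(t)$ and $[L(t)]_I - [L(s)]_I \ge \ol{L}(t) - \ol{L}(s)$ for all $0 \le s \le t$, which on extracting components is precisely~\eqref{firstassertion} and~\eqref{secondassertion}.

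For~\eqref{thirdassertion}, I would start with $k = 1$. Writing out property (i) of Definition~\ref{generalDef} together with the convention $L_{(0,1)} \equiv 0$ gives $Y_1(t) = X_1(t) - q^-_1 L_{(1,2)}(t)$ and $\ol{Y}_1(t) = X_1(t) - q^-_1 \ol{L}_{(1,2)}(t)$. Setting $s = 0$ in~\eqref{secondassertion} and using $L_{(1,2)}(0) = \ol{L}_{(1,2)}(0) = 0$ produces $L_{(1,2)}(t) \ge \ol{L}_{(1,2)}(t)$, so $Y_1(t) \le \ol{Y}_1(t)$. For $2 \le k \le N$, I would telescope through the gaps: $Y_k(t) - Y_1(t) = \sum_{j=1}^{k-1} Z_j(t)$ and analogously $\ol{Y}_k(t) - \ol{Y}_1(t) = \sum_{j=1}^{k-1} \ol{Z}_j(t)$, so combining $Y_1 \le \ol{Y}_1$ with~\eqref{firstassertion} delivers~\eqref{thirdassertion}.

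No step is genuinely hard; the only piece of bookkeeping is verifying $\ol{R} = [R]_I$, which is immediate from the explicit tridiagonal form~\eqref{R}. It is worth noting that Theorem~\ref{main2} cannot be invoked directly here, because the two systems have different numbers of particles and hence driving functions of different dimensions; the monotonicity in dimension is precisely what Corollary~\ref{cor3}, applied to the gap processes, is designed to provide.
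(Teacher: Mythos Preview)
Your argument is correct, and it takes a genuinely different route from the paper. The paper does not invoke Corollary~\ref{cor3}; instead, its key observation is that the first $N$ components of the $M$-particle system themselves form an $N$-particle competing system, with driving function $(X_1,\ldots,X_{N-1},X_N - q^-_N L_{(N,N+1)})'$, obtained by absorbing the term $-q^-_N L_{(N,N+1)}$ into the $N$th driver. Since $L_{(N,N+1)}$ is nondecreasing and vanishes at $0$, this modified driving function has smaller increments than $[X]_N$, so Theorem~\ref{main2} gives~\eqref{thirdassertion} and Corollary~\ref{cor4} gives~\eqref{firstassertion} and~\eqref{secondassertion}. Thus your closing remark is slightly off: Theorem~\ref{main2} \emph{can} be applied, and the paper does so, once one reinterprets $[Y]_N$ as an $N$-particle system in its own right. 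Your route instead handles the dimension drop purely at the level of the gap Skorohod problem via Corollary~\ref{cor3}, and then recovers~\eqref{thirdassertion} from~\eqref{firstassertion} and~\eqref{secondassertion} by the clean telescoping through $Y_1$; this has the virtue of never invoking Theorem~\ref{main2} at all.
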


\begin{rmk} Corollary~\ref{corremovalCP} has the following meaning: if we take a system of competing particles and remove a few particles from the right, then there is ``less pressure'' on the remaining left particles which would push them further to the left. Therefore, the gaps become wider;  there are less collisions, so the collision terms become smaller; and the remaining particles themselves shift to the right.
\end{rmk}

\begin{proof} For the system $Y$, we can write the first $N$ particles as
$$
\begin{cases}
Y_1(t) = X_1(t) - q^-_1L_{(1,2)}(t), \\
Y_2(t) = X_2(t) + q^+_2L_{(1,2)}(t) - q^-_2L_{(2,3)}(t),\\
\ldots\\
Y_N(t) = X_N(t) + q^+_NL_{(N-1, N)}(t) - q^-_NL_{(N, N+1)}(t).
\end{cases}
$$
So the vector-valued function $(Y_1, \ldots, Y_N)' = [Y]_N$ can itself be considered as a system of competing particles, with driving function
$$
\ol{X} = (X_1, X_2, \ldots, X_{N-1}, X_N - q^-_NL_{(N, N+1)}(t))'
$$
and parameters of collision $(q^{\pm}_n)_{1 \le n \le N}$. Since $L_{(N, N+1)}(0) = 0$, and $L_{(N, N+1)}$ is nondecreasing, we have:
$$
\ol{X}(0) = X(0),\ \ol{X}(t) - \ol{X}(s) \le X(t) - X(s),\ 0 \le s \le t.
$$
Therefore, by Theorem~\ref{main2}, we get: $[Y(t)]_N \le \ol{Y}(t)$, which proves~\eqref{thirdassertion}. The functions $W$ and $\ol{W}$, defined in~\eqref{defofW}, satisfy 
$$
\ol{W}(0) = W(0),\ \ol{W}(t) - \ol{W}(s) \le W(t) - W(s),\ 0 \le s \le t.
$$
Apply Corollary~\ref{cor4} to prove~\eqref{firstassertion} and~\eqref{secondassertion}. This completes the proof. 
\end{proof}

\begin{rmk}
We can also remove a few particles from the left instead of the right. We can formulate the statement analogous to Corollary~\ref{corremovalCP}. The inequalities~\eqref{firstassertion} and~\eqref{secondassertion} remain true, and the inequality~\eqref{thirdassertion} changes sign. 
\end{rmk}

If we remove particles from both the left and the right, then there are less collisions, so the remaining collision terms decrease and the remaining gaps increase. But we cannot say anything about the remaining particles themselves (whether they shift to the left or to the right). Removal of a few particles from the right eliminates some push from the right; similarly, removal of a few particles from the left eliminates some push from the left. But we cannot say which of these two effects outweighs the other one. 

\begin{cor} Fix $1 \le N_1 < N_2 \le M$. Fix a continuous function $X : \BR_+ \to \BR^M$ with $X(0) \in \mathcal W_M$. Let $Y$ be the system of $N$ competing particles with parameters of collision $(q^{\pm}_n)_{1 \le n \le M}$ and driving function $X$. Let $\ol{Y} = (\ol{Y}_{N_1}, \ldots, \ol{Y}_{N_2})'$ be the system of $N_2 - N_1 + 1$ competing particles with parameters of collision $(q^{\pm}_n)_{N_1 \le n \le N_2}$ and driving function $(X_{N_1}, \ldots, X_{N_2})'$. Let $Z = (Z_1, \ldots, Z_{M-1})'$ and $\ol{Z} = (Z_{N_1}, \ldots, Z_{N_2-1})'$ be the corresponding gap processes, and let \label{corremovalCP2}
$$
L = (L_{(1, 2)}, \ldots, L_{(M-1, M)})',\ \ol{L} = (\ol{L}_{(N_1, N_1+1)}, \ldots, \ol{L}_{(N_2-1, N_2)})',
$$
be the vectors of collision terms. Then 
$$
Z_k(t) \le \ol{Z}_k(t),\ k = N_1, \ldots, N_2-1,\ t \ge 0;
$$
$$
L_{(k, k+1)}(t) - L_{(k, k+1)}(s) \ge \ol{L}_{(k, k+1)}(t) - \ol{L}_{(k, k+1)}(s),\ k = N_1, \ldots, N_2-1,\ 0 \le s \le t.
$$
\end{cor}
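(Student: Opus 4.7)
The strategy is to realize the restriction $Y^* := (Y_{N_1}, \ldots, Y_{N_2})'$ of the original system as itself a system of $N_2 - N_1 + 1$ competing particles with parameters $(q^{\pm}_n)_{N_1 \le n \le N_2}$, but with a modified driving function, and then compare with $\ol{Y}$ at the level of gap processes via Theorem~\ref{main1}.

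First, I will read off from the defining equation of $Y$ that $Y^*$, together with the collision terms $L^*_{(k,k+1)} := L_{(k,k+1)}$ for $k = N_1, \ldots, N_2-1$, satisfies the four conditions of Definition~\ref{generalDef} on the index range $\{N_1, \ldots, N_2\}$ with parameters $(q^{\pm}_n)_{N_1 \le n \le N_2}$ and driving function
$$
\tilde{X} := \bigl(X_{N_1} + q^+_{N_1} L_{(N_1-1, N_1)},\ X_{N_1+1},\ \ldots,\ X_{N_2-1},\ X_{N_2} - q^-_{N_2} L_{(N_2, N_2+1)}\bigr)'.
$$
Only the first and last components of $\tilde{X}$ differ from the corresponding restriction of $X$; the corrections are, respectively, nondecreasing and nonincreasing and start at zero.

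Next, I pass to the gap process. By Lemma~\ref{connection}, $Z^* := (Z_{N_1}, \ldots, Z_{N_2-1})'$ solves the Skorohod problem in $\BR^{N_2-N_1}_+$ with reflection matrix $R^*$ (the obvious tridiagonal matrix built from $(q^{\pm}_n)_{N_1 \le n \le N_2}$) and driving function $\tilde{W} := (\tilde{X}_{k+1} - \tilde{X}_k)_{k = N_1}^{N_2 - 1}$, while $\ol{Z}$ solves the Skorohod problem with the \emph{same} reflection matrix $R^*$ and driving function $\ol{W} := (X_{k+1} - X_k)_{k = N_1}^{N_2-1}$. A direct inspection gives $\tilde{W}(0) = \ol{W}(0)$; the first component of $\ol{W} - \tilde{W}$ equals the nondecreasing process $q^+_{N_1} L_{(N_1-1, N_1)}$, the last component of $\ol{W} - \tilde{W}$ equals the nondecreasing process $q^-_{N_2} L_{(N_2, N_2+1)}$, and the middle components vanish. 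In particular, the hypothesis $\tilde{W}(t) - \tilde{W}(s) \le \ol{W}(t) - \ol{W}(s)$ for $0 \le s \le t$ of Theorem~\ref{main1} holds.

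Applying Theorem~\ref{main1} to these two Skorohod problems yields $Z^*(t) \le \ol{Z}(t)$ and $L^*(t) - L^*(s) \ge \ol{L}(t) - \ol{L}(s)$ for $0 \le s \le t$, which is precisely the desired conclusion, since $Z^*_k = Z_k$ and $L^*_{(k,k+1)} = L_{(k,k+1)}$ for $k = N_1, \ldots, N_2 - 1$. The only non-mechanical step is the algebraic verification that $Y^*$, with the boundary effects from $Y_{N_1-1}$ and $Y_{N_2+1}$ absorbed into $\tilde{X}_{N_1}$ and $\tilde{X}_{N_2}$, really satisfies Definition~\ref{generalDef} on the smaller index set; since the collision terms between $Y^*$'s internal indices are inherited unchanged and the outer collision terms never enter the internal equations, this is routine. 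Thus the proof requires no essentially new idea beyond a careful bookkeeping of which collision terms belong to the restricted system.
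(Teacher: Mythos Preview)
Your proposal is correct and follows exactly the pattern of the paper's proof of Corollary~\ref{corremovalCP}: absorb the outer collision terms $q^+_{N_1}L_{(N_1-1,N_1)}$ and $-q^-_{N_2}L_{(N_2,N_2+1)}$ into the driving function of the restricted system, observe that the induced driving function for the gaps has nondecreasing defect relative to $(X_{k+1}-X_k)_{N_1\le k\le N_2-1}$, and invoke Theorem~\ref{main1} (equivalently Corollary~\ref{cor4}). The paper gives no separate proof of this corollary, leaving it as the evident two-sided analogue of Corollary~\ref{corremovalCP}, so your write-up is precisely what the paper intends.

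One remark: an even shorter route is to bypass the competing-particle bookkeeping entirely and apply Corollary~\ref{cor3} directly. The full gap process $(Z_1,\ldots,Z_{M-1})$ solves the Skorohod problem in $\BR^{M-1}_+$ with the tridiagonal reflection matrix $R$ and driving function $(X_2-X_1,\ldots,X_M-X_{M-1})$; taking $I=\{N_1,\ldots,N_2-1\}$, the submatrix $[R]_I$ is exactly the reflection matrix for the $(N_2-N_1+1)$-particle system, and $[X^{\mathrm{gap}}]_I$ is exactly the driving function of $\ol{Z}$. Corollary~\ref{cor3} then yields the conclusion in one line. Your approach has the advantage of making the particle-level interpretation explicit, while the Corollary~\ref{cor3} route is more economical.
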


The rest of the corollaries deal with competing Brownian particles. 
The first of these corollaries is a Brownian counterpart of Corollary~\ref{corremovalCP}. It says that if you remove a few competing Brownian particles from the right, then the remaining particles shift to the right, the local times of collisions decrease, and the gaps increase. This corollary was mentioned in the Introduction, subsection 1.2. 

\begin{cor} Fix $1 < N \le M$. Take a system $Y$ of $M$ competing Brownian particles with parameters $(g_k)_{1 \le k \le M}$, $(\si_k^2)_{1 \le k \le M}$, $(q^{\pm}_k)_{1 \le k \le M}$, starting from $y \in \mathcal W_M$. Let $B_1, \ldots, B_M$ be the corresponding driving Brownian motions. Take another system $\ol{Y}$ of $N$ competing Brownian particles with parameters $(g_k)_{1 \le k \le N}$, $(\si_k^2)_{1 \le k \le N}$, $(q^{\pm}_k)_{1 \le k \le N}$, starting from $[y]_N$, with driving Brownian motions $B_1, \ldots, B_N$. Let $Z$, $\ol{Z}$ be the corresponding gap processes, and let $L$, $\ol{L}$ be the corresponding vectors of collision local times. Then 
$$
Y_k(t) \le \ol{Y}_k(t),\ k = 1, \ldots, N,\ t \ge 0;
$$
$$
Z_k(t) \le \ol{Z}_k(t),\ k = 1, \ldots, N-1,\ t \ge 0;
$$
$$
L_{(k, k+1)}(t) - L_{(k, k+1)}(s) \ge \ol{L}_{(k, k+1)}(t) - \ol{L}_{(k, k+1)}(s),\ k = 1, \ldots, N-1,\ 0 \le s \le t.
$$
\label{rightremovalCBP}
\end{cor}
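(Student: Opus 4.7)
The plan is to recognize this as an immediate pathwise application of Corollary~\ref{corremovalCP} once the two Brownian systems are realized on a common probability space with shared driving Brownian motions.

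First, I would fix an outcome $\omega \in \Omega$ and define the (continuous, $\BR^M$-valued) driving function
\[
X(t) = \bigl(y_1 + g_1 t + \si_1 B_1(t),\ \ldots,\ y_M + g_M t + \si_M B_M(t)\bigr)',
\]
noting $X(0) = y \in \mathcal W_M$. By Definition~\ref{CBPdef}, the system $Y$ of $M$ competing Brownian particles is precisely the system of $M$ competing particles with driving function $X$ and parameters of collision $(q^{\pm}_k)_{1 \le k \le M}$. By the same definition, since $\ol{Y}$ uses the first $N$ driving Brownian motions $B_1, \ldots, B_N$, the same drift and diffusion coefficients indexed from $1$ to $N$, and the starting vector $[y]_N$, the system $\ol{Y}$ is the system of $N$ competing particles with driving function $[X]_N$ and parameters of collision $(q^{\pm}_k)_{1 \le k \le N}$.

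Next, I would simply invoke Corollary~\ref{corremovalCP} on this pair $(Y, \ol{Y})$, pathwise for each $\omega$. That corollary yields exactly the three asserted inequalities
\[
Y_k(t) \le \ol{Y}_k(t),\quad Z_k(t) \le \ol{Z}_k(t),\quad L_{(k, k+1)}(t) - L_{(k, k+1)}(s) \ge \ol{L}_{(k, k+1)}(t) - \ol{L}_{(k, k+1)}(s),
\]
for the appropriate ranges of $k$ and $0 \le s \le t$. Since these hold for every $\omega$, they hold almost surely, which is the claim.

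There is no real obstacle here beyond being careful to line up the indexing: the corollary to be invoked (Corollary~\ref{corremovalCP}) is stated for general continuous driving functions, so no measurability issue arises in the reduction, and the coupling through shared Brownian motions $B_1, \ldots, B_N$ is exactly what makes the pathwise comparison transfer to the Brownian setting with no extra work.
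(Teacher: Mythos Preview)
Your proposal is correct and is exactly the approach the paper takes: the paper does not even write out a separate proof for this corollary, simply introducing it as the ``Brownian counterpart of Corollary~\ref{corremovalCP}'', which is precisely your pathwise application of that result to the common driving function $X$.
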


The next corollary is a Brownian counterpart of Corollary~\ref{corremovalCP2}. It says that if you remove a few competing Brownian particles from the right and from the left simultaneously, then the local times of collisions decrease, and the gaps increase. 

\begin{cor} Fix $1 \le N_1 < N_2 \le M$. Take a system $Y$ of $M$ competing Brownian particles with parameters $(g_k)_{1 \le k \le M}$, $(\si_k^2)_{1 \le k \le M}$, $(q^{\pm}_k)_{1 \le k \le M}$, starting from $y \in \mathcal W_M$. Let $B_1, \ldots, B_M$ be the corresponding driving Brownian motions. Take another system $\ol{Y} = (\ol{Y}_{N_1}, \ldots, \ol{Y}_{N_2})'$ of $N_2 - N_1 + 1$ competing Brownian particles with parameters $(g_k)_{N_1 \le k \le N_2}$, $(\si_k^2)_{N_1 \le k \le N_2}$, $(q^{\pm}_k)_{N_1 \le k \le N_2}$, starting from $(y_{N_1}, \ldots, y_{N_2})'$, with driving Brownian motions $B_{N_1}, \ldots, B_{N_2}$. Let $Z = (Z_1, \ldots, Z_{M-1})'$, $\ol{Z} = (\ol{Z}_{N_1}, \ldots, \ol{Z}_{N_2})'$ be the corresponding gap processes, and let $L = (L_{(1, 2)}, \ldots, L_{(M-1, M)})'$, $\ol{L} = (L_{(N_1, N_1+1)}, \ldots, L_{(N_2-1, N_2)})'$ be the corresponding vectors of collision terms. Then 
$$
Z_k(t) \le \ol{Z}_k(t),\ k = N_1, \ldots, N_2-1,\ t \ge 0;
$$
$$
L_{(k, k+1)}(t) - L_{(k, k+1)}(s) \ge \ol{L}_{(k, k+1)}(t) - \ol{L}_{(k, k+1)}(s),\ k = N_1, \ldots, N_2-1,\ 0 \le s \le t.
$$
\label{removalCBP2}
\end{cor}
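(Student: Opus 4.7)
The plan is to deduce Corollary~\ref{removalCBP2} as an immediate pathwise consequence of its deterministic counterpart, Corollary~\ref{corremovalCP2}, by exhibiting explicit driving functions that match the two systems to which the latter corollary applies.

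First I would fix a sample $\oa \in \Oa$ and set
$$
X_k(t) := y_k + g_k t + \si_k B_k(t), \quad k = 1, \ldots, M, \ t \ge 0.
$$
Then $X : \BR_+ \to \BR^M$ is continuous and $X(0) = y \in \mathcal W_M$, so by Definition~\ref{CBPdef} the process $Y$ is precisely the (pathwise unique, by Lemma~\ref{existenceCP}) system of $M$ competing particles with driving function $X$ and parameters of collision $(q^{\pm}_n)_{1 \le n \le M}$. Next, by the choice of drift coefficients $(g_k)_{N_1 \le k \le N_2}$, diffusion coefficients $(\si_k^2)_{N_1 \le k \le N_2}$, driving Brownian motions $B_{N_1}, \ldots, B_{N_2}$, and initial values $(y_{N_1}, \ldots, y_{N_2})'$, the driving function of $\ol{Y}$ is exactly
$$
(X_{N_1}, X_{N_1+1}, \ldots, X_{N_2})'.
$$
Thus $\ol{Y}$ is the (pathwise unique) system of $N_2 - N_1 + 1$ competing particles with driving function $(X_{N_1}, \ldots, X_{N_2})'$ and parameters of collision $(q^{\pm}_n)_{N_1 \le n \le N_2}$.

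Applying Corollary~\ref{corremovalCP2} pathwise to these two systems then yields, for each $\oa$ outside a $\MP$-null set, the inequalities
$$
Z_k(t) \le \ol{Z}_k(t),\ k = N_1, \ldots, N_2 - 1,\ t \ge 0,
$$
and
$$
L_{(k, k+1)}(t) - L_{(k, k+1)}(s) \ge \ol{L}_{(k, k+1)}(t) - \ol{L}_{(k, k+1)}(s),\ k = N_1, \ldots, N_2-1,\ 0 \le s \le t,
$$
which are precisely the asserted comparisons. There is essentially no obstacle: the only point to verify is the identification of the driving function of $\ol{Y}$ with $[X]_I$ for $I = \{N_1, \ldots, N_2\}$, and this is immediate from Definition~\ref{CBPdef} together with the stated reuse of $B_{N_1}, \ldots, B_{N_2}$. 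No stochastic-domination argument is needed because the coupling through the common Brownian motions already gives pointwise (a.s.) inequalities, which are strictly stronger than stochastic comparison.
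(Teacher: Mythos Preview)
Your proof is correct and follows exactly the approach the paper intends: the statement is introduced as ``a Brownian counterpart of Corollary~\ref{corremovalCP2}'' and carries no separate proof in the paper, the implicit argument being precisely the pathwise identification of driving functions $X_k(t) = y_k + g_kt + \si_kB_k(t)$ and the application of Corollary~\ref{corremovalCP2}.
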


\begin{rmk} Corollaries~\ref{corremovalCP}, ~\ref{corremovalCP2}, ~\ref{rightremovalCBP} and~\ref{removalCBP2} can be generalized for the case of infinite particle systems, when $M = \infty$. Recall that we introduced infinite systems of competing particles (and including competing Brownian particles) in Definition~\ref{InfDefCBP}. Again, here we do not prove existence of these infinite systems; we state these corollaries, assuming these systems exist. The proofs are the same as for finite $M$, with only trivial adjustments. 
\label{rmk:inf}
\end{rmk}

The following corollary was also mentioned in the Introduction, subsection 1.2. 

\begin{cor} Take two systems, $Y$ and $\ol{Y}$, of $N$ competing Brownian particles with parameters $(g_k)_{1 \le k \le N}$, $(\si_k^2)_{1 \le k \le N}$, $(q^{\pm}_k)_{1 \le k \le N}$. Suppose these two systems have the same driving Brownian motions. Let $Z$, $\ol{Z}$ be the corresponding gap processes, and let $L$, $\ol{L}$ be the corresponding vectors of collision terms. 

\medskip

(i) If $Y(0) \le \ol{Y}(0)$, then $Y(t) \le \ol{Y}(t),\ t \ge 0$. 

\medskip

(ii) If $Z(0) \le \ol{Z}(0)$, then $Z(t) \le \ol{Z}(t),\ t \ge 0$, and $L(t) - L(s) \ge \ol{L}(t) - \ol{L}(s),\ 0 \le s \le t$. 
\label{initialshiftCBP}
\end{cor}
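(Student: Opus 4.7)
Both parts will follow almost immediately from Theorems~\ref{main1} and~\ref{main2} once we identify the driving functions. Because $Y$ and $\ol{Y}$ share the same driving Brownian motions $B_1, \ldots, B_N$ and the same drift and diffusion parameters, their driving functions (in the sense of Definition~\ref{CBPdef}) are
$$
\CX_k(t) = Y_k(0) + g_k t + \si_k B_k(t), \qquad \ol{\CX}_k(t) = \ol{Y}_k(0) + g_k t + \si_k B_k(t),
$$
so that $\ol{\CX}_k(t) - \CX_k(t) = \ol{Y}_k(0) - Y_k(0)$ is constant in $t$. In particular
$$
\CX(t) - \CX(s) \;=\; \ol{\CX}(t) - \ol{\CX}(s), \qquad 0 \le s \le t,
$$
so the ``increment'' hypothesis in both main results is satisfied with equality, and only the hypothesis on initial values needs to be checked.

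For part (i), I would simply invoke Theorem~\ref{main2} with the identical parameters of collision (so $q^+_n = \ol{q}^+_n$ holds trivially) and driving functions $\CX$, $\ol{\CX}$. The hypothesis $\CX(0) \le \ol{\CX}(0)$ reduces to $Y(0) \le \ol{Y}(0)$, which is assumed, and $Y(0), \ol{Y}(0) \in \mathcal{W}_N$ since $Y$ and $\ol{Y}$ are ranked systems. Theorem~\ref{main2} then gives $Y(t) \le \ol{Y}(t)$ for all $t \ge 0$.

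For part (ii), the natural step is to apply Corollary~\ref{cor4} to $X = \CX$ and $\ol{X} = \ol{\CX}$. With $W$ and $\ol{W}$ as in~\eqref{defofW}, one checks
$$
W_k(0) = Y_{k+1}(0) - Y_k(0) = Z_k(0), \qquad \ol{W}_k(0) = \ol{Z}_k(0),
$$
so the assumption $Z(0) \le \ol{Z}(0)$ becomes $W(0) \le \ol{W}(0)$. The equality of increments of $\CX$ and $\ol{\CX}$ noted above immediately yields $W(t) - W(s) = \ol{W}(t) - \ol{W}(s)$, so Corollary~\ref{cor4} applies and delivers both conclusions: $Z(t) \le \ol{Z}(t)$ and $L(t) - L(s) \ge \ol{L}(t) - \ol{L}(s)$.

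There is essentially no obstacle here; the work was already done in Theorems~\ref{main1}, \ref{main2} and Corollary~\ref{cor4}. The only thing to be careful about is that in part (ii) one must use the gap-level comparison (driven by $W,\ol{W}$) rather than the particle-level comparison, since $Z(0) \le \ol{Z}(0)$ does not imply $Y(0) \le \ol{Y}(0)$; conversely, $Y(0) \le \ol{Y}(0)$ does not imply $Z(0) \le \ol{Z}(0)$, which is why (i) and (ii) are stated as separate assertions rather than one implying the other.
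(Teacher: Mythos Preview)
Your proposal is correct and matches the approach the paper intends: the paper does not write out a proof for this corollary, but your argument follows exactly the pattern used in the neighboring Corollaries~\ref{increaseqCBP} and the drift-comparison corollary, namely identifying the common driving functions $\CX$, $\ol{\CX}$ and invoking Theorem~\ref{main2} for part~(i) and Corollary~\ref{cor4} for part~(ii). Your remark distinguishing the particle-level and gap-level hypotheses is a nice clarification but not strictly needed.
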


The last two corollaries show how to compare systems of competing Brownian particles in case of the change in drift coefficients or parameters of collision. The first of these corollaries tells that if you increase $q^+_1, \ldots, q^+_N$, the whole system will shift to the right. 

\begin{cor} Consider two systems $Y$ and $\ol{Y}$ of $N$ competing Brownian particles with common drift and diffusion coefficients $(g_k)_{1 \le k \le N}$, $(\si_k^2)_{1 \le k \le N}$, but different parameters of collision $(q^{\pm}_k)_{1 \le k \le N}$, $(\ol{q}^{\pm}_k)_{1 \le k \le N}$,
such that $\ol{q}_n^+ \ge q^+_n,\ n = 1, \ldots, N$. Suppose $Y(0) = \ol{Y}(0)$ and the driving Brownian motions are the same for these two systems. Then
$$
Y(t) \le \ol{Y}(t),\ t \ge 0.
$$
\label{increaseqCBP}
\end{cor}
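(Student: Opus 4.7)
The plan is to reduce immediately to Theorem~\ref{main2}, of which this corollary is essentially a specialization. Recall that by Definition~\ref{CBPdef}, each of the systems $Y$ and $\ol Y$ is, for each sample point $\oa \in \Oa$, a system of competing particles in the sense of Definition~\ref{generalDef} with driving function
$$
\CX_k(t) = y_k + g_kt + \si_kB_k(t), \qquad k = 1, \ldots, N, \ t \ge 0,
$$
and with parameters of collision $(q^{\pm}_k)_{1 \le k \le N}$ or $(\ol q^{\pm}_k)_{1 \le k \le N}$, respectively. Since the drifts $g_k$, diffusions $\si_k$, initial data $Y(0) = \ol Y(0)$, and driving Brownian motions $B_1, \ldots, B_N$ are shared between the two systems, the two driving functions coincide: $\CX = \ol\CX$ pathwise.

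Consequently, along every sample path we trivially have $\CX(0) \le \ol\CX(0)$ (with equality) and $\CX(t) - \CX(s) \le \ol\CX(t) - \ol\CX(s)$ for all $0 \le s \le t$, so the hypotheses on the driving functions in Theorem~\ref{main2} are satisfied. Moreover, the assumption $\ol q^+_n \ge q^+_n$ for $n = 1, \ldots, N$ implies in particular the required inequality $q^+_n \le \ol q^+_n$ for $n = 2, \ldots, N$. Thus Theorem~\ref{main2} applies pathwise and yields $Y(t, \oa) \le \ol Y(t, \oa)$ for every $t \ge 0$ and every $\oa \in \Oa$, which is the desired conclusion.

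No genuine obstacle arises: all of the substance is in Theorem~\ref{main2}, and the only thing to verify is that the Brownian setting fits its deterministic hypotheses. The one small point worth checking is that the hypothesis of Theorem~\ref{main2} concerning $q^+_n$ is assumed only for $n = 2, \ldots, N$ (since $q^+_1$ plays no role through the convention $L_{(0,1)} \equiv 0$), while the corollary's hypothesis covers $n = 1, \ldots, N$; the former is a strictly weaker condition, so the application is immediate.
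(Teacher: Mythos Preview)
Your proof is correct and follows exactly the same approach as the paper: observe that both systems share the common driving function $X_k(t) = Y_k(0) + g_kt + \si_kB_k(t)$, so the hypotheses of Theorem~\ref{main2} are satisfied pathwise with $X = \ol{X}$, and the conclusion follows immediately. Your remark that Theorem~\ref{main2} only needs the comparison $q^+_n \le \ol q^+_n$ for $n = 2, \ldots, N$ is a nice observation not made explicit in the paper.
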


\begin{proof} Let $B_1, \ldots, B_N$ be the driving Brownian motions for these systems. Then $Y$ and $\ol{Y}$ are systems of competing particles with parameters of collision $(q^{\pm}_n)_{1 \le n \le N}$, $(\ol{q}^{\pm}_n)_{1 \le n \le N}$, and the same driving function
$$
X(t) = \left(Y_1(0) + g_1t + \si_1 B_1(t), \ldots, Y_N(0) + g_Nt + \si_NB_N(t)\right)'.
$$
Apply Theorem~\ref{main2} and finish the proof.
\end{proof}

The following corollary shows how to use the drift coefficients for comparison. 

\begin{cor} Consider two systems $Y$ and $\ol{Y}$ of $N$ competing Brownian particles with common diffusion coefficients $(\si_k^2)_{1 \le k \le N}$ and parameters of collision $(q^{\pm}_n)_{1 \le n \le N}$, but with different drift coefficients $(g_n)_{1 \le n \le N}$, $(\ol{g}_n)_{1 \le n \le N}$. Suppose $Y(0) = \ol{Y}(0)$ and the driving Brownian motions are the same for these two systems. Let $Z$ and $\ol{Z}$ be the corresponding gap processes, and let $L$ and $\ol{L}$ be the corresponding vectors of collision terms. 

\medskip

(i) If $g_k \le \ol{g}_k,\ k = 1,\ldots, N$, then $Y(t) \le \ol{Y}(t),\ t \ge 0$. 

\medskip

(ii) If $g_{k+1} - g_k \le \ol{g}_{k+1} - \ol{g}_k,\ k = 1, \ldots, N-1$, then 
$$
Z(t) \le \ol{Z}(t),\ t \ge 0;\ \ L(t) - L(s) \ge \ol{L}(t) - \ol{L}(s),\ 0 \le s \le t.
$$
\end{cor}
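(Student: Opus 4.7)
The plan is to reduce both parts of the corollary to results already established in the excerpt, by expressing each system of competing Brownian particles as a system of competing particles with a specific continuous driving function and then invoking the appropriate comparison theorem. Concretely, since the driving Brownian motions $B_1,\ldots,B_N$ are shared and $Y(0)=\ol Y(0)=:y$, we rewrite $Y$ and $\ol Y$ as systems of competing particles (in the sense of Definition~\ref{generalDef}) with driving functions
\[
X_k(t)=y_k+g_k t+\si_k B_k(t),\qquad \ol X_k(t)=y_k+\ol g_k t+\si_k B_k(t),\qquad k=1,\ldots,N,
\]
and common parameters of collision $(q^\pm_n)_{1\le n\le N}$. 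The proof is then essentially a matter of checking the monotonicity hypotheses of Theorem~\ref{main2} (for (i)) and of Corollary~\ref{cor4} (for (ii)) pathwise.

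For part (i), under the assumption $g_k\le\ol g_k$, for any $0\le s\le t$,
\[
\ol X_k(t)-\ol X_k(s)-\bigl(X_k(t)-X_k(s)\bigr)=(\ol g_k-g_k)(t-s)\ge 0,
\]
and $X(0)=\ol X(0)=y\in\mathcal W_N$. Since the parameters of collision coincide (so trivially $q^+_n\le q^+_n$), Theorem~\ref{main2} applies and yields $Y(t)\le\ol Y(t)$ for all $t\ge 0$.

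For part (ii), form the driving functions for the gap processes as in~\eqref{defofW}:
\[
W_k=X_{k+1}-X_k,\qquad \ol W_k=\ol X_{k+1}-\ol X_k,\qquad k=1,\ldots,N-1.
\]
The Brownian contributions cancel out in $\ol W_k-W_k$, giving
\[
\ol W_k(t)-\ol W_k(s)-\bigl(W_k(t)-W_k(s)\bigr)=\bigl[(\ol g_{k+1}-\ol g_k)-(g_{k+1}-g_k)\bigr](t-s)\ge 0,
\]
by hypothesis, while $W(0)=\ol W(0)$ since $Y(0)=\ol Y(0)$. Hence the pair $(W,\ol W)$ satisfies the hypotheses of Corollary~\ref{cor4}, which immediately delivers the desired gap and collision-term inequalities.

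There is essentially no obstacle: everything reduces to algebraically checking the monotone-increment hypotheses required by the two general comparison results, which hold because the common driving Brownian motions cancel in all relevant differences. The mild subtlety worth noting is that part (ii) is strictly weaker than part (i), requiring only monotonicity of the \emph{differences} of drifts rather than the drifts themselves, and this is exactly why one must feed the pair $(W,\ol W)$ into Corollary~\ref{cor4} rather than attempting to use Theorem~\ref{main2} on $Y$ directly.
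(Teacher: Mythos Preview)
Your proof is correct and follows essentially the same approach as the paper: you define the same driving functions $X$ and $\ol X$, then invoke Theorem~\ref{main2} for part (i) and Corollary~\ref{cor4} for part (ii), after verifying the increment-monotonicity hypotheses. Your write-up is in fact more explicit than the paper's (which simply states the increment inequality and cites the two results), but the underlying argument is identical.
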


\begin{proof} Let $B_1, \ldots, B_N$ be the driving Brownian motions for these systems. Then $Y$ and $\ol{Y}$ are systems of competing particles with parameters of collision $(q^{\pm}_n)_{1 \le n \le N}$ and driving functions 
$$
X(t) = \left(Y_1(0) + g_1t + \si_1 B_1(t), \ldots, Y_N(0) + g_Nt + \si_NB_N(t)\right)',
$$
$$
\ol{X}(t) = \left(Y_1(0) + \ol{g}_1t + \si_1 B_1(t), \ldots, Y_N(0) + \ol{g}_Nt + \si_NB_N(t)\right)'.
$$

\medskip

(i) We have: $X(t) - X(s) \le \ol{X}(t) - \ol{X}(s),\ 0 \le s \le t$, and $X(0) = \ol{X}(0)$. Apply Theorem~\ref{main2} and finish the proof.

\medskip

(ii) This statement immediately follows from Corollary~\ref{cor4}. 
\end{proof}

In each of the last five corollaries, if we remove the requirement that the driving Brownian motions must be the same, then we get stochastic  comparison instead of pathwise comparison. 

We can also compare stationary distributions for gap processes of systems of competing Brownian particles. From previous research, \cite{BFK2005, Ichiba11, KPS2012}, \cite[Section 2]{MyOwn6}, we know that if a stationary distribution exists, then it is unique. Let us give one result, which is used in \cite{MyOwn6}. 

\begin{cor}
\label{cor:comparison-of-SD}
Fix $1 \le N_1 < N_2 \le M$. Take a system $Y$ of $M$ competing Brownian particles with parameters $(g_k)_{1 \le k \le M}$, $(\si_k^2)_{1 \le k \le M}$, $(q^{\pm}_k)_{1 \le k \le M}$. Take another system $\ol{Y} = (\ol{Y}_{N_1}, \ldots, \ol{Y}_{N_2})'$ of $N_2 - N_1 + 1$ competing Brownian particles with parameters $(g_k)_{N_1 \le k \le N_2}$, $(\si_k^2)_{N_1 \le k \le N_2}$, $(q^{\pm}_k)_{N_1 \le k \le N_2}$. If both systems have stationary distributions $\pi$ and $\ol{\pi}$ for their gap processes: $Z$ and $\ol{Z}$, and if 
$$
(z_1, \ldots, z_M)' \sim \pi,\ \ (\ol{z}_{N_1}, \ldots, \ol{z}_{N_2})' \sim \ol{\pi},
$$
then
$$
(z_{N_1}, \ldots, z_{N_2})' \preceq (\ol{z}_{N_1}, \ldots, \ol{z}_{N_2})'.
$$
\end{cor}

\begin{proof} Consider versions of the systems $Y$ and $\ol{Y}$ when all particles start from zero, and assume they have the same driving Brownian motions, as in Corollary~\ref{removalCBP2}. From this Corollary~\ref{removalCBP2}, we have: 
$$
\left(Z_{N_1}(t), \ldots, Z_{N_2}(t)\right)' \le \left(\ol{Z}_{N_1}(t), \ldots, \ol{Z}_{N_2}(t)\right)',\ \ t \ge 0.
$$
From \cite[Section 2]{MyOwn6}, we have: 
$$
Z(t) \Ra \pi\ \ \mbox{and}\ \ \ol{Z}(t) \Ra \ol{\pi}\ \ \mbox{as}\ \ t \to \infty.
$$
The rest is trivial.
\end{proof}

\section{Proofs of Theorems~\ref{main1} and~\ref{main2}}

\subsection{Outline of the proofs} We prove Theorems~\ref{main1} and~\ref{main2} by approximating the general continuous driving functions by ``simple'' functions, which are defined as follows. 

\begin{defn} A continuous function $f : [0, T] \to \BR^d$ is called {\it regular} if it is piecewise linear with each piece parallel to one of the coordinate axes; that is, if there exist a partition 
$0 = t_0 < t_1 < \ldots < t_N = T$ and numbers $\al_1, \ldots, \al_N \in \BR$, $j_1, \ldots, j_N \in \{1, \ldots, d\}$ such that for $k = 1, \ldots, N$, we have:
$$
f(t) = f\left(t_{k-1}\right) + \al_ke_{j_k}(t - t_{k-1}),\ t_{k-1} \le t \le t_k.
$$
Two regular functions $f$ and $\ol{f}$ are called {\it coupled} if the partition $t_0, \ldots, t_N$ and the indices $j_1,\ldots, j_N$ are the same for them. 
\label{defRRegular}
\end{defn} 

We make three observations:

\medskip

(i) Any continuous function $X : [0, T] \to \BR^d$ can be uniformly approximated by regular functions. This is proved in Lemma~\ref{regular}. Moreover, we show that a pair of continuous functions $X$ and $\ol{X}$ which satisfy~\eqref{maincomparison} can be uniformly on $[0, T]$ approximated by a pair of coupled regular functions so that within each pair two regular functions also satisfy~\eqref{maincomparison}.  This is proved in Lemma~\ref{regular1}. 

\medskip

(ii) All the objects we are considering in this article (the solution to the Skorohod problem in the orthant, boundary terms in the Skorohod problem, the system of competing particles, the gap process, the vector of collision terms) continuously depend on the corresponding driving functions; see Proposition~\ref{existenceSRBM}(ii) and Lemma~\ref{existenceCP}(ii). Thus, we can prove Theorems~\ref{main1} and~\ref{main2} just for regular driving functions. 

\medskip

(iii) In Lemmas~\ref{memorylessSP} and~\ref{memorylessCP}, we show that solutions to the Skorohod problem and systems of competing particles are ``memoryless'': if you take a moment $t > 0$, then their behavior after this moment depends only on their current position and future dynamics of the driving function. This is very similar to Markov property (although the concepts of the Skorohod problem and competing particles are deterministic, not random).  This allows us to consider driving regular functions (and the solutions) piece by piece. 

\medskip

The goal of these three observations is Lemma~\ref{spanning}. It shows that Theorems~\ref{main1} and~\ref{main2} can be reduced to the case when the driving functions are not just piecewise linear, but exactly linear, with the directional vector parallel to one of the axes. And since they are coupled, this axis is the same for both functions. That is, we can consider 
\begin{equation}
\label{reglin}
X(t) = x + \al e_it,\ \ol{X}(t) = \ol{x} + \ol{\al}e_it,
\end{equation}
where $\al, \ol{\al} \in \BR$, $i = 1,\ldots, d$. The condition~\eqref{maincomparison} for these functions is equivalent to
\begin{equation}
\label{compreglin}
x \le \ol{x},\ \ \al \le \ol{\al}.
\end{equation}

But for regular linear driving functions as in~\eqref{reglin}, we can actually solve the Skorohod problem explicitly, and find the solution $Z$ and and the vector of boundary terms $L$ in exact form. This is done in Lemma~\ref{explicitSkorohod}. We can do the same for the system $Y$ of competing particles: Lemma~\ref{explicitCP}. Then we can manually compare the solutions $Z$ and $\ol{Z}$ of the Skorohod problem, and the vectors $L$ and $\ol{L}$ of boundary terms, or (if we are considering systems of competing particles) $Y$ and $\ol{Y}$. This completes the proof of Theorems~\ref{main1} and~\ref{main2}. 

The rest of this section is organized as follows. 

In subsection 4.2, we state and prove the technical results mentioned above: (i) approximation of continuous driving functions by regular functions; (ii) continuous dependence on driving functions (actually, these are already stated above as Proposition~\ref{existenceSRBM}(ii) and Lemma~\ref{existenceCP}(ii)); (iii) the memoryless property. In subsection 4.3, we explicitly solve the Skorohod problem for regular driving functions in Lemma~\ref{explicitSkorohod} and find the solution together with the boundary terms. In subsection 4.4, we do the same for a system of competing particles in Lemma~\ref{explicitCP}. In subsections 4.5 and 4.6, we prove Theorems~\ref{main1} and~\ref{main2} for regular linear driving functions. This completes the proof. 

\subsection{Auxillary results} {\it Observation (i): approximation by regular driving functions.}  

\begin{lemma} Fix $T \ge 0$ and take a continuous function $X : [0, T] \to \BR^d$. Then there exists a sequence $(X^{(n)})_{n \ge 1}$ of regular functions $[0, T] \to \BR^d$ which uniformly converges to $X$ on $[0, T]$. 
\label{regular}
\end{lemma}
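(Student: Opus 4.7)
The plan is to build each $X^{(n)}$ by interpolating $X$ at a fine partition, using coordinate-parallel segments to connect consecutive interpolation points. Concretely, for each $n$ pick a partition $0 = s_0^{(n)} < s_1^{(n)} < \ldots < s_{M_n}^{(n)} = T$ whose mesh tends to $0$. On each subinterval $[s_{m-1}^{(n)}, s_m^{(n)}]$, I want $X^{(n)}$ to start at $X(s_{m-1}^{(n)})$ and end at $X(s_m^{(n)})$, which forces $X^{(n)}$ to agree with $X$ at all the knots and therefore, at those knots, to approximate $X$ exactly.

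To connect two points $p, q \in \BR^d$ by a regular path, I subdivide the subinterval $[s_{m-1}^{(n)}, s_m^{(n)}]$ into $d$ equal pieces and define intermediate vertices $v_0 = p, v_1, \ldots, v_d = q$ by $v_i := v_{i-1} + (q_i - p_i)e_i$; on the $i$-th piece of length $(s_m^{(n)} - s_{m-1}^{(n)})/d$, I let $X^{(n)}$ move linearly from $v_{i-1}$ to $v_i$ at constant speed in direction $e_i$. Concatenating these pieces across all $m$ and refining the partition into $dM_n$ pieces produces the required regular function in the sense of Definition~\ref{defRRegular}: each piece is parallel to a coordinate axis, and the partition together with the index sequence $(j_1, \ldots, j_{dM_n}) = (1,2,\ldots,d,1,2,\ldots,d,\ldots)$ is well defined.

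For uniform convergence, fix $\eps > 0$. By uniform continuity of $X$ on $[0,T]$, there is $\de > 0$ with $\lVert X(t) - X(s)\rVert \le \eps$ whenever $|t - s| \le \de$. Choose $n$ so that $\max_m (s_m^{(n)} - s_{m-1}^{(n)}) \le \de$. The key observation is that for any intermediate vertex $v_i$ on the subinterval $[s_{m-1}^{(n)}, s_m^{(n)}]$, the coordinatewise differences from $v_0 = X(s_{m-1}^{(n)})$ are either $0$ or equal to the corresponding coordinate of $X(s_m^{(n)}) - X(s_{m-1}^{(n)})$, so $\lVert v_i - X(s_{m-1}^{(n)})\rVert \le \lVert X(s_m^{(n)}) - X(s_{m-1}^{(n)})\rVert$ (comparing Euclidean norms coordinatewise). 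Since $X^{(n)}(t)$ lies on the segment $[v_{i-1}, v_i]$ for some $i$, the triangle inequality yields
$$
\lVert X^{(n)}(t) - X(t)\rVert \le \lVert X^{(n)}(t) - X(s_{m-1}^{(n)})\rVert + \lVert X(s_{m-1}^{(n)}) - X(t)\rVert \le 2\lVert X(s_m^{(n)}) - X(s_{m-1}^{(n)})\rVert + \eps \le 3\eps,
$$
uniformly in $t \in [0, T]$. Letting $\eps \to 0$ gives uniform convergence.

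I expect no serious obstacle here; the only care needed is in bounding $\lVert v_i - v_0\rVert$ by $\lVert v_d - v_0\rVert$, which uses the fact that the coordinate-by-coordinate updates only introduce new nonzero entries (or zero them out) and do not overshoot. The choice of equal-length subpieces is a convenience; any assignment of positive lengths works.
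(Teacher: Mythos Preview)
Your proof is correct and follows essentially the same approach as the paper: both constructions interpolate $X$ at a partition whose mesh tends to zero, connect consecutive knots by changing one coordinate at a time over $d$ equal sub-subintervals, and then use uniform continuity of $X$ to obtain a uniform bound of the form $C\eps$ on each subinterval. The only cosmetic differences are that the paper fixes the uniform partition $s_k = kT/n$ and obtains the slightly sharper constant $2$ instead of your $3$.
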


\begin{proof} First, let us show how to define $X^{(1)}$, the first function from this sequence. Let 
$$
t_i := \frac{Ti}d,\ i = 0, \ldots, d. 
$$
Split the interval $[0, T]$ into $d$ subintervals with equal length:
$I_i := \left[t_{i-1}, t_i\right]$, $i = 1,\ldots, d$. 
On the $i$th subinterval $I_i$, define the function $X^{(1)}$ as follows: 
$$
X^{(1)}(t) = X^{(1)}\left(t_{i-1}\right) + \left(X_i\left(T\right) - X_i\left(0\right)\right)\frac{t - t_{i-1}}{t_i - t_{i-1}}e_i,\ i = 1, \ldots, d,\ \ t_{i-1} \le t \le t_i.
$$
Then $X^{(1)}(0) = X(0)$ and $X^{(1)}(T) = X(T)$. During the time interval $I_i$, only the $i$th component of the function $X^{(1)}$ is changing; other components stay constant. The $i$th component $X^{(1)}_i$ is moving between $X_i(0)$ and $X_i(T)$. So
$$
\left|X_i^{(1)}(t) - X_i(0)\right| \le \left|X_i(T) - X_i(0)\right|,\ \ t \in [0, T].
$$
Therefore, 
$$
\norm{X^{(1)}(t) - X(0)} \le \norm{X(T) - X(0)},\ \ t \in [0, T],
$$
and
$$
\norm{X^{(1)}(t) - X(t)} \le \norm{X(T) - X(0)} + \max\limits_{0 \le t \le T}
\norm{X(t) - X(0)} \le 2\max\limits_{0 \le t \le T}
\norm{X(t) - X(0)}.
$$
Now, let us show how to define $X^{(n)}$ for each $n = 1, 2, \ldots$ 
Let $s_k := kT/n,\ k = 0, \ldots, n$. Split $[0, T]$ into $n$ equal subintervals $J_k = [s_{k-1}, s_k],\ k = 1, \ldots, n$. Perform the same construction of $X^{(1)}$ for each of these small subintervals in place of $[0, T]$. Then we get a continuous function $X^{(n)}$ on $[0, T]$ such that
$$
X^{(n)}(s_k) = X(s_k),\ \ k = 0, \ldots, n.
$$
For $t \in J_k$, we have:
$$
\norm{X^{(n)}(t) - X(t)} \le 2\!\!\max\limits_{s_{k-1} \le t \le s_k}\norm{X(t) - X\left(s_{k-1}\right)}. 
$$
Therefore, 
\begin{equation}
\label{5467}
\max\limits_{0 \le t \le T}\norm{X^{(n)}(t) - X(t)} \le 2\max\limits_{k=1, \ldots, n}\max\limits_{s_{k-1} \le t \le s_k}\norm{X\bigl(t\bigr) - X\bigl((k-1)T/n\bigr)}.
\end{equation}
But the function $X$ is uniformly continuous on $[0, T]$. Therefore, the right-hand side of~\eqref{5467} tends to zero as $n \to \infty$. Thus, the sequence of regular functions $(X^{(n)})_{n \ge 1}$ uniformly converges to $X$. 
\end{proof}

We will call the sequence constructed in Lemma~\ref{regular} {\it the standard approximating sequence}. 

\begin{lemma} Fix $T \ge 0$ and take two continuous functions $X, \ol{X} : [0, T] \to \BR^d$ such that 
$$
X(0) \le \ol{X}(0);\ \ X(t) - X(s) \le \ol{X}(t) - \ol{X}(s),\ \ 0 \le s \le t \le T.
$$
Then there exist two sequences $(X^{(n)})_{n \ge 1},\ (\ol{X}^{(n)})_{n \ge 1}$ of regular functions $[0, T] \to \BR^d$ such that:

\medskip

(i) $X^{(n)} \to X,\ \ol{X}^{(n)} \to \ol{X}$ uniformly on $[0, T]$ as $n \to \infty$;

\medskip

(ii) for every $n \ge 1$, the functions $X^{(n)}$ and $\ol{X}^{(n)}$ are coupled; 

\medskip

(iii) $X^{(n)}(0) \le \ol{X}^{(n)}(0)$ and $X^{(n)}(t) - X^{(n)}(s) \le \ol{X}^{(n)}(t) - \ol{X}^{(n)}(s)$ for all $0 \le s \le t \le T$. 
\label{regular1}
\end{lemma}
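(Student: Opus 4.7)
The plan is to run the construction from Lemma~\ref{regular} on $X$ and $\ol{X}$ in lockstep: use a common partition and choose the same sequence of axis indices for both pieces, so that property (ii) is built in and (i) follows immediately from the same telescoping estimate as in Lemma~\ref{regular}. Specifically, for each $n\ge 1$ I split $[0,T]$ into $n$ equal macro-intervals $J_k = [s_{k-1}, s_k]$ with $s_k = kT/n$, and I subdivide each $J_k$ into $d$ equal micro-pieces. On the $i$th micro-piece of $J_k$, only the $i$th coordinate of the approximant is allowed to move, linearly interpolating $X_i(s_{k-1})$ to $X_i(s_k)$ in the case of $X^{(n)}$ and $\ol{X}_i(s_{k-1})$ to $\ol{X}_i(s_k)$ in the case of $\ol{X}^{(n)}$. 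The resulting pair $(X^{(n)}, \ol{X}^{(n)})$ is by construction a pair of coupled regular functions agreeing with $X$ and $\ol{X}$ at every node $s_k$, and uniform convergence on $[0,T]$ then follows verbatim from the argument leading to~\eqref{5467}.

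For property (iii), the equality $X^{(n)}(0)=X(0)\le \ol{X}(0)=\ol{X}^{(n)}(0)$ is immediate. To compare increments, I would observe that on the $i$th micro-piece of $J_k$ the approximant $X^{(n)}$ has constant velocity vector $\frac{nd}{T}\bigl(X_i(s_k)-X_i(s_{k-1})\bigr)e_i$, and $\ol{X}^{(n)}$ has velocity $\frac{nd}{T}\bigl(\ol{X}_i(s_k)-\ol{X}_i(s_{k-1})\bigr)e_i$. Since $s_{k-1}\le s_k$, the hypothesis on increments of $X,\ol{X}$ gives, coordinatewise, that the velocity of $X^{(n)}$ is bounded above by the velocity of $\ol{X}^{(n)}$ on every micro-piece; on non-$i$ coordinates both velocities vanish on that piece. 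Integrating this pointwise inequality between piecewise-constant velocity functions over any subinterval $[s,t]\subseteq [0,T]$ yields
\begin{equation*}
X^{(n)}(t) - X^{(n)}(s) \;\le\; \ol{X}^{(n)}(t) - \ol{X}^{(n)}(s),
\end{equation*}
which is exactly (iii).

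The one potentially delicate point, and the main obstacle, is establishing (iii) at arbitrary $s\le t$ rather than only at partition nodes, where it is trivial. The velocity-integration formulation above sidesteps any case analysis for partial pieces and makes transparent why it is essential that the two approximations share both the same partition \emph{and} the same axis sequence: only under this coupling do the velocities of $X^{(n)}$ and $\ol{X}^{(n)}$ act in the same coordinate direction on each micro-piece, so that a single scalar inequality on the relevant coordinate controls the full vector inequality.
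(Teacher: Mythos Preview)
Your proposal is correct and follows essentially the same route as the paper: construct the standard approximating sequences of Lemma~\ref{regular} simultaneously for $X$ and $\ol{X}$ using the common partition and axis sequence, so that (i) and (ii) are inherited, and then check (iii) by comparing the constant slopes on each micro-piece via the hypothesis $X_i(s_k)-X_i(s_{k-1})\le \ol{X}_i(s_k)-\ol{X}_i(s_{k-1})$. Your velocity-integration formulation for arbitrary $s\le t$ is actually more explicit than the paper's terse justification, which only records the inequality at partition nodes and leaves the extension to general $s,t$ implicit.
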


\begin{proof} Construct two standard approximating sequences as in the proof of Lemma~\ref{regular}. Let us show that 
$$
X^{(1)}(t) - X^{(1)}(s) \le \ol{X}^{(1)}(t) - \ol{X}^{(1)}(s),\ 0 \le s \le t.
$$
Indeed, $X^{(1)}$ and $\ol{X}^{(1)}$ are linear on each $[(k-1)T/d, kT/d]$, and 
$$
X^{(1)}\left(\frac{kT}d\right) - X^{(1)}\left(\frac{(k-1)T}d\right) \le \ol{X}^{(1)}\left(\frac{kT}d\right) - \ol{X}^{(1)}\left(\frac{(k-1)T}d\right).
$$
The proof is similar for $X^{(n)}$ and $\ol{X}^{(n)}$ instead of $X^{(1)}$ and $\ol{X}^{(1)}$. 
\end{proof}

\medskip

{\it Observation (ii): continuous dependence.} We have the following continuity results: Proposition~\ref{existenceSRBM} and Lemma~\ref{existenceCP}. In addition, we have approximation results: Lemmata~\ref{regular} and~\ref{regular1}. These allow us to substantially narrow the class of driving functions. Let us state this as a separate lemma.

\begin{lemma} If Theorems~\ref{main1} and~\ref{main2} are true for coupled regular driving functions, then they are true in the general case.
\label{intermediate}
\end{lemma}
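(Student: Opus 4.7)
The plan is to exploit the three preparatory ingredients already in place: the approximation result (Lemma~\ref{regular1}), the continuity of the Skorohod solution map and of the competing-particle map with respect to the driving function (Lemmas~\ref{cont} and~\ref{Lip}), and the assumed validity of Theorems~\ref{main1} and~\ref{main2} on the restricted class of coupled regular driving functions. The structure is a standard approximation-and-limit argument, carried out separately (but identically in style) for the two theorems.

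First I would fix an arbitrary horizon $T > 0$ and consider a pair $X, \ol X : \BR_+ \to \BR^d$ (resp.\ $\to\BR^N$) satisfying~\eqref{maincomparison}. Applying Lemma~\ref{regular1} on $[0,T]$ produces two sequences $(X^{(n)})_{n \ge 1}$ and $(\ol X^{(n)})_{n \ge 1}$ of coupled regular functions that converge uniformly to $X$ and $\ol X$ on $[0,T]$ and satisfy $X^{(n)}(0) \le \ol X^{(n)}(0)$ together with $X^{(n)}(t)-X^{(n)}(s) \le \ol X^{(n)}(t)-\ol X^{(n)}(s)$ for all $0 \le s \le t \le T$. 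Thus each pair $(X^{(n)}, \ol X^{(n)})$ satisfies the hypotheses of the theorem in question for the same matrices $R,\ol R$ (for Theorem~\ref{main1}) or the same parameters of collision (for Theorem~\ref{main2}), and so by assumption the conclusions hold at the $n$th level.

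Next I would pass to the limit. For Theorem~\ref{main1}, Lemma~\ref{cont} applied with reflection matrix $R$ (resp.\ $\ol R$) gives that $Z^{(n)} \to Z$ and $L^{(n)} \to L$ (resp.\ $\ol Z^{(n)} \to \ol Z$ and $\ol L^{(n)} \to \ol L$) uniformly on $[0,T]$. Taking the pointwise limit in
\[
Z^{(n)}(t) \le \ol Z^{(n)}(t), \qquad L^{(n)}(t) - L^{(n)}(s) \ge \ol L^{(n)}(t) - \ol L^{(n)}(s), \qquad 0 \le s \le t \le T,
\]
yields the desired inequalities on $[0,T]$, and since $T$ was arbitrary they hold for all $t \ge s \ge 0$. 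For Theorem~\ref{main2}, the identical scheme is used, except that Lemma~\ref{Lip} replaces Lemma~\ref{cont}: the parameters of collision are held fixed at $(q^{\pm}_n)$ for the $Y^{(n)}$ sequence and at $(\ol q^{\pm}_n)$ for the $\ol Y^{(n)}$ sequence, so that $Y^{(n)} \to Y$ and $\ol Y^{(n)} \to \ol Y$ uniformly on $[0,T]$, and the pointwise inequality $Y^{(n)}(t) \le \ol Y^{(n)}(t)$ survives the limit.

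There is no real obstacle in this reduction; everything needed has been isolated into the lemmas cited above. The only mild subtlety worth flagging is that Theorem~\ref{main1} involves two distinct reflection matrices $R$ and $\ol R$, so one must apply the continuity lemma twice (once for each matrix) rather than expecting continuity jointly in $(R, X)$; Lemma~\ref{cont} is exactly the statement needed. Similarly, for Theorem~\ref{main2} one invokes Lemma~\ref{Lip} separately at the two parameter choices. Once this is noted, the proof is essentially a single line: approximate, apply the hypothesis, and pass to the uniform limit, using the preservation of non-strict componentwise inequalities under pointwise limits.
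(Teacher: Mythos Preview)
Your proposal is correct and is exactly the argument the paper has in mind: the paper does not spell out a proof of Lemma~\ref{intermediate} at all, simply stating that ``these continuity results, together with the approximation results (Lemma~\ref{regular} and Corollary~\ref{regular1}), allow us to substantially narrow the class of driving functions,'' and then recording the conclusion as a lemma. You have supplied precisely the intended approximation-and-limit argument, and your remark about applying the continuity lemma separately for each reflection matrix (resp.\ each set of collision parameters) is a useful clarification.
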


\medskip

{\it Observation (iii): memoryless property.} This allows us to further narrow the scope of driving functions: to take coupled regular {\it linear} driving functions. 

\begin{lemma} Fix $d  \ge 1$. Take a continuous function $X : \BR_+ \to \BR^d$ with $X(0) \in S = \BR^d_+$ and a $d\times d$-reflection nonsingular matrix $R$. Let $Z$ be the solution of the Skorohod problem in $S$ with reflection matrix $R$ and driving function $X$. Let $L$ be the vector of boundary terms. Fix $T \ge 0$. For $t \ge 0$, let 
$$
X_T(t) = X(T + t) - X(T) + Z(T),
$$
$$
L_T(t) = L(T + t) - L(T),\ Z_T(t) = Z(T + t).
$$
Then $Z_T$ is the solution of the Skorohod problem with reflection matrix $R$ and driving function $X_T$, and $L_T$ is the corresponding vector of boundary terms. 
\label{memorylessSP}
\end{lemma}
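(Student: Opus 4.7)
The plan is to directly verify that the triple $(X_T, Z_T, L_T)$ satisfies the four defining properties of a solution to the Skorohod problem in $S$ with reflection matrix $R$ and driving function $X_T$. The whole argument is a mechanical check: the defining properties of the Skorohod problem are preserved under translation of the time origin, and the ``initial condition'' $Z(T)$ is absorbed into the new driving function by construction.

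First I would dispose of the easy items. Continuity of $X_T$, $Z_T$, and $L_T$ is immediate from continuity of $X$, $Z$, and $L$. By construction $X_T(0) = Z(T) \in S$, so the driving function starts in the orthant, and $Z_T(t) = Z(T+t) \in S$ because $Z$ takes values in $S$. Finally, $L_T(0) = L(T) - L(T) = 0$, and each component $L_{T,i}(t) = L_i(T+t) - L_i(T)$ is nondecreasing because $L_i$ is.

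Next I would verify the Skorohod decomposition $Z_T(t) = X_T(t) + R L_T(t)$ by direct substitution, using the original identity $Z(T) = X(T) + R L(T)$ to cancel the ``initial'' terms:
\begin{align*}
X_T(t) + R L_T(t)
&= \bigl(X(T+t) - X(T) + Z(T)\bigr) + R\bigl(L(T+t) - L(T)\bigr) \\
&= X(T+t) + R L(T+t) + \bigl(Z(T) - X(T) - R L(T)\bigr) \\
&= X(T+t) + R L(T+t) \;=\; Z(T+t) \;=\; Z_T(t).
\end{align*}

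The last property to check is that each $L_{T,i}$ can increase only when $Z_{T,i}(t) = 0$. Changing variables $s = T + t$ in the Stieltjes integral gives
$$
\int_0^{\infty} Z_{T,i}(t)\,\md L_{T,i}(t) \;=\; \int_T^{\infty} Z_i(s)\,\md L_i(s) \;\le\; \int_0^{\infty} Z_i(s)\,\md L_i(s) \;=\; 0,
$$
so this integral is zero, which is the required complementarity condition. I do not anticipate any real obstacle; the only point requiring a little care is using the Skorohod equation at the single time $T$ to cancel $X(T)$ against $R L(T)$ in the shifted equation.
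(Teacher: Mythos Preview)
Your proof is correct and follows the same approach as the paper: direct verification of the defining properties of the Skorohod problem. The paper's proof is terser, explicitly writing out only the check of the decomposition $Z_T(t) = X_T(t) + R L_T(t)$ (using $Z(T+t) = X(T+t) + R L(T+t)$ together with $Z(T) = X(T) + R L(T)$) and leaving the remaining routine verifications implicit, whereas you spell them all out.
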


\begin{proof} It suffices to check the definition: first, we need to prove that
$$
Z_T(t) = X_T(t) + RL_T(t),\ t \ge 0.
$$
This follows from 
$$
Z(t + T) = X(t + T) + RL(t + T)\ \ \mbox{and}\ \ Z(T) = X(T) + RL(T).
$$
Also, we need to mention that $L_T = ((L_T)_1, \ldots, (L_T)_d)'$ has each component $(L_T)_i$, $i = 1,\ldots, d$, nondecreasing, $(L_T)_i(0) = 0$, and 
$$
\int_0^{\infty}(Z_T)_i(t)\md(L_T)_i(t) = \int_0^{\infty}Z_i(T+t)\md L_i(T+t) = 
\int_T^{\infty}Z_i(s)\md L_i(s) = 0.
$$
\end{proof}

Let us state a similar property for systems of competing particles. The proof is similar to the previous one and is omitted. 

\begin{lemma} Fix $N \ge 2$. Assume $Y$ is a system of $N$ competing particles with driving function $X : \BR_+ \to \BR^N$ and parameters of collision $(q^{\pm}_n)_{1 \le n \le N}$. Let $L$ be the corresponding vector of collision terms. Fix $T \ge 0$. For $t \ge 0$, let 
$$
X_T(t) = X(T + t) - X(T) + Y(T),
$$
$$
L_T(t) = L(T + t) - L(T),\ Y_T(t) = Y(T + t).
$$
Then the function $Y_T$ is a system of $N$ competing particles with driving function $X_T$ and the same parameters of collision, and $L_T$ is the corresponding vector of collision terms. 
\label{memorylessCP}
\end{lemma}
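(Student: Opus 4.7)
The plan is to verify directly that $Y_T$ and $L_T$ satisfy each of the four defining properties (i)--(iv) of Definition~\ref{generalDef} with driving function $X_T$ and the same parameters of collision. Since $Y(T) \in \mathcal W_N$, we automatically have $Y_T(0) = Y(T) \in \mathcal W_N$, and by construction $X_T(0) = Y(T) \in \mathcal W_N$ as well, so the initial data are admissible.

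For property (i), I would start from the identity that $Y$ satisfies, namely
$$
Y_k(T+t) = X_k(T+t) + q^+_k L_{(k-1,k)}(T+t) - q^-_k L_{(k,k+1)}(T+t),
$$
evaluate this at $t=0$, and subtract. After adding $Y_k(T)$ to both sides, the definitions of $X_T$ and $L_T$ precisely yield $Y_{T,k}(t) = X_{T,k}(t) + q^+_k L_{T,(k-1,k)}(t) - q^-_k L_{T,(k,k+1)}(t)$. Properties (ii) and (iii) are immediate from $L_{T,(k,k+1)}(0) = L_{(k,k+1)}(T) - L_{(k,k+1)}(T) = 0$ and the fact that a shift of a nondecreasing function by its value at a fixed point remains nondecreasing.

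For property (iv), the measure $\md L_{T,(k,k+1)}$ on $[0,\infty)$ is just the pushforward of $\md L_{(k,k+1)}$ restricted to $[T, \infty)$ under the shift $s \mapsto s - T$. Hence the support of $\md L_{T,(k,k+1)}$ is contained in $\{t \ge 0 : Y_k(T+t) = Y_{k+1}(T+t)\} = \{t \ge 0 : Y_{T,k}(t) = Y_{T,k+1}(t)\}$, which is exactly the required condition. There is no genuine obstacle here; the result is essentially a bookkeeping statement that translates the defining relations of a system of competing particles through a time shift, and the argument is the strict analogue of the proof of Lemma~\ref{memorylessSP}.
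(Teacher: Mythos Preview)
Your proof is correct and follows exactly the approach the paper intends: the paper omits the proof entirely, stating only that it is similar to that of Lemma~\ref{memorylessSP}, which is precisely the direct verification of the defining properties that you carry out. The only minor addition worth making explicit is that $Y_T(t)=Y(T+t)\in\mathcal W_N$ for \emph{all} $t\ge 0$ (not just $t=0$), which is immediate since $Y$ itself takes values in $\mathcal W_N$.
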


\begin{rmk}
\label{infmemoryless}
The memoryless property also holds true for infinite systems of competing particles from Definition~\ref{InfDefCBP}. The proof is the same, with obvious adjustments. 
\end{rmk} 

The memoryless property allows us to narrow the class of driving functions to {\it regular linear} functions, that is, functions of the type~\eqref{reglin}. 

\begin{lemma} If Theorems~\ref{main1} and~\ref{main2} are true for coupled regular linear driving functions as in~\eqref{reglin}, satisfying~\eqref{compreglin}, they are true in the general case. 
\label{spanning}
\end{lemma}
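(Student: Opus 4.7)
The plan is to combine the approximation reduction already packaged in Lemma~\ref{intermediate} with the memoryless property of Lemmas~\ref{memorylessSP} and~\ref{memorylessCP}, via induction on the number of linear pieces of a coupled regular function. By Lemma~\ref{intermediate}, it suffices to establish Theorems~\ref{main1} and~\ref{main2} when $X,\ol X$ are coupled regular on a fixed interval $[0,T]$; write the shared partition as $0=t_0<\ldots<t_N=T$ and the shared axis indices as $j_1,\ldots,j_N$, with (possibly different) slopes $\al_k,\ol{\al}_k$ on $[t_{k-1},t_k]$. Applying the increment hypothesis at $s=t_{k-1}$, $t=t_k$ gives $(\ol{\al}_k-\al_k)e_{j_k}(t_k-t_{k-1})\ge 0$, hence $\al_k\le\ol{\al}_k$ for every $k$, so on each single piece the pair $(X,\ol X)$ is exactly of the form~\eqref{reglin}--\eqref{compreglin}.

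I would then induct on $N$. The base case $N=1$ is the single-piece regular linear hypothesis of the lemma. For the inductive step, I would restrict $X,\ol X$ to $[0,t_{N-1}]$, where they form coupled regular functions with $N-1$ pieces, and use the induction hypothesis to obtain the conclusion of Theorem~\ref{main1} (resp. Theorem~\ref{main2}) on that subinterval; in particular $Z(t_{N-1})\le\ol Z(t_{N-1})$ (resp. $Y(t_{N-1})\le\ol Y(t_{N-1})$). Next I would invoke the memoryless property at $T=t_{N-1}$. The shifted driving functions
$$
X_{t_{N-1}}(t) = X(t_{N-1}+t)-X(t_{N-1})+Z(t_{N-1}), \qquad \ol X_{t_{N-1}}(t) = \ol X(t_{N-1}+t)-\ol X(t_{N-1})+\ol Z(t_{N-1})
$$
(and the analogous formulas with $Y,\ol Y$ in the competing-particle setting) are single-piece regular linear on $[0,t_N-t_{N-1}]$ with common axis $e_{j_N}$ and slopes $\al_N\le\ol{\al}_N$; their initial values satisfy $X_{t_{N-1}}(0)\le\ol X_{t_{N-1}}(0)$ by the induction hypothesis, and their increment inequality is inherited from that of $X,\ol X$. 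The single-piece hypothesis of the lemma therefore applies and delivers the comparison of the shifted Skorohod solutions and boundary terms (or of the shifted competing systems). Translating back by Lemma~\ref{memorylessSP} or~\ref{memorylessCP} yields the comparison on $[t_{N-1},t_N]$ for the original objects.

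The last step is concatenation across $t_{N-1}$. The pointwise inequality $Z(t)\le\ol Z(t)$ (resp. $Y(t)\le\ol Y(t)$) on $[0,t_N]$ is immediate from its validity on each subinterval. The increment comparison for boundary terms in Theorem~\ref{main1}, namely $L(t)-L(s)\ge\ol L(t)-\ol L(s)$, requires attention only when $s\le t_{N-1}\le t$; in that case I would decompose $L(t)-L(s)=[L(t)-L(t_{N-1})]+[L(t_{N-1})-L(s)]$ and similarly for $\ol L$, then add the two piecewise inequalities. Since $T$ was arbitrary, the comparisons extend to all of $\BR_+$. The argument is essentially bookkeeping, so there is no serious obstacle; the only point worth checking carefully is that the memoryless reduction hands us exactly the comparison of initial data needed to feed back into the single-piece hypothesis on the next segment, which is precisely what Lemmas~\ref{memorylessSP} and~\ref{memorylessCP} provide.
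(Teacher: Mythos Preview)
Your proposal is correct and follows essentially the same route as the paper: reduce to coupled regular functions via Lemma~\ref{intermediate}, then use the memoryless property (Lemmas~\ref{memorylessSP}, \ref{memorylessCP}) to peel off one linear piece at a time, and finally concatenate the boundary-term increment inequalities across the partition points. The only cosmetic difference is that you package the piece-by-piece argument as an induction on the number $N$ of pieces (splitting off the last piece), whereas the paper iterates forward from $[t_0,t_1]$ to $[t_1,t_2]$ and so on; these are equivalent formulations of the same argument.
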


\begin{proof} By Lemma~\ref{intermediate}, it suffices to show these theorems for coupled regular driving  functions. For example, let us prove Theorem~\ref{main1} for coupled regular driving functions $X$ and $\ol{X}$; Theorem~\ref{main2} is proved similarly. Let $0 = t_0 < t_1 <  \ldots < t_N = T$ and $j_1, \ldots, j_N$ be the common parameters for these functions, as in Definition~\ref{defRRegular}. The restrictions 
$$
\left.X\right|_{[t_0, t_1]},\ \left.\ol{X}\right|_{[t_0, t_1]}
$$
are coupled regular linear functions. Assuming we proved Theorem~\ref{main1} for such driving functions, we have: 
$$
Z(t) \le \ol{Z}(t),\ t \ge 0;\ L(t) - L(s) \ge \ol{L}(t) - \ol{L}(s),\ 0 \le s \le t \le t_1.
$$
In particular, we have: $Z(t_1) \le \ol{Z}(t_1)$. But 
$t \mapsto Z(t + t_1)$ is the solution of the Skorohod problem with reflection matrix $R$ and driving function $t \mapsto X(t + t_1) - X(t_1) + Z(t_1)$;  a similar statement is true for $t \mapsto \ol{Z}(t + t_1)$. And 
$$
L(t + t_1) - L(t_1),\ \ol{L}(t + t_1) - \ol{L}(t_1),\ 0 \le t \le t_2 - t_1.
$$
are the corresponding vectors boundary terms for these Skorohod problems. The functions
\begin{equation}
\label{newF}
X(t + t_1) - X(t_1) + Z(t_1)\ \ \mbox{and}\ \ \ol{X}(t + t_1) - \ol{X}(t_1) + \ol{Z}(t_1)
\end{equation}
are coupled regular linear driving functions on $[0, t_2 - t_1]$.  They also satisfy conditions of Theorem~\ref{main1}. Indeed, 
$$
 \left.X(t + t_1) - X(t_1) + Z(t_1)\right|_{t=0} = Z(t_1) \in S,\ 
\left.\ol{X}(t + t_1) - \ol{X}(t_1) + \ol{Z}(t_1)\right|_{t=0} = \ol{Z}(t_1) \in S,
$$
and for $0 \le s \le t \le t_2 - t_1$ we have:
\begin{align*}
(X(t + t_1) &- X(t_1) + Z(t_1)) - (X(s + t_1) - X(t_1) + Z(t_1)) = X(t + t_1) - X(s + t_1) \\ & \le \ol{X}(t + t_1) - \ol{X}(s + t_1) = (\ol{X}(t + t_1) - \ol{X}(t_1) + \ol{Z}(t_1)) - (\ol{X}(s + t_1) - \ol{X}(t_1) + \ol{Z}(t_1)).
\end{align*}
Therefore, applying Theorem~\ref{main1} for coupled regular linear driving functions~\eqref{newF}, we get: 
$$
Z(t + t_1) \le \ol{Z}(t + t_1),\ 0 \le t \le t_2 - t_1,
$$
$$
L(t + t_1) - L(s + t_1) \ge \ol{L}(t + t_1) - \ol{L}(s + t_1),\ 0 \le s \le t \le t_2 - t_1.
$$
Similarly, moving to the next interval $[t_2, t_3]$, etc., we can show that for every $k = 1, \ldots, N$, 
\begin{equation}
\label{9067}
Z(t) \le \ol{Z}(t),\ t \in [t_{k-1}, t_k],
\end{equation}
\begin{equation}
\label{345}
L(t + t_{k-1}) - L(s + t_{k-1}) \ge \ol{L}(t + t_{k-1}) - \ol{L}(s + t_{k-1}),\ 0 \le s \le t \le t_k - t_{k-1}.
\end{equation}
We can equivalently write~\eqref{9067} as 
$$
Z(t) \le \ol{Z}(t),\ t \in [0, T],
$$
and~\eqref{345} as
\begin{equation}
\label{346}
L(t) - L(s) \ge \ol{L}(t) - \ol{L}(s),\ t_{k-1} \le s \le t \le t_k,\ k = 1, \ldots, N.
\end{equation}
Now, let us show that 
$$
L(t) - L(s) \ge \ol{L}(t) - \ol{L}(s),\ 0 \le s \le t \le T.
$$
This is done just by summing the inequalities~\eqref{346}: find $k, l = 1, \ldots, N$ such that 
$$
t_{k-1} \le s \le t_k \le \ldots \le t_l \le t \le t_{l+1}.
$$
Then we have:
$$
\begin{cases}
L(t) - L(t_l) \ge \ol{L}(t) - \ol{L}(t_l)\\
L(t_l) - L(t_{l-1}) \ge \ol{L}(t_l) - \ol{L}(t_{l-1})\\
\ldots\\
L(s) - L(t_{k-1}) \ge \ol{L}(s) - \ol{L}(t_{k-1})
\end{cases}
$$
Sum these inequalities and finish the proof. 
\end{proof}

\subsection{Exact solutions of the Skorohod problem for regular linear driving functions}

Fix the dimension $d \ge 1$, and recall that $S = \BR^d_+$ is the positive $d$-dimensional orthant. Let 
\begin{equation}
\label{Ilinear}
X(t) = x + \al e_it,\ 0 \le t \le T,
\end{equation}
be a regular linear driving function. Here, $x \in S$, $\al \in \BR$ and $i = 1,\ldots, d$. Take a reflection nonsingular $\CM$-matrix $R$. In this subsection, we find the explicit solution $Z$ (and the vector $L$ of boundary terms) for the Skorohod problem in the orthant $S$ with  reflection matrix $R$ and driving function $X$. 

Let us first describe the behavior of this solution informally. The solution $Z$ ``wants'' to move along with the driving function $X$. However, if $X$ gets out of the orthant $S$, then $Z$ ``is not allowed'' out of the orthant; the boundary terms push it back to $S$. 

{\it Case 1.} $\al \ge 0$. Then $X$ does not get out of $S$. This is a trivial case: the boundary terms $L_i$ stay zero:  $L(t) \equiv 0$, and the solution $Z$ exactly follows the driving function $X$: $Z(t) \equiv X(t)$. 

\medskip

{\it Case 2.} $\al < 0$ and $x_i = 0$. Then the driving function $X$ is moving along the axis $x_i$ in the negative direction, starting from the face $S_i$ of the boundary $\pa S$. The solution $Z$ of the Skorohod problem ``wants'' to move in tandem with $X$, which means that it ``wants'' to cross this face $S_i$. However, it cannot do this, since it must be in the orthant. Therefore, it stays at this face. The boundary term $L_i$ increases: this term ``counters the influence'' of the driving function $X$, which ``wants'' to take $Z$ out of the orthant. This increase in $L_i$ also influences other components $Z_j,\ j \ne i$, of $Z_i$, through reflection matrix $R$ (or, more precisely, through the elements $r_{ij} \le 0,\ j \ne i$). Therefore, if $Z$ moves on the face $S_i$, this contributes to {\it decrease} of other components $Z_j,\ j \ne i$. Let 
\begin{equation}
\label{defofset}
I(t) = \{j = 1, \ldots, d\mid Z_j(t) = 0\}.
\end{equation}
Suppose $j \in I(0)$. Then $Z_j$ was originally zero, and it ``wants'' to decrease because of the increase in $L_i$. But $Z_j$ cannot decrease further, because $Z(t)$ must stay in the orthant. Therefore, the boundary term $L_j$ starts to increase, to ``counter'' the influence of $L_i$. This can be said of all $j \in I(0)$. If, however, $j \notin I(0)$, then $Z_j(0) > 0$, and so $Z_j$ ``is allowed'' to decrease, so the boundary term $L_j$ stays zero. 

Let us summarize this: for $j \in I(t)$, the boundary term $L_j$ increases, and $Z_j(t) = 0$; for $j \notin I(t)$, the boundary term $L_j(t) = 0$, and $Z_j$ decreases. This description is accurate until some new component $Z_j$ hits zero; another way to say this is when the set-valued function $I$ jumps upward.  Denote this moment by $\tau_1$. Then, using the memoryless property from Lemma~\ref{memorylessSP}, we repeat the same, starting from $\tau_1$. Let $\tau_2$ be the next jump moment of the function $I$, etc. Between any of these two moments, the function $I$ is constant. There will be no more than $d$ of these moments, because the function $I$ increases at every jump, and $i \in I(0)$, but 
$I(t) \subseteq \{1, \ldots, d\}$. 

\medskip

{\it Case 3.} $x_i > 0$ and $\al < 0$. Then $X$ moves to the boundary and hits it at some moment $\tau_1 = x_i/|\al|$. The solution $Z$ ``wants'' to move in tandem with $Z$. Until $\tau_1$, however, the solution $Z$ does not need to be pushed inside the orthant $S$ by boundary terms, so this is also a trivial case: $L(t) \equiv 0$, $Z(t) \equiv X(t)$. If $\tau_1 \ge T$, then the time-horizon is earlier than hitting moment of the boundary, and this completes the description of $Z$ and $L$. If $\tau_1 < T$, then we use the memoryless property and start from $\tau_1$; we are back in Case 2. 

\medskip

Now, let us formulate the result rigorously. 

\begin{lemma} Let $R$ be a $d\times d$ reflection nonsingular $\CM$-matrix. Let $X$ be given by~\eqref{Ilinear}. Let $Z$ be the solution to the Skorohod problem in the orthant $S$ with reflection matrix $R$ and driving function $X$. Let $L$ be the corresponding vector of boundary terms. Then $Z$ and $L$ are given by the following formulas. 

\medskip

(i) If $\al \ge 0$, then $Z(t) \equiv X(t)$ and $L(t) \equiv 0$.

\medskip

(ii) If $\al < 0$, and $x_i = 0$, then:

\smallskip

(a) $Z$ is nondecreasing, $L$ is nondecreasing, the set-valued function $I$ defined in~\eqref{defofset} is nondecreasing.

\smallskip

(b) There exists a sequence $0 = \tau_0 < \tau_1 < \ldots < \tau_K = T$ of moments such that on each $[\tau_{l-1}, \tau_l)$, $I(t)$ is constant, and 
\begin{equation}
\label{recurrenceforI}
\tau_l := \inf\{t > \tau_{l-1}\mid I(t) \ne I\left(\tau_{l-1}\right)\}\wedge T.
\end{equation}
We use the convention $\inf\varnothing = +\infty$. At each moment $\tau_l,\ l = 1,\ldots, K-1$, the function $I$ jumps and increases.  

\smallskip

(c) For $t \in [\tau_{l-1}, \tau_l]$, letting $J := I(\tau_{l-1})$, we have:
\begin{equation}
\label{formulaforZ}
[Z(t)]_J = 0;\ \ [Z(t)]_{J^c} = [Z(\tau_{l-1})]_{J^c} + |\al|[R]_{J^cJ}[R]_{J}^{-1}[e_i]_{J}(t - \tau_{l-1}),
\end{equation}
\begin{equation}
\label{formulaforL}
[L(t)]_J = [L(\tau_{l-1})]_J + |\al|[R]_{J}^{-1}(t - \tau_{l-1});\ \ [L(t)]_{J^c} = [L(\tau_{l-1})]_{J^c}.
\end{equation}

\medskip

(iii) If $\al < 0$, and $x_i > 0$, then $Z$ is nondecreasing, $L$ is nondecreasing, the set-valued function $I$ from~\eqref{defofset} is nondecreasing, and there exists a sequence $0 = \tau_0 < \tau_1 < \ldots < \tau_K = T$ of moments such that on each $[\tau_{l-1}, \tau_l)$, $I(t) \equiv I(\tau_{l-1}) =: J$ is constant, on $[0, \tau_1]$ we have:
$$
Z(t) \equiv X(t),\ \ L(t) \equiv 0,
$$
and on $[\tau_{l-1}, \tau_l]$, $l = 2, \ldots, k$, the functions $Z$ and $L$ are given by the formulas~\eqref{formulaforZ} and~\eqref{formulaforL}. As in case (ii), at each moment $\tau_l,\ l = 1,\ldots, K-1$, the function $I$ jumps and increases. 
\label{explicitSkorohod}
\end{lemma}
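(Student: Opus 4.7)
Case (i) is immediate: since $x \in S$ and $\alpha e_i t \ge 0$ coordinatewise, $X(t)$ remains in $S$, so $(Z, L) = (X, 0)$ trivially solves the Skorohod problem, and Proposition~\ref{existenceSRBM} provides uniqueness. Case (iii) reduces to case (ii) at the hitting time $\tau_1 := x_i/|\alpha|$: on $[0, \tau_1]$ the function $X$ stays in $S$ so $(Z, L) = (X, 0)$ works, and at $\tau_1$ the $i$-th component of $Z$ equals zero, which puts the shifted driving function $X_{\tau_1}(t) = X(\tau_1 + t) - X(\tau_1) + Z(\tau_1)$ into the setup of case (ii); the memoryless property (Lemma~\ref{memorylessSP}) then transports the construction to $[\tau_1, T]$.

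For case (ii), I would build $Z$ and $L$ inductively over the intervals $[\tau_{l-1}, \tau_l]$, starting with $J_1 := \{j : x_j = 0\}$ (which contains $i$) and $\tau_0 = 0$. Given the construction up to $\tau_{l-1}$ with $J := I(\tau_{l-1})$, I would set $[Z(t)]_J \equiv 0$ on $[\tau_{l-1}, \tau_l]$ and define $[L(t)]_J$, $[Z(t)]_{J^c}$, $[L(t)]_{J^c}$ by the formulas~\eqref{formulaforZ}--\eqref{formulaforL}. The identity $Z(t) = X(t) + R L(t)$ would then be verified by block multiplication, using that $i \in J$ (because $I$ is nondecreasing and $i \in J_1$) implies $[e_i]_{J^c} = 0$, hence $[X(t)]_{J^c}$ is constant on the interval, while $[X(t)]_J = [x]_J + \alpha[e_i]_J t$. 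The jump time $\tau_l$ is defined by~\eqref{recurrenceforI}, i.e.\ the first moment when a new component of $[Z]_{J^c}$ reaches zero, capped by $T$.

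Verification of the Skorohod conditions rests on the fact that every principal submatrix of a reflection nonsingular $\CM$-matrix is itself of this type: writing $R = I_d - Q$ with $Q \ge 0$ of spectral radius $< 1$, the same structure is inherited by $[R]_J = I_{|J|} - [Q]_J$, so $[R]_J^{-1} = \sum_{k \ge 0}[Q]_J^k \ge 0$. Consequently the direction $|\alpha|[R]_J^{-1}[e_i]_J \ge 0$ makes $[L]_J$ nondecreasing, and since $[R]_{J^c J} \le 0$, the vector $|\alpha|[R]_{J^c J}[R]_J^{-1}[e_i]_J \le 0$ makes $[Z]_{J^c}$ nonincreasing. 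This shows both that $\tau_l$ is well defined and that at each $\tau_l < T$ the set $I$ strictly grows, bounding the number of subintervals by $d$. The remaining Skorohod properties are coordinatewise: $[Z]_J \equiv 0$ and $[Z]_{J^c} \ge 0$ on the interval (strictly positive on the open subinterval by the definition of $\tau_l$), $L$ is nondecreasing because $[L]_{J^c}$ is constant and $[L]_J$ grows linearly with nonnegative coefficient, and the support condition $\int Z_j\,\md L_j = 0$ holds because $Z_j \equiv 0$ for $j \in J$ while $L_j$ is constant for $j \in J^c$. Pathwise uniqueness from Proposition~\ref{existenceSRBM} identifies this piecewise construction with the true solution. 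The principal obstacle is the $\CM$-matrix inversion fact above; granted this, the rest is bookkeeping about how the active set $J$ evolves.
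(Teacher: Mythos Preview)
Your proposal is correct and follows essentially the same approach as the paper: verify case~(i) directly, reduce case~(iii) to case~(ii) via the memoryless property at the moment $X_i$ hits zero, and for case~(ii) construct $Z$ and $L$ piecewise-linearly on successive intervals, checking the block identity $Z = X + RL$ and the Skorohod conditions using $[R]_J^{-1} \ge 0$ (from the Neumann series) together with $[R]_{J^cJ} \le 0$. The only cosmetic differences are that the paper invokes the memoryless property to reduce each step to the first interval rather than verifying the incremental identity directly, and defers the $\CM$-matrix facts to appendix Lemmas~\ref{P1} and~\ref{P5} rather than arguing inline.
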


\begin{proof} The case (i) is straightforward. Let us show (ii). Using the memoryless property and induction by $l$, we can assume w.l.o.g. that $\tau_l = 0$, that is, $l = 0$: it suffices to consider only the first interval $[0, \tau_1]$ of linearity. We have: $x_i = 0$, that is, $i \in I(0) = J$. We can write the main equation governing $Z$ and $L$, 
$$
Z(t) = X(t) + RL(t),
$$
in the block form:
\begin{equation}
\label{systemsystem}
\begin{cases}
[Z(t)]_J = [X(t)]_J + [R]_J[L(t)]_J + [R]_{JJ^c}[L(t)]_{J^c}\\
[Z(t)]_{J^c} = [X(t)]_{J^c} + [R]_{J^cJ}[L(t)]_J + [R]_{J^c}[L(t)]_{J^c}
\end{cases}
\end{equation}
But $[X(t)]_{J^c} = [x + \al e_it]_{J^c} = [x]_{J^c}$, because $i \in J$. Also, $[X(t)]_J = \al[e_i]_Jt$, because $[x]_J = 0$. Now it is straightforward to check that the functions $Z(t)$ and $L(t)$ given by
$$
[Z(t)]_J = 0,\ \ [Z(t)]_{J^c} = |\al|[R]_{J^cJ}[R]_J^{-1}t + [x]_{J^c},
$$
$$
[L(t)]_{J^c} = 0,\ \ [L(t)]_J = |\al|[R]_J^{-1}[e_i]_Jt,
$$
satisfy the system~\eqref{systemsystem}. Let us now verify that for $j = 1, \ldots, d$, the boundary term $L_j$ can grow only when $Z_j = 0$. This follows from the fact that 
$$
Z_j(t) \equiv 0,\ j \in J;\ L_j(t) \equiv 0,\ j \in J^c.
$$
The next step is to check that $L$ is nondecreasing and $Z$ is nonincreasing on $[0, \tau_{1}]$. Indeed, by Lemma~\ref{P1} $[R]_J^{-1} \ge 0$, and $[e_i]_J \ge 0$, so 
\begin{equation}
\label{compar}
|\al|[R]_{J}^{-1}[e_i]_J \ge 0.
\end{equation}
Therefore, $L$ is nondecreasing on $[0, \tau_1]$. Next, $R$ is a reflection nonsingular $\CM$-matrix, so off-diagonal elements of $R$ (in particular, all elements of $[R]_{J^cJ}$) are nonpositive. From this and~\eqref{compar} it follows that 
$$
|\al|[R]_{J^cJ}[R]_J^{-1}[e_i]_J \le 0.
$$
So $Z$ is nonincreasing on $[0, \tau_1]$. We have the formula
$$
\tau_{1} := \inf\{t \ge 0\mid I(t) \ne I(0)\}\wedge T,
$$
so $\tau_1$ is the first moment (no later than the time horizon $T$) when $Z$ hits ``new'' parts of the boundary, and the function $I$ increases. 
If this moment comes later than $T$, then we let $\tau_1 = T$. 
By definition of $\tau_1$, we have: $I(0) \subsetneq I(\tau_1)$. So the set-valued function $I$ is constant on $[0, \tau_1)$, but increases by a jump at $\tau_1$. Part (iii) follows from (ii) and the memoryless property.
\end{proof}

\subsection{Exact formulas for a system of competing particles with a regular linear driving function}

Let us now do a similar calculation as in the previous subsection, but for a system of competing particles instead of a Skorohod problem. First, let us informally describe the dynamics of these particles. Recall that the driving function is given by~\eqref{Ilinear}. 

Without loss of generality, assume $\al > 0$. The case $\al = 0$ is trivial ($Y(t) \equiv X(t) \equiv x$ and $L(t) \equiv 0$), and the case $\al < 0$ can be reduced to $\al > 0$ by the following lemma. (The proof is trivial and is omitted.)

\begin{lemma} Suppose $Y = (Y(t), t \ge 0)$ is a system of $N$ competing particles with parameters of collision $(q^{\pm}_k)_{1 \le k \le N}$ and driving function $X$. Then the  following $\BR^N$-valued process
$$
\tilde{Y} = \left(\tilde{Y}_1, \ldots, \tilde{Y}_N\right)',\ \ \tilde{Y}_n(t) := -Y_{N-n+1}(t),\ \ n = 1, \ldots, N,
$$
is also a system of $N$ competing particles, with parameters of collision $(\tilde{q}^{\pm}_n)_{1 \le n \le N}$ and driving function $(-X_N, \ldots, -X_1)$, where
$$
\tilde{q}^+_n = q^-_{N - n + 1},\ \tilde{q}^-_n = q^+_{N - n + 1},\ \ n = 1, \ldots, N.
$$
\label{inversionCP}
\end{lemma}

A system of competing particles involves  colliding particles, and ``asymmetric collisions'' means that they ``have different mass''. We can rewrite the expression 
$$
X(t) = x + \al e_it
$$
in the coordinate form:
$$
X_i(t) = x_i + \al t,\ X_j(t) = x_j,\ j \ne i.
$$
This means that the $i$th ranked particle ``wants'' to move to the right with speed $\al$, and all other particles ``want'' to stay motionless. But when the particles, say with ranks $i$ and $i+1$, collide, they move together to the right with a new speed (smaller than $\al$). The collision term for particles $Y_i$ and $Y_{i+1}$ starts to increase linearly from zero when they first collide. All other particles stay motionless. When these two particles hit, say, the $i+2$nd particle $Y_{i+2}$, then these three particles stick together and move to the right. The collision terms $L_{(j, j+1)}$ for all other pairs of adjacent particles $Y_j, Y_{j+1}$ stay zero. Indeed, even if $Y_j(t) = Y_{j+1}(t)$, but $Y_j$ and $Y_{j+1}$ are not moving, then no collision term is required to keep them in order: $Y_j(t) \le Y_{j+1}(t)$. But the collision term $L_{(i+1, i+2)}$ starts to increase, and the collision term $L_{(i, i+1)}$ continues to increase.

In other words, at any time $t$ there is a set 
\begin{equation}
\label{7801}
I(t) = \{j = i, \ldots, N \mid Y_j(t) = Y_i(t)\}
\end{equation}
of particles which are moving together with $Y_i$ to the right at this moment $t$. Since these particles satisfy
$$
Y_1(t) \le Y_2(t) \le \ldots \le Y_N(t),
$$
the set $I(t)$ has the form 
$$
I(t) = \{i, i+1, \ldots, k(t)\}
$$
for some $k(t) = i, \ldots, N$. The speed of this movement depends on $k(t)$. When these moving particles hit a new particle $Y_{k(t)+1}$, then the set $I$ increases by a jump. So we have a sequence of moments of hits:
$$
0 = \tau_0 < \tau_1 < \ldots < \tau_K = T.
$$
At any interval between these moments, $I(t)$ is constant, the particles $Y_j,\ j \in I(t)$ move to the right, and all other particles do not move. 

Now, let us do the precise calculation. 

\begin{lemma} There exists a sequence of moments
$$
0 = \tau_0 < \tau_1 < \ldots < \tau_K := T
$$
such that on each $[\tau_{l-1}, \tau_l)$, the set-valued function $I$ defined in~\eqref{7801} is constant, but it jumps and increases at each $\tau_l$ (except maybe $\tau_K = T$). On each $[\tau_{l-1}, \tau_l)$, define
$$
\be_l = \al\left[1 + \frac{q^-_i}{q^+_{i+1}} + \frac{q^-_iq^-_{i+1}}{q^+_{i+1}q^+_{i+2}} + \ldots + \frac{q^-_iq^-_{i+1}\ldots q^-_{k_l - 1}}{q^+_{i+1}q^+_{i+2}\ldots q^+_{k_l}}\right]^{-1}, 
$$
$k_l \equiv k(t)$ for $t \in [\tau_{l-1}, \tau_l)$. Then we have:
\begin{equation}
Y_j(t) = \const,\ j \in I^c(t);\ \ \mbox{and}\ \ 
\label{543}
Y_j(t) \equiv Y_i(t) = Y_i(\tau_{l-1}) + \be_l(t - \tau_{l-1}),\ j \in I(t).
\end{equation}
The moment $\tau_l$ is defined as 
$$
\tau_l = \inf\{t \ge \tau_{l-1}\mid I(t) \ne I(\tau_{l-1})\}\wedge T.
$$
\label{explicitCP}
As before, we use the convention $\inf\varnothing = +\infty$. 
\end{lemma}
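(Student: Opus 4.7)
The plan is to exhibit explicit candidates for $Y$, $L$, and the sequence $(\tau_l)$, verify the conditions of Definition~\ref{generalDef} on each successive interval, and invoke the uniqueness clause of Lemma~\ref{existenceCP}. I would reduce to the case $\al > 0$ at the outset: if $\al = 0$ then trivially $Y \equiv x$, $L \equiv 0$, $\tau_0 = 0 < \tau_1 = T$; if $\al < 0$, Lemma~\ref{inversionCP} converts the problem into a regular linear problem with positive slope (on the reversed particle indexing), and it suffices to handle $\al > 0$. Assuming $\al > 0$, induction on $l$ combined with the memoryless property (Lemma~\ref{memorylessCP}) reduces everything to constructing the first piece $[0, \tau_1]$: once $Y$ and $L$ are determined on $[0, \tau_{l-1}]$, the shifted problem on $[\tau_{l-1}, T]$ has the same linear driving direction $\al e_i$ and starts from state $Y(\tau_{l-1})$, so the same ansatz applies.

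On $[0, \tau_1]$ I would first identify $I(0)$. Since $Y(0) = x \in \mathcal W_N$, the set $I(0) = \{j \ge i : x_j = x_i\}$ is a contiguous block $\{i, i+1, \ldots, k_1\}$ with $x_i = x_{i+1} = \ldots = x_{k_1}$, and $x_{k_1+1} > x_i$ when $k_1 < N$. I then propose $Y_j(t) = x_j$ for $j \notin \{i, \ldots, k_1\}$, $Y_j(t) = x_i + \be_1 t$ for $j \in \{i, \ldots, k_1\}$, and $L_{(j, j+1)}(t) = \ell_j t$ for $j = i, \ldots, k_1 - 1$, with every other collision term identically zero. Substituting into the equation of Definition~\ref{generalDef}(i) for each $j \in I(0)$ yields the triangular linear system $\be_1 = \al - q^-_i\ell_i$, together with $\be_1 = q^+_j\ell_{j-1} - q^-_j\ell_j$ for $i < j < k_1$, and $\be_1 = q^+_{k_1}\ell_{k_1 - 1}$.

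Solving this system from the bottom upward gives $\ell_{k_1 - 1} = \be_1/q^+_{k_1}$ and the recursion $\ell_{j-1} = (\be_1 + q^-_j\ell_j)/q^+_j$; telescoping expresses $\ell_i$ as an explicit linear function of $\be_1$, and substitution into the first equation determines $\be_1$ as the stated rational expression in $\al$ and the $q^{\pm}$ parameters. Since $\al > 0$ and all $q^{\pm} \in (0, 1)$, both $\be_1$ and every $\ell_j$ are strictly positive. I would then define $\tau_1 = (x_{k_1+1} - x_i)/\be_1 \wedge T$ when $k_1 < N$, and $\tau_1 = T$ when $k_1 = N$; by construction this is the first time $Y_{k_1}$ reaches $x_{k_1+1}$, i.e., the first moment the function $I$ changes. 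All four conditions of Definition~\ref{generalDef} can now be checked on $[0, \tau_1]$: (i) by construction; (ii) and (iii) from $L(0) = 0$ and positivity of the $\ell_j$; (iv) because each $L_{(j, j+1)}$ grows only on indices $j$ where $Y_j(t) = Y_{j+1}(t)$, which holds throughout on the block $I(0)$.

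Restarting at $\tau_1$ via Lemma~\ref{memorylessCP}, the new block $I(\tau_1)$ strictly contains $I(0)$ (now including $k_1 + 1$), so the induction can be continued; since $I \subseteq \{i, i+1, \ldots, N\}$ and strictly grows at every nontrivial $\tau_l$, the sequence terminates in at most $N - i + 1$ steps, yielding $K < \infty$ and $\tau_K = T$. The main obstacle is the algebraic bookkeeping for the triangular system and the verification that it produces positive $\ell_j$'s simultaneously with the claimed formula for $\be_1$. A secondary subtlety is the treatment of ranks $j < i$ with $x_j = x_i$: since $Y_i$ moves strictly to the right while these $Y_j$ stay fixed, the ordering $Y_j(t) \le Y_i(t)$ is preserved for $t > 0$ without any contribution from $L_{(i-1, i)}$, so that term vanishes on $[0, \tau_1]$ and does not enter the linear system above.
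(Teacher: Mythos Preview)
Your proposal is correct and follows essentially the same route as the paper: reduce to $\al>0$, use the memoryless property (Lemma~\ref{memorylessCP}) plus induction to treat only the first interval $[0,\tau_1]$, set all collision terms outside the block $\{i,\dots,k_1\}$ to zero, solve the resulting linear system for $\be_1$ and the $\ell_j$, and bound the number of steps by the size of $\{i,\dots,N\}$. The only cosmetic difference is that the paper eliminates the $L_{(j,j+1)}$ by taking the weighted sum with coefficients $1,\,q^-_i/q^+_{i+1},\,q^-_iq^-_{i+1}/(q^+_{i+1}q^+_{i+2}),\dots$ (the same weights as in the proof of Lemma~\ref{existenceCP}), whereas you solve the triangular system from the bottom up; these are equivalent manipulations, and your explicit appeal to the uniqueness clause of Lemma~\ref{existenceCP} is a clean way to close the argument.
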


\begin{proof} Similarly to the previous subsection, we can use the memoryless property and induction  by $l$ to assume that $l = 0$. Assume $I(0) = \{i, \ldots, k_0\}$, so initially the ``leading'' particle $i$ was at the same position as particles with ranks $i+1, \ldots, k_0$. Note that we care only about particles with ranks larger than $i$, because the particle with rank $i$ is moving to the right. Even if, say, the particle with rank $i-1$ occupied the same position initially as the particle with rank $i$, they will not interact: the particle $Y_i$, together with $Y_{i+1}, \ldots, Y_{k_0}$, will move rightward and``leave'' the idle particle $i-1$ at its place. So we have: on $[0, \tau_1]$, 
$$
L_{(1, 2)}(t) = \ldots = L_{(i-1, i)}(t) = L_{(k_0, k_0+1)}(t) = \ldots = L_{(N-1, N)}(t) = 0,
$$
and $Y_1, \ldots, Y_{i-1}, Y_{k_0+1},\ldots, Y_N$ are constant on this time interval. The dynamics of the particles $Y_i, \ldots, Y_{k_0}$ on $[0, \tau_1]$ is described as follows: 
$$
\begin{cases}
Y_i(t) = Y_{i+1}(t) = \ldots = Y_{k_0}(t), \\
Y_i(t) = x_i + \al t - q^-_iL_{(i, i+1)}(t),\\
Y_{i+1}(t) = x_{i+1} + q^+_{i+1}L_{(i, i+1)}(t) - q^-_{i+1}L_{(i+1, i+2)}(t), \\
\ldots \\
Y_{k_0}(t) = x_{k_0} + q^+_{k_0}L_{(k_0-1, k_0)}(t).
\end{cases}
$$
But $x_i = x_{i+1} = \ldots = x_{k_0}$, because $Y_i(0) = Y_{i+1}(0) = \ldots = Y_{k_0}(0)$ (the initial positions of particles with ranks $i, i+1, \ldots, k_0$ are the same). We can solve this system: multiplying the third equation in the system above by $q^-_i/q^+_{i+1}$, the fourth by $q^-_iq^-_{i+1}/q^+_{i+1}q^+_{i+2}$, etc. and add these equations. We get 
the equation~\eqref{543}. Since $Y_i(t)$ is an increasing function, it might hit $Y_{k_0+1}(0) = x_{k_0+1}$ before the time horizon $T$. (If it does not, there is nothing else to prove.) Then $\tau_1$ is this hitting moment. The set-valued function $I$ is constant on $[0, \tau_1)$ but jumps at $\tau_1$. Using the memoryless property and induction, we repeat this proof starting from $\tau_1$ time instead of $0$. Since the function $I$ increases at every $\tau_l$, and it can take set values which contain $\{i\}$ and are contained in $\{i, \ldots, N\}$, there will be at most $N+1$ induction steps. 
\end{proof}

\subsection{Proof of Theorem~\ref{main1}} Take driving functions as in~\eqref{reglin} satisfying~\eqref{compreglin}. Let 
$$
\tau_0 := 0, \tau_1, \ldots, \tau_K := T
$$
be the sequence of moments described in Lemma~\ref{explicitSkorohod}, and let $\ol{\tau}_0 := 0, \ol{\tau}_1, \ldots, \ol{\tau}_{\ol{K}} := T$ be the corresponding sequence of moments for the driving function $\ol{X}$ instead of $X$. Arrange all these moments in the increasing order: 
$$
\rho_0 := 0 < \rho_1 := \tau_1\wedge\ol{\tau}_1 < \rho_2 < \ldots < \rho_M := T.
$$
Then it suffcies to show the theorem for $t \le \rho_1$. Indeed, suppose that we prove this, then we can use the memoryless property for Skorohod problems and prove this for $\rho_1 \le t \le \rho_2$, then for $\rho_2 \le t \le \rho_3$, etc. Extending the result from $[0, \rho_1]$ to $[0, T]$ requires reasoning analogous to the argument in proof of Lemma~\ref{spanning}. On $[0, \rho_1]$, we know explicit expressions for $Z$, $\ol{Z}$, $L$ and $\ol{L}$ from Lemma~\ref{explicitSkorohod}. Let $I(t)$ be the set-valued function defined in Lemma~\ref{explicitSkorohod}, and $\ol{I}(t)$ be the same function for $\ol{X}$ instead of $X$. Consider the following cases.

\medskip

{\it Case 1.} $0 \le \al \le \ol{\al}$. Then $Z(t) \equiv X(t)$, $\ol{Z}(t) \equiv \ol{X}(t)$, $L(t) \equiv \ol{L}(t) \equiv 0$, and the statement is obvious. 

\medskip

{\it Case 2.} $\al \le 0 \le \ol{\al}$. Then $Z$ is nonincreasing, $\ol{Z} = \ol{X}$ is nondecreasing, $\ol{L}(t) \equiv 0$, and $L$ is nondecreasing. Therefore, $\ol{Z} - Z$ is nondecreasing, and $L - \ol{L}$ is nonincreasing; the rest is trivial. 

\medskip

{\it Case 3.} $\al \le \ol{\al} \le 0$, and $x_i > 0$. Since $x \le \ol{x}$, we have: $\ol{x}_i > 0$; the rest is similar to Case 1. 

\medskip

{\it Case 4.} $\al \le \ol{\al} \le 0$, and $x_i = 0$, $\ol{x}_i > 0$. Then $I(0) \supsetneq \ol{I}(0)$, and on $[0, \rho_1)$ we have: $\ol{L}(t) \equiv 0$, $L$ is nondecreasing, so 
$$
L(t) - L(s) \ge \ol{L}(t) - \ol{L}(s),\ 0 \le s \le t \le \rho_1.
$$
Furthermore, $\ol{Z}(t) \equiv \ol{X}(t) = \ol{x} + \ol{\al}e_it$. And $Z(t)$ is given by: 
$$
Z_j(t) = 0 \le \ol{Z}_j(t),\ j \in I(0); 
$$
$Z(t)$ is nonincreasing, so for $j \notin I(0)$ we have: $\ol{Z}_j(t) = \ol{Z}_j(0) = \const$. Thus, 
$$
Z_j(t) \le Z_j(0) \le \ol{Z}_j(0) = \ol{Z}_j(t).
$$

\medskip

{\it Case 5.} $\al \le \ol{\al} \le 0$, and $x_i = \ol{x}_i = 0$. This is the most difficult case. We again have: $\ol{I}(0) \subseteq I(0)$, and on $[0, \rho_1]$ we get: 

\medskip

{\it Case 5.1.} $j \in \ol{I}(0)$. Then $j \in I(0)$, so $Z_j(t) \equiv \ol{Z}_j(t) \equiv 0$; therefore, $Z_j(t) - Z_j(s) \le \ol{Z}_j(t) - \ol{Z}_j(s),\ \ 0 \le s \le t$. Furthermore, 
$$
[\ol{L}(t)]_{\ol{I}(0)} = |\ol{\al}|[\ol{R}]_{\ol{I}(0)}^{-1}[e_i]_{\ol{I}(0)}t, \ \ \ [L(t)]_{I(0)} = |\al|[R]_{I(0)}^{-1}[e_i]_{I(0)}t.
$$
Applying Lemma~\ref{P5} to $J = I(0)$, we get that $[R]_{I(0)}$ is a reflection nonsingular $\CM$-matrix. Applying Lemma~\ref{P1} to $[R]_{I(0)}$ instead of $R$ and $J = \ol{I}(0)$, we get: 
\begin{equation}
\label{777077}
[R]_{\ol{I}(0)}^{-1} \le \left[[R]^{-1}_{I(0)}\right]_{\ol{I}(0)}.
\end{equation}
Also, $[e_i]_{\ol{I}(0)} = [[e_i]_{I(0)}]_{\ol{I}(0)} \ge 0$, and $|\ol{\al}| \le |\al|$. Since $R \le \ol{R}$, we have: $[R]_{\ol{I}(0)} \le [\ol{R}]_{\ol{I}(0)}$. Both $[R]_{\ol{I}(0)}$ and $[\ol{R}]_{\ol{I}(0)}$ are reflection nonsingular $\CM$-matrices of the same size, so by Lemma~\ref{P3} we have: 
\begin{equation}
\label{2310}
[R]^{-1}_{\ol{I}(0)} \ge [\ol{R}]^{-1}_{\ol{I}(0)} \ge 0.
\end{equation}
In addition, by Lemma~\ref{P7} we have:
\begin{equation}
\label{9926}
\left[[R]^{-1}_{I(0)}[e_i]_{I(0)}\right]_{\ol{I}(0)} \ge \left[[R]_{I(0)}^{-1}\right]_{\ol{I}(0)}\left[[e_i]_{I(0)}\right]_{\ol{I}(0)}
\end{equation}
Combining~\eqref{777077}, ~\eqref{2310}, ~\eqref{9926} and the fact that $|\ol{\al}| \le |\al|$, we get: for $0 \le s \le t \le \rho_1$, 
\begin{align*}
[L(t)]_{\ol{I}(0)} - [L(s)]_{\ol{I}(0)} &= |\al|\left[[R]^{-1}_{I(0)}[e_i]_{I(0)}\right]_{\ol{I}(0)}(t-s) \ge |\al|\left[[R]_{I(0)}^{-1}\right]_{\ol{I}(0)}\left[[e_i]_{I(0)}\right]_{\ol{I}(0)}(t-s) \\ & \ge |\ol{\al}|[R]^{-1}_{\ol{I}(0)}[e_i]_{\ol{I}(0)}(t-s) \ge |\ol{\al}|[\ol{R}]^{-1}_{\ol{I}(0)}[e_i]_{\ol{I}(0)}(t-s) = [\ol{L}(t)]_{\ol{I}(0)} - [\ol{L}(s)]_{\ol{I}(0)}.
\end{align*}
In other words, for $j \in \ol{I}(0)$, 
$$
L_j(t) - L_j(s) \ge \ol{L}_j(t) - \ol{L}_j(s),\ \ 0 \le s \le t \le \rho_1.
$$

\medskip

{\it Case 5.2.} $j \in I(0)\setminus \ol{I}(0)$. Then $Z_j(t) = 0 \le \ol{Z}_j(t)$. Now, $L_j$ is always nondecreasing, and $\ol{Z}_j > 0$, so $\ol{L}_j \equiv 0$. Thus, 
$$
L_j(t) - L_j(s) \ge 0 = \ol{L}_j(t) - \ol{L}_j(s),\ 0 \le s \le t \le \rho_1.
$$

\medskip

{\it Case 5.3.} $j \notin \ol{I}(0)$. Then $j \notin I(0)$. Let 
$$
I^c(0) := \{1, \ldots, d\}\setminus I(0),\ \ \ol{I}^c(0) := \{1, \ldots, d\}\setminus \ol{I}(0).
$$
The components of $Z$ and $\ol{Z}$ corresponding to the sets $I^c(0), \ol{I}^c(0)$ of indices, respectively, have the following dynamics:
$$
\begin{cases}
[Z(t)]_{I^c(0)} = [Z(0)]_{I^c(0)} + |\al|[R]_{I^c(0)I(0)}[R]^{-1}_{I(0)}[e_i]_{I(0)}t,\\
[\ol{Z}(t)]_{\ol{I}^c(0)} = [\ol{Z}(0)]_{\ol{I}^c(0)} + |\ol{\al}|[\ol{R}]_{\ol{I}^c(0)\ol{I}(0)}[\ol{R}]^{-1}_{\ol{I}(0)}[e_i]_{\ol{I}(0)}t.
\end{cases}
$$
Since $R$ and $\ol{R}$ are reflection nonsingular $\CM$-matrices, and $R \le \ol{R}$, we have: 
\begin{equation}
\label{712}
r_{ij} \le \ol{r}_{ij} \le 0,\ i \ne j.
\end{equation}
In particular, this is true for $i \in I^c(0),\ j \in I(0)$, as well as for $i \in \ol{I}^c(0),\ j \in \ol{I}(0)$. But $I(0) \supseteq \ol{I}(0)$, and so $I^c(0) \subseteq \ol{I}^c(0)$. Therefore, 
\begin{align*}
[\ol{Z}(t)]_{I^c(0)} &= [\ol{Z}(0)]_{I^c(0)} + |\ol{\al}|t\left[[\ol{R}]_{\ol{I}^c(0)\ol{I}(0)}[\ol{R}]^{-1}_{\ol{I}(0)}[e_i]_{\ol{I}(0)}\right]_{I^c(0)} \\ & = [\ol{Z}(0)]_{I^c(0)} - |\ol{\al}|t\left[[-\ol{R}]_{\ol{I}^c(0)\ol{I}(0)}[\ol{R}]^{-1}_{\ol{I}(0)}[e_i]_{\ol{I}(0)}\right]_{I^c(0)}.
\end{align*}
It follows from~\eqref{712} that 
\begin{equation}
\label{45121}
0 \le [-\ol{R}]_{\ol{I}^c(0)\ol{I}(0)} \le [-R]_{\ol{I}^c(0)\ol{I}(0)}.
\end{equation}
Also, $[\ol{R}]^{-1}_{\ol{I}(0)} \ge 0$, $[e_i]_{\ol{I}(0)} \ge 0$. By Lemma~\ref{P4}, 
\begin{equation}
\label{45122}
\left[[-\ol{R}]_{\ol{I}^c(0)\ol{I}(0)}[\ol{R}]^{-1}_{\ol{I}(0)}[e_i]_{\ol{I}(0)}\right]_{I^c(0)} =
[-\ol{R}]_{I^c(0)\ol{I}(0)}[\ol{R}]^{-1}_{\ol{I}(0)}[e_i]_{\ol{I}(0)}.
\end{equation}
By Lemma~\ref{P6} and inequalities~\eqref{45121} and~\eqref{45122},
\begin{equation}
\label{45123}
[-\ol{R}]_{I^c(0)\ol{I}(0)}[\ol{R}]_{\ol{I}(0)}^{-1}[e_i]_{\ol{I}(0)} \le 
[-R]_{I^c(0)\ol{I}(0)}[R]^{-1}_{\ol{I}(0)}[e_i]_{\ol{I}(0)}.
\end{equation}
Since $\ol{I}(0) \subseteq I(0)$, applying Lemma~\ref{P1}, we get:
\begin{equation}
\label{45124}
0 \le [R]^{-1}_{\ol{I}(0)} \le \left[[R]^{-1}_{I(0)}\right]_{\ol{I}(0)},
\end{equation}
Therefore, applying Lemma~\ref{P6} again, and using that $[e_i]_{\ol{I}(0)} = \left[[e_i]_{I(0)}\right]_{\ol{I}(0)}$, we have:
\begin{equation}
\label{45125}
[-R]_{I^c(0)\ol{I}(0)}[R]^{-1}_{\ol{I}(0)}[e_i]_{\ol{I}(0)} 
\le [-R]_{I^c(0)\ol{I}(0)}\left[[R]^{-1}_{I(0)}\right]_{\ol{I}(0)}\left[[e_i]_{I(0)}\right]_{\ol{I}(0)}.
\end{equation}
By Lemma~\ref{P2} (applied twice)
\begin{equation}
\label{45126}
[-R]_{I^c(0)\ol{I}(0)}\left[[R]^{-1}_{I(0)}\right]_{\ol{I}(0)}\left[[e_i]_{I(0)}\right]_{\ol{I}(0)} \le 
[-R]_{I^c(0)I(0)}[R]_{I(0)}^{-1}[e_i]_{I(0)}.
\end{equation}
Combining~\eqref{45123}, ~\eqref{45124}, ~\eqref{45125} and~\eqref{45126}, we get:
$$
0 \le \left[[-\ol{R}]_{\ol{I}^c(0)\ol{I}(0)}[\ol{R}]^{-1}_{\ol{I}(0)}[e_i]_{\ol{I}(0)}\right]_{I^c(0)} \le [-R]_{I^c(0)I(0)}[R]^{-1}_{I(0)}[e_i]_{I(0)}.
$$
But we also have: $|\al| \ge |\ol{\al}| \ge 0$. So 
$$
0 \le \left[[-\ol{R}]_{\ol{I}^c(0)\ol{I}(0)}[\ol{R}]^{-1}_{\ol{I}(0)}[e_i]_{\ol{I}(0)}\right]_{I^c(0)}|\ol{\al}|t \le [-R]_{I^c(0)I(0)}[R]^{-1}_{I(0)}[e_i]_{I(0)}|\al|t.
$$
Finally, we get: 
\begin{align*}
[\ol{Z}(t)]_{I^c(0)} &\ge [\ol{Z}(0)]_{I^c(0)} - |\al|t[-R]_{I^c(0)I(0)}[R]^{-1}_{I(0)}[e_i]_{I(0)} \\ & \ge [Z(0)]_{I^c(0)} + |\al| t [R]_{I^c(0)I(0)}[R]^{-1}_{I(0)}[e_i]_I = [Z(t)]_{I^c(0)}.
\end{align*}
So for $j \in I^c(0)$ we get: 
$$
0 \le Z_j(t) \le \ol{Z}_j(t),\ \ t \in [0, \rho_1].
$$
Finally, since $Z_j(t) > 0$ and $\ol{Z}_j(t) > 0$ for $t \in [0, \rho_1)$, we have: $L_j = \ol{L}_j = 0$ on this interval, and trivially
$$
L_j(t) - L_j(s) \ge \ol{L}_j(t) - \ol{L}_j(s),\ 0 \le s \le t \le \rho_1.
$$
The proof is complete. 

\subsection{Proof of Theorem~\ref{main2}.} As in the previous subsection, it suffices to prove the theorem for coupled regular linear driving functions
$$
X(t) = x + \al e_i t,\ \ \ol{X}(t) = \ol{x} + \ol{\al} e_it,
$$
which satisfy the conditions of Theorem~\ref{main2}. This means that $x \le \ol{x}$, and $\al \le \ol{\al}$. Assume the converse: there exist $t \in [0, T]$ and $j = 1, \ldots, N$ such that $Y_j(t) > \ol{Y}_j(t)$. Since $Y_k(0) \le \ol{Y}_k(0)$, $k = 1, \ldots, N$, we can let 
$$
\tau_0 := \inf\{t \ge 0\mid \exists j = 1, \ldots, N: Y_j(t) > \ol{Y}_j(t)\}.
$$
In other words, 
$$
Y_k(\tau_0) \le \ol{Y}_k(\tau_0),\ k = 1, \ldots, N,
$$
but there exists $j = 1, \ldots, N$ such that for every $\eps > 0$ there exists $t \in (\tau_0, \tau_0 + \eps)$ such that $Y_j(t) > \ol{Y}_j(t)$. W.l.o.g. by memoryless property, assume $\tau_0 = 0$. Then $Y_j(0) = \ol{Y}_j(0)$. Recall that $I(t) := \{k = i, \ldots, N\mid Y_k(t) = Y_i(t)\}$, and $\tau_1 := \inf\{t \ge 0\mid I(t) \ne I(0)\}\wedge T$. Define $\ol{I}(t)$ and $\ol{\tau}_1$ similarly for $\ol{Y}$ in place of $Y$. Let $\eps := \tau_1\wedge\ol{\tau}_1$. 

\medskip

{\it Case 1.} $\al \le 0 \le \ol{\al}$. Then $Y_j$ are nonincreasing (follows from Lemma~\ref{explicitCP} and Lemma~\ref{inversionCP}), $\ol{Y}_j$
are nondecreasing, and the statement is trivial.

\medskip

{\it Case 2.} $\al \le \ol{\al} \le 0$. This can be reduced to Case 3 by Lemma~\ref{inversionCP}. 

\medskip

{\it Case 3.} $0 \le \al \le \ol{\al}$. If $j < i$, then particles $Y_j$ and $\ol{Y}_j$ lie below $Y_i(0)$ and $\ol{Y}_i(0)$ respectively. Therefore, they are not hit by the $i$th ranked particles in corresponding systems (which drift upward). In other words, they stay constant: $Y_j(t) = \ol{Y}_j(t) = \const$ on $[0, \eps]$. Now, assume $j \ge i$. Suppose $j \notin I(0)$, that is, $Y_j(0) > Y_i(0)$. Then, again, the particle $Y_j$ is unaffected by $Y_i$ moving upward, at least not until $Y_i$ hits $Y_j$, that is, not until $\tau_1\wedge\ol{\tau}_1$. But the particle $\ol{Y}_j$ is nondecreasing, according to Lemma~\ref{explicitCP}, so $\ol{Y}_j(t) \ge Y_j(t)$ on $[0, \eps)$. Therefore, we are left with the case $j \in I(0)$. Equivalently, $Y_j(0) = Y_i(0)$. And $Y_j(0) = \ol{Y}_j(0) \ge \ol{Y}_i(0)$, so $Y_i(0) \ge \ol{Y}_i(0)$. But by the conditions of the theorem, $Y_i(0) \le \ol{Y}_i(0)$, so  $Y_i(0) = \ol{Y}_i(0)$. Thus, 
$$
Y_i(0) = \ol{Y}_i(0) = Y_j(0) = \ol{Y}_j(0),
$$
and $j \in I(0)\cap \ol{I}(0)$. However, $I(0) \supseteq \ol{I}(0)$, because if $k \in \ol{I}(0)$, then $k \ge i$ and 
$$
Y_k(0) \le \ol{Y}_k(0) = \ol{Y}_i(0) = Y_i(0) \le Y_k(0),
$$
so $Y_k(0) = Y_i(0)$, and $k \in I(0)$. Let $\ol{I}(0) = \{i, \ldots, \ol{k}_0\}$, and $I(0) = \{i, \ldots, k_0\}$. From $\ol{I}(0) \subseteq I(0)$ it follows that $\ol{k}_0 \le k_0$. Therefore, for $t \in [0, \eps]$ we have: 
$$
Y_i(t) \equiv Y_j(t) = Y_i(0) + \al t\left[1 + \frac{q^-_i}{q^+_{i+1}} + \frac{q^-_iq^-_{i+1}}{q^+_{i+1}q^+_{i+2}} + \ldots + \frac{q^-_iq^-_{i+1}\ldots q^-_{k_0 - 1}}{q^+_{i+1}q^+_{i+2}\ldots q^+_{k_0}}\right]^{-1},
$$
$$
\ol{Y}_i(t) \equiv \ol{Y}_j(t) = \ol{Y}_i(0) + \ol{\al} t\left[1 + \frac{\ol{q}^-_i}{\ol{q}^+_{i+1}} + \frac{\ol{q}^-_i\ol{q}^-_{i+1}}{\ol{q}^+_{i+1}\ol{q}^+_{i+2}} + \ldots + \frac{\ol{q}^-_i\ol{q}^-_{i+1}\ldots \ol{q}^-_{\ol{k}_0 - 1}}{\ol{q}^+_{i+1}\ol{q}^+_{i+2}\ldots \ol{q}^+_{k_0}}\right]^{-1}.
$$
But 
$$
\ol{q}^+_k \ge q^+_k,\ \ \ol{q}^-_k \le q^-_k,\ k = 1, \ldots, N;\ \ol{k}_0 \le k_0, 
$$
so 
$$
1 + \frac{q^-_i}{q^+_{i+1}} + \frac{q^-_iq^-_{i+1}}{q^+_{i+1}q^+_{i+2}} + \ldots + \frac{q^-_iq^-_{i+1}\ldots q^-_{k_0 - 1}}{q^+_{i+1}q^+_{i+2}\ldots q^+_{k_0}} \ge 
1 + \frac{\ol{q}^-_i}{\ol{q}^+_{i+1}} + \frac{\ol{q}^-_i\ol{q}^-_{i+1}}{\ol{q}^+_{i+1}\ol{q}^+_{i+2}} + \ldots + \frac{\ol{q}^-_i\ol{q}^-_{i+1}\ldots \ol{q}^-_{\ol{k}_0 - 1}}{\ol{q}^+_{i+1}\ol{q}^+_{i+2}\ldots \ol{q}^+_{k_0}}.
$$
In addition, $\al \le \ol{\al}$, and $Y_i(0) = \ol{Y}_i(0)$. Therefore, we have: 
$Y_i(t) \le \ol{Y}_i(t)$ for $t \in [0, \eps]$, which contradicts our assumption. This completes the proof of Theorem~\ref{main2}. 

\section{Appendix}

\begin{lemma}\label{P1} Take a $d\times d$-reflection nonsingular $\CM$-matrix $R$ and fix a nonempty subset $J \subseteq \{1, \ldots, d\}$. Then 
$$
0 \le [R]_J^{-1} \le [R^{-1}]_J.
$$
\end{lemma}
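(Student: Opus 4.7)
My plan is to exploit the Neumann (geometric) series representation of the inverse of a reflection nonsingular $\mathcal{M}$-matrix and then compare series term by term.

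First I would write $R = I_d - Q$ with $Q \ge 0$, $Q$ having zeros on the diagonal and spectral radius $\rho(Q) < 1$. Then
$$
R^{-1} = \sum_{k=0}^{\infty} Q^k,
$$
the series converging because $\rho(Q) < 1$. Since every term $Q^k \ge 0$, we get $R^{-1} \ge 0$, hence $[R^{-1}]_J \ge 0$. Next I would observe that $[R]_J = I_{|J|} - [Q]_J$, where $[Q]_J \ge 0$ has zeros on the diagonal and, by the standard Perron--Frobenius fact that a principal submatrix of a nonnegative matrix has spectral radius no larger than the full matrix, satisfies $\rho([Q]_J) \le \rho(Q) < 1$. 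Thus $[R]_J$ is itself a reflection nonsingular $\mathcal{M}$-matrix, and
$$
[R]_J^{-1} = \sum_{k=0}^{\infty} [Q]_J^k \ge 0,
$$
which yields the left inequality in the claim.

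For the right inequality, it suffices to prove the componentwise bound
$$
[Q]_J^k \le [Q^k]_J, \qquad k \ge 0,
$$
and then sum over $k$. This is the combinatorial heart of the argument. Write $J = \{j_1 < \ldots < j_m\}$. For $a, b \in \{1, \ldots, m\}$,
$$
\bigl([Q]_J^k\bigr)_{ab} = \sum_{s_1, \ldots, s_{k-1} \in \{1, \ldots, m\}} Q_{j_a, j_{s_1}} Q_{j_{s_1}, j_{s_2}} \cdots Q_{j_{s_{k-1}}, j_b},
$$
whereas
$$
\bigl([Q^k]_J\bigr)_{ab} = (Q^k)_{j_a, j_b} = \sum_{i_1, \ldots, i_{k-1} \in \{1, \ldots, d\}} Q_{j_a, i_1} Q_{i_1, i_2} \cdots Q_{i_{k-1}, j_b}.
$$
The first sum runs over intermediate indices restricted to $J$; the second runs over all intermediate indices in $\{1, \ldots, d\}$. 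Since every summand is nonnegative ($Q \ge 0$), the first sum is bounded above by the second. Summing the inequality $[Q]_J^k \le [Q^k]_J$ over $k \ge 0$ gives $[R]_J^{-1} \le [R^{-1}]_J$, completing the proof.

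I expect no serious obstacle: the only non-immediate input is the spectral radius monotonicity under taking principal submatrices, which is a well-known consequence of Perron--Frobenius and can be cited; everything else reduces to the elementary path-counting comparison above.
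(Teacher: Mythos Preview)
Your proof is correct and follows essentially the same approach as the paper: write $R = I_d - Q$, expand both inverses as Neumann series, and compare term by term via $[Q]_J^k \le [Q^k]_J$. The only cosmetic differences are that the paper invokes its Lemma~\ref{P5} (which uses the completely-$\mathcal{S}$ characterization) rather than Perron--Frobenius to see that $[R]_J$ is again a reflection nonsingular $\mathcal{M}$-matrix, and it establishes $[Q]_J^k \le [Q^k]_J$ by induction on $k$ via Lemma~\ref{P2} rather than by your direct path-expansion argument.
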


\begin{proof} Since $R = I_d - Q$, where $Q \ge 0$ is a $d\times d$-matrix with spectral radius strictly less than one, we can apply the Neumann series:
\begin{equation}
\label{611}
R^{-1} = I_d + Q + Q^2 + \ldots
\end{equation}
By Lemma~\ref{P5}, $[R]_J = I_{|J|} - [Q]_J$ is also a reflection nonsingular $\CM$-matrix, so we have:
$$
[R]_J^{-1} = I_{|J|} + [Q]_J + [Q]_J^2 + \ldots
$$
But from~\eqref{611} we get:
$$
[R^{-1}]_J = I_{|J|} + [Q]_J + [Q^2]_J + \ldots
$$
That $[Q^k]_J \ge [Q]_J^k$ for $k = 1, 2, 3, \ldots$ can be proved by induction using Lemma~\ref{P2}. The rest is trivial.
\end{proof}

\begin{lemma}\label{P2} Take nonnegative matrices $A$ ($m\times d$) and $B$ ($d\times n$), and let $I \subseteq \{1, \ldots, m\}$, $J \subseteq \{1, \ldots, d\}$, $K \subseteq \{1, \ldots, n\}$ be nonempty subsets. Then 
$$
[A]_{IJ}[B]_{JK} \le [AB]_{IK}.
$$
\end{lemma}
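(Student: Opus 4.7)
The strategy is to verify the inequality componentwise, by showing that for each fixed pair of indices, the $(i,k)$-entry of $[AB]_{IK}$ exceeds the corresponding entry of $[A]_{IJ}[B]_{JK}$ by a sum of nonnegative terms. No algebraic manipulation beyond the definitions of restriction and matrix multiplication is needed.

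Fix $i \in I$ and $k \in K$. Unpacking the restriction $[AB]_{IK}$ and applying the definition of matrix multiplication,
\[
\bigl([AB]_{IK}\bigr)_{ik} = (AB)_{ik} = \SL_{j=1}^{d} a_{ij} b_{jk}.
\]
On the other hand, since $[A]_{IJ}$ has columns indexed by $J$ and $[B]_{JK}$ has rows indexed by $J$, the product $[A]_{IJ}[B]_{JK}$ has $(i,k)$-entry
\[
\bigl([A]_{IJ}[B]_{JK}\bigr)_{ik} = \SL_{j \in J} a_{ij} b_{jk},
\]
where the inner sum now runs only over $j \in J$ rather than over all $j = 1, \ldots, d$.

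Subtracting the two expressions, the difference equals $\SL_{j \in J^c} a_{ij} b_{jk}$, which is a sum of nonnegative terms because $A \ge 0$ and $B \ge 0$. Since $i \in I$ and $k \in K$ were arbitrary, this establishes $[A]_{IJ}[B]_{JK} \le [AB]_{IK}$ componentwise, as claimed. There is no real obstacle in this argument; the only care needed is in the bookkeeping for the matrix restriction operator, whose indexing convention was fixed in Section 2.
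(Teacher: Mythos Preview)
Your proof is correct and essentially identical to the paper's: both fix $i \in I$, $k \in K$, expand the $(i,k)$-entry of each side as $\SL_{j \in J} a_{ij}b_{jk}$ versus $\SL_{j=1}^d a_{ij}b_{jk}$, and observe that the difference is a sum of nonnegative terms.
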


\begin{proof} Let $A = (a_{ij})$ and $B = (b_{ij})$. Then for $i \in I$ and $k \in K$, 
$$
\left([A]_{IJ}[B]_{JK}\right)_{ik} = \SL_{j \in J}a_{ij}b_{jk} \le \SL_{i=1}^da_{ij}b_{jk} = (AB)_{ik} = \left([AB]_{IK}\right)_{ik}. 
$$
\end{proof}

\begin{lemma}\label{P4} Take a $d\times n$-matrix $A$ and a vector $a \in \BR^n$. Let $I \subseteq \{1, \ldots, d\}$ be a nonempty subset. Then $[Aa]_I = [A]_{I\times\{1, \ldots, n\}}a$. 
\end{lemma}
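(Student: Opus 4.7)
The plan is simply to unravel both sides componentwise. Enumerate the index set as $I = \{i_1, \ldots, i_m\}$ with $i_1 < i_2 < \ldots < i_m$, so that by definition $[Aa]_I$ is the $m$-vector whose $k$-th entry equals $(Aa)_{i_k} = \SL_{j=1}^n a_{i_kj} a_j$, where $A = (a_{ij})$. On the other hand, $[A]_{I \times \{1, \ldots, n\}}$ is the $m \times n$ submatrix of $A$ obtained by keeping all columns but only the rows indexed by $I$, so its $(k,j)$-entry is $a_{i_kj}$. The standard definition of matrix-vector multiplication then yields
$$
\bigl([A]_{I \times \{1, \ldots, n\}}\, a\bigr)_k = \SL_{j=1}^n a_{i_kj} a_j,
$$
which matches $(Aa)_{i_k}$ entrywise. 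Since the two vectors have the same $k$-th component for each $k = 1, \ldots, m$, they are equal.

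The statement is essentially a bookkeeping tautology: restricting the output of a matrix-vector product to a subset of rows is identical to first restricting the matrix to those rows and then multiplying. There is no real obstacle; the only thing to be careful about is that the subset $I$ acts on the row index of $A$ (hence the notation $I \times \{1, \ldots, n\}$, with the column index running over its full range), because $a \in \BR^n$ must be paired with all $n$ columns.
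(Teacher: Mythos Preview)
Your proof is correct; the paper itself simply states ``The proof is trivial'' without further elaboration, so your componentwise verification is exactly the unwinding the paper declines to write out.
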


The proof is trivial.

\begin{lemma}
\label{P7}
Take a $d\times d$-nonnegative matrix $A$ and a nonnegative vector $a \in \BR^d$. Let $J \subseteq \{1, \ldots, d\}$ be a nonempty subset. Then $[Aa]_J \ge [A]_{J}[a]_J$. 
\end{lemma}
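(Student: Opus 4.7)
The plan is to verify the inequality componentwise by explicitly expanding the matrix-vector product and discarding the contributions from indices outside $J$, using that everything in sight is nonnegative.

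Write $A = (a_{ij})_{1 \le i,j \le d}$ and $a = (a_1, \ldots, a_d)'$. For any index $i \in J$, the $i$-th component of $Aa$ is
\begin{equation*}
(Aa)_i \;=\; \sum_{j=1}^d a_{ij} a_j \;=\; \sum_{j \in J} a_{ij} a_j \;+\; \sum_{j \notin J} a_{ij} a_j.
\end{equation*}
The first sum on the right is precisely the $i$-th component of $[A]_J [a]_J$, since $[A]_J$ restricts both rows and columns of $A$ to indices in $J$, while $[a]_J$ restricts $a$ to indices in $J$. The second sum is a sum of products of nonnegative numbers (because $a_{ij} \ge 0$ and $a_j \ge 0$ by hypothesis), and is therefore nonnegative.

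Consequently, for each $i \in J$,
\begin{equation*}
([Aa]_J)_i \;=\; (Aa)_i \;=\; ([A]_J [a]_J)_i \;+\; \sum_{j \notin J} a_{ij} a_j \;\ge\; ([A]_J [a]_J)_i,
\end{equation*}
which is exactly the componentwise inequality $[Aa]_J \ge [A]_J [a]_J$. There is no real obstacle here: the content of the lemma is simply that throwing away some nonnegative contributions (those indexed by $j \notin J$) can only decrease the value of the sum, so the restricted product $[A]_J [a]_J$ is dominated by the restriction of the full product $Aa$.
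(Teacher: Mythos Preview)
Your proof is correct and is precisely the componentwise verification that the paper has in mind when it declares the proof trivial. There is nothing to add.
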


The proof is trivial. 

\begin{lemma} \label{P3} Let $R \le \ol{R}$ be two $d\times d$-reflection nonsingular $\CM$-matrices. Then $R^{-1} \ge \ol{R}^{-1} \ge 0$. 
\end{lemma}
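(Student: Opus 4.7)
The plan is to reduce everything to the Neumann series representation already used in the proof of Lemma~\ref{P1}. Writing $R = I_d - Q$ and $\ol{R} = I_d - \ol{Q}$ with $Q, \ol{Q} \ge 0$ of spectral radius strictly less than $1$ and with zero diagonals, the hypothesis $R \le \ol{R}$ translates directly into $Q \ge \ol{Q} \ge 0$. Then one has the absolutely convergent expansions
\begin{equation*}
R^{-1} = \SL_{k=0}^{\infty} Q^k, \qquad \ol{R}^{-1} = \SL_{k=0}^{\infty} \ol{Q}^k,
\end{equation*}
so it suffices to prove termwise that $Q^k \ge \ol{Q}^k \ge 0$ for every $k \ge 0$.

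The termwise comparison is an easy induction. For $k=0,1$ it is trivial. Assuming $Q^k \ge \ol{Q}^k \ge 0$, I would write
\begin{equation*}
Q^{k+1} - \ol{Q}^{k+1} = Q(Q^k - \ol{Q}^k) + (Q - \ol{Q})\ol{Q}^k,
\end{equation*}
and observe that both summands are products of nonnegative matrices, hence nonnegative. This closes the induction and gives $Q^k \ge \ol{Q}^k \ge 0$ for all $k$.

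Summing termwise, $R^{-1} - \ol{R}^{-1} = \sum_{k \ge 0}(Q^k - \ol{Q}^k) \ge 0$, and likewise $\ol{R}^{-1} = \sum_{k \ge 0}\ol{Q}^k \ge 0$, which is the desired conclusion $R^{-1} \ge \ol{R}^{-1} \ge 0$. There is no real obstacle here; the only thing to be slightly careful about is the algebraic identity used in the induction step, which must respect the noncommutativity of matrix multiplication—this is exactly why the decomposition $Q(Q^k - \ol{Q}^k) + (Q-\ol{Q})\ol{Q}^k$ (rather than $(Q-\ol{Q})Q^k + \ol{Q}(Q^k - \ol{Q}^k)$, which works equally well) is used, so that only nonnegative factors multiply nonnegative factors.
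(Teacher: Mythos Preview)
Your proof is correct and follows essentially the same Neumann-series approach as the paper: write $R=I_d-Q$, $\ol R=I_d-\ol Q$, deduce $Q\ge\ol Q\ge0$ from $R\le\ol R$, and compare the series termwise. You supply a careful inductive justification of $Q^k\ge\ol Q^k$ where the paper simply asserts it (and in fact the paper's printed inequality $\ol Q\ge Q$ is a typo for $Q\ge\ol Q$, which you have correctly).
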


\begin{proof} Apply Neumann series again: if 
$$
R = I_d - Q,\ \ \ol{R} = I_d - \ol{Q},
$$
then $Q \ge \ol{Q} \ge 0$, and so $Q^k \ge \ol{Q}^k \ge 0$, $k = 1, 2, \ldots$. Thus,
$$
R^{-1} = I_d + Q + Q^2 + \ldots \ge I_d + \ol{Q} + \ol{Q}^2 + \ldots = \ol{R}^{-1}.
$$
\end{proof}

\begin{lemma}\label{P5} If $R$ is a $d\times d$-reflection nonsingular $\CM$-matrix and $I \subseteq \{1, \ldots, d\}$ is a nonempty subset, then $[R]_I$ is also a reflection nonsingular $\CM$-matrix. 
\end{lemma}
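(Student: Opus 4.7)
Write $R = I_d - Q$ as guaranteed by the definition of a reflection nonsingular $\CM$-matrix, so that $Q \ge 0$, $Q$ has zeros on the main diagonal, and the spectral radius $\rho(Q) < 1$. The plan is to verify directly that the analogous decomposition works for $[R]_I$, namely $[R]_I = I_{|I|} - [Q]_I$, and then to check the three required properties of $[Q]_I$.

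Since restriction of the identity to a principal block is again the identity (of the appropriate size) and since restriction to a common index set is linear, we immediately get $[R]_I = I_{|I|} - [Q]_I$. The nonnegativity $[Q]_I \ge 0$ and the vanishing of the diagonal of $[Q]_I$ are inherited directly from $Q$, essentially by inspection.

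The only nontrivial step is to show $\rho([Q]_I) < 1$; this is where I expect the real work to be, though the tools needed are already present in the appendix. The plan is to use the Neumann series characterization together with Lemma~\ref{P2}. By induction on $k$, Lemma~\ref{P2} gives $[Q]_I^k \le [Q^k]_I$ for every $k \ge 1$ (the base case $k = 1$ is trivial, and the inductive step writes $[Q]_I^{k+1} = [Q]_I [Q]_I^k \le [Q]_I [Q^k]_I \le [Q \cdot Q^k]_I = [Q^{k+1}]_I$, where the last inequality is Lemma~\ref{P2} applied to $A = Q$, $B = Q^k$). Since $\rho(Q) < 1$, the Neumann series $\sum_{k \ge 0} Q^k$ converges entrywise, hence so does $\sum_{k \ge 0} [Q^k]_I$, and therefore by the componentwise bound just established the nonnegative series $\sum_{k \ge 0} [Q]_I^k$ also converges entrywise. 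For a nonnegative matrix this is equivalent to the spectral radius being strictly less than $1$ (the $(i,i)$ entry of $[Q]_I^k$ dominates $\rho([Q]_I)^k$ up to a constant via Perron--Frobenius, or more elementarily, convergence of $\sum [Q]_I^k$ forces $[Q]_I^k \to 0$, which in turn forces $\rho([Q]_I) < 1$).

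Combining all three points, $[R]_I = I_{|I|} - [Q]_I$ with $[Q]_I$ nonnegative, zero-diagonal, and of spectral radius less than $1$, so $[R]_I$ is a reflection nonsingular $\CM$-matrix as required. The main obstacle is the spectral radius estimate, and the mechanism that defeats it is simply Lemma~\ref{P2} plus the Neumann series formula for $(I_d - Q)^{-1}$.
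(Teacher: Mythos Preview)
Your proof is correct, but it takes a different route from the paper's. The paper invokes the equivalent characterization from \cite[Lemma 2.1]{MyOwn3}: a matrix with unit diagonal and nonpositive off-diagonal entries is a reflection nonsingular $\CM$-matrix if and only if it is completely-$\CS$ (every principal submatrix admits a strictly positive vector mapped to a strictly positive vector). Since every principal submatrix of $[R]_I$ is already a principal submatrix of $R$, the completely-$\CS$ property passes down immediately, and the proof is a one-liner once that external characterization is in hand.

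Your argument instead works directly from the definition, reducing the question to the spectral radius bound $\rho([Q]_I) < 1$ and establishing it via the componentwise inequality $[Q]_I^k \le [Q^k]_I$ (Lemma~\ref{P2} by induction) together with convergence of the Neumann series for $(I_d - Q)^{-1}$. This is more self-contained: it avoids the external reference and uses only machinery already present in the Appendix. In fact the key inequality $[Q]_I^k \le [Q^k]_I$ is exactly what the paper later uses in the proof of Lemma~\ref{P1}, so your approach dovetails nicely with that argument. The paper's route is shorter if one is willing to import the completely-$\CS$ characterization; yours is longer but keeps everything internal to the paper.
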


\begin{proof} Use \cite[Lemma 2.1]{MyOwn3}. A $d\times d$-matrix $R = (r_{ij})$ is a reflection nonsingular $\CM$-matrix if and only if 
$$
r_{ii} = 1,\ i = 1, \ldots, d;\ \ r_{ij} \le 0,\ i \ne j,
$$
and, in addition, $R$ is completely-$\CS$, which means that for every principal submatrix $[R]_J$ of $R$ there exists a vector $u > 0$ such that $[R]_Ju > 0$. Now, switch from $R$ to $[R]_I$. The same conditions hold: $$
r_{ii} = 1,\ i \in I;\ \ r_{ij} \le 0,\ i \ne j,\ i, j \in I,
$$
and, in addition, for every principal submatrix $[[R]_I]_J = [R]_J$ of $[R]_I$, where $J \subseteq I$, there exists a vector $u > 0$ such that $[R]_Ju > 0$. This means that $[R]_I$ is also a reflection nonsingular $\CM$-matrix. 
\end{proof}

\begin{lemma} \label{P6} If $A \ge B \ge 0$ and $C \ge D \ge 0$ are matrices such that the matrix products $AC$ and $BD$ are well defined, then $AC \ge BD \ge 0$.
\end{lemma}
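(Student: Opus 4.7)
My plan is to prove the inequality entrywise. Write $A = (a_{ij})$, $B = (b_{ij})$ of the same shape (say $m \times n$), and $C = (c_{ij})$, $D = (d_{ij})$ of the same shape (say $n \times p$). The hypothesis means that $a_{ij} \ge b_{ij} \ge 0$ for all $i,j$ and $c_{ij} \ge d_{ij} \ge 0$ for all $i,j$. The claim $AC \ge BD \ge 0$ is simply the claim that $(AC)_{ij} \ge (BD)_{ij} \ge 0$ for each entry $(i,j)$.

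The cleanest way to present this is via the identity
\begin{equation*}
AC - BD = A(C-D) + (A-B)D.
\end{equation*}
Here $A \ge 0$ and $C - D \ge 0$, so the entries of $A(C-D)$ are sums of products of nonnegative numbers, hence nonnegative; the same reasoning applies to $(A-B)D$, since $A - B \ge 0$ and $D \ge 0$. Thus $AC - BD \ge 0$. Also $BD \ge 0$ directly, since both $B$ and $D$ are nonnegative. Combining, $AC \ge BD \ge 0$.

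There is no real obstacle here: the lemma is an elementary consequence of the fact that the product of two entrywise nonnegative matrices is entrywise nonnegative, combined with the additive decomposition above. The only thing to be a bit careful about is that the sizes of $A, B$ match, and the sizes of $C, D$ match, so that $A - B$ and $C - D$ make sense; this is implicit in the hypothesis since $A \ge B$ and $C \ge D$ presuppose equal shapes. The whole proof fits in two or three lines.
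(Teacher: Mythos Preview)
Your proof is correct; the paper omits the proof entirely (declaring it ``trivial''), and your decomposition $AC - BD = A(C-D) + (A-B)D$ is the standard way to fill in that triviality. There is nothing to add.
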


The proof is trivial.

\section*{Acknoweldgements}

I would like to thank \textsc{Ioannis Karatzas}, \textsc{Soumik Pal}, \textsc{Xinwei Feng}, \textsc{Amir Dembo} and \textsc{Vladas Sidoravicius} for help and useful discussion. This research was partially supported by  NSF grants DMS 1007563, DMS 1308340, DMS 1405210, and DMS 1409434.

\bibliographystyle{plain}

\bibliography{aggregated}

\begin{thebibliography}{10}

\bibitem{Ichiba11}
Adrian~D. Banner, E.~Robert Fernholz, Tomoyuki Ichiba, Ioannis Karatzas, and
  Vassilios Papathanakos.
\newblock Hybrid atlas models.
\newblock {\em Ann. Appl. Probab.}, 21(2):609--644, 2011.

\bibitem{BFK2005}
Adrian~D. Banner, E.~Robert Fernholz, and Ioannis Karatzas.
\newblock Atlas models of equity markets.
\newblock {\em Ann. Appl. Probab.}, 15(4):2296--2330, 2005.

\bibitem{BG2008}
Adrian~D. Banner and Raouf Ghomrasni.
\newblock Local times of ranked continuous semimartingales.
\newblock {\em Stochastic Process. Appl.}, 118(7):1244--1253, 2008.

\bibitem{CP2010}
Sourav Chatterjee and Soumik Pal.
\newblock A phase transition behavior for {B}rownian motions interacting
  through their ranks.
\newblock {\em Probab. Theory Related Fields}, 147(1-2):123--159, 2010.

\bibitem{4people}
Amir Dembo, Mykhaylo Shkolnikov, S.R.~Srinivasa Varadhan, and Ofer Zeitouni.
\newblock Large deviations for diffusions interacting through their ranks.
\newblock 2013.
\newblock Preprint. Available at arXiv:1211.5223.

\bibitem{DemboTsai}
Amir Dembo and Li-Cheng Tsai.
\newblock Equilibrium fluctuation of the atlas model.
\newblock 2015.
\newblock Preprint. Available at arXiv:1503.03581.

\bibitem{FIK2013}
E.~Robert Fernholz, Tomoyuki Ichiba, and Ioannis Karatzas.
\newblock Two {B}rownian particles with rank-based characteristics and
  skew-elastic collisions.
\newblock {\em Stochastic Process. Appl.}, 123(8):2999--3026, 2013.

\bibitem{Haddad2010}
Jean-Paul Haddad, Ravi~R. Mazumdar, and Francisco~J. Piera.
\newblock Pathwise comparison results for stochastic fluid networks.
\newblock {\em Queueing Syst.}, 66(2):155--168, 2010.

\bibitem{Har1973}
J.~Michael Harrison.
\newblock The heavy traffic approximation for single server queues in series.
\newblock {\em J. Appl. Probability}, 10:613--629, 1973.

\bibitem{Har1978}
J.~Michael Harrison.
\newblock The diffusion approximation for tandem queues in heavy traffic.
\newblock {\em Adv. in Appl. Probab.}, 10(4):886--905, 1978.

\bibitem{HR1981a}
J.~Michael Harrison and I.~Martin Reiman.
\newblock Reflected {B}rownian motion on an orthant.
\newblock {\em Ann. Probab.}, 9(2):302--308, 1981.

\bibitem{HW1987b}
J.~Michael Harrison and R.~J. Williams.
\newblock Brownian models of open queueing networks with homogeneous customer
  populations.
\newblock {\em Stochastics}, 22(2):77--115, 1987.

\bibitem{IchibaThesis}
Tomoyuki Ichiba.
\newblock {\em Topics in multi-dimensional diffusion theory: {A}ttainability,
  reflection, ergodicity and rankings}.
\newblock ProQuest LLC, Ann Arbor, MI, 2009.
\newblock Thesis (Ph.D.)--Columbia University.

\bibitem{IK2010}
Tomoyuki Ichiba and Ioannis Karatzas.
\newblock On collisions of {B}rownian particles.
\newblock {\em Ann. Appl. Probab.}, 20(3):951--977, 2010.

\bibitem{IKS2013}
Tomoyuki Ichiba, Ioannis Karatzas, and Mykhaylo Shkolnikov.
\newblock Strong solutions of stochastic equations with rank-based
  coefficients.
\newblock {\em Probab. Theory Related Fields}, 156(1-2):229--248, 2013.

\bibitem{IPS2012}
Tomoyuki Ichiba, Soumik Pal, and Mykhaylo Shkolnikov.
\newblock Convergence rates for rank-based models with applications to
  portfolio theory.
\newblock {\em Probability Theory and Related Fields}, pages 1--34, 2012.

\bibitem{Jourdain2000}
B.~Jourdain.
\newblock Diffusion processes associated with nonlinear evolution equations for
  signed measures.
\newblock {\em Methodol. Comput. Appl. Probab.}, 2(1):69--91, 2000.

\bibitem{JM2008}
Benjamin Jourdain and Florent Malrieu.
\newblock Propagation of chaos and poincaré inequalities for a system of
  particles interacting through their cdf.
\newblock {\em Ann. Appl. Probab.}, 18(5):1706--1736, 10 2008.

\bibitem{JR2013b}
Benjamin Jourdain and Julien Reygner.
\newblock Capital distribution and portfolio performance for in the mean-field
  atlas model.
\newblock 2013.
\newblock Preprint. Available at arXiv:1312.5660.

\bibitem{JR2013a}
Benjamin Jourdain and Julien Reygner.
\newblock Propagation of chaos for rank-based interacting diffusions and long
  time behaviour of a scalar quasilinear parabolic equation.
\newblock {\em Stochastic Partial Differential Equations: Analysis and
  Computations}, 1(3):455--506, 2013.

\bibitem{JR2014}
Benjamin Jourdain and Julien Reygner.
\newblock The small noise limit of order-based diffusion processes.
\newblock {\em Electron. J. Probab.}, 19, 2014.

\bibitem{KPS2012}
Ioannis Karatzas, Soumik Pal, and Mykhaylo Shkolnikov.
\newblock Systems of brownian particles with asymmetric collisions.
\newblock 2012.
\newblock Preprint. Available at arXiv:1210.0259v1.

\bibitem{MyOwn4}
Ioannis Karatzas and Andrey Sarantsev.
\newblock Diverse market models of competing brownian particles with splits and
  mergers.
\newblock 2014.
\newblock Preprint. Available at arXiv:1404.0748.

\bibitem{KR2012b}
Offer Kella and S.~Ramasubramanian.
\newblock Asymptotic irrelevance of initial conditions for {S}korohod
  reflection mapping on the nonnegative orthant.
\newblock {\em Math. Oper. Res.}, 37(2):301--312, 2012.

\bibitem{KW1996}
Offer Kella and Ward Whitt.
\newblock Stability and structural properties of stochastic storage networks.
\newblock {\em J. Appl. Probab.}, 33(4):1169--1180, 1996.

\bibitem{PP2008}
Soumik Pal and Jim Pitman.
\newblock One-dimensional {B}rownian particle systems with rank-dependent
  drifts.
\newblock {\em Ann. Appl. Probab.}, 18(6):2179--2207, 2008.

\bibitem{PS2010}
Soumik Pal and Mykhaylo Shkolnikov.
\newblock Concentration of measure for brownian particle systems interacting
  through their ranks.
\newblock {\em Ann. Appl. Probab.}, 24(4):1482--1508, 08 2014.

\bibitem{R2000}
S.~Ramasubramanian.
\newblock A subsidy-surplus model and the {S}korokhod problem in an orthant.
\newblock {\em Math. Oper. Res.}, 25(3):509--538, 2000.

\bibitem{Reygner2014}
Julien Reygner.
\newblock Chaoticity of the stationary distribution of rank-based interacting
  diffusions.
\newblock 2014.
\newblock Preprint. Available at arXiv:1408.4103.

\bibitem{MyOwn6}
Andrey Sarantsev.
\newblock Infinite systems of competing brownian particles.
\newblock 2015.
\newblock Preprint. Available at arXiv:1403.4229.

\bibitem{MyOwn5}
Andrey Sarantsev.
\newblock Multiple collisions in systems of competing brownian particles.
\newblock 2015.
\newblock Preprint. Available at arXiv:1309.2621.

\bibitem{MyOwn3}
Andrey Sarantsev.
\newblock Triple and simultaneous collisions of competing brownian particles.
\newblock {\em Electron. J. Probab.}, 20:no. 29, 1--28, 2015.

\bibitem{MyOwn11}
Andrey Sarantsev.
\newblock Two-sided infinite systems of competing brownian particles.
\newblock 2015.
\newblock Preprint. Available at arXiv:1509.01859.

\bibitem{S2011}
Mykhaylo Shkolnikov.
\newblock Competing particle systems evolving by interacting {L}\'evy
  processes.
\newblock {\em Ann. Appl. Probab.}, 21(5):1911--1932, 2011.

\bibitem{S2012}
Mykhaylo Shkolnikov.
\newblock Large systems of diffusions interacting through their ranks.
\newblock {\em Stochastic Process. Appl.}, 122(4):1730--1747, 2012.

\bibitem{Wil1995}
R.~J. Williams.
\newblock Semimartingale reflecting {B}rownian motions in the orthant.
\newblock In {\em Stochastic networks}, volume~71 of {\em IMA Vol. Math.
  Appl.}, pages 125--137. Springer, New York, 1995.

\end{thebibliography}

\end{document}